\newtheorem{defi}{Definition}[section]
\newtheorem{thm}{Theorem}[section]
\newtheorem{lem}{Lemma}[section]
\newtheorem{prop}{Proposition}[section]
\newcommand{\RR}{\mathbb{R}}      
\newcommand{\NN}{\mathbb{N}}
\newcommand{\PP}{\mathcal{P}}     
\newcommand{\RA}{\rightarrow}     
\newcommand{\td}{\nabla}   
\newcommand{\vp}{\varphi}
\numberwithin{equation}{section}
\begin{document}

\nocite{*}

\title{Radial weak solutions for the Perona-Malik equation as a differential inclusion}

\author{Seonghak Kim and Baisheng Yan\\[1ex] Department of Mathematics\\
Michigan State University\\
East Lansing, Michigan 48824 USA}

\maketitle

\begin{abstract}
  The Perona-Malik equation is an ill-posed forward-backward parabolic equation with major application in image processing. In this paper we study the Perona-Malik type equation and show that, in all dimensions,  there exist infinitely many radial weak solutions to  the homogeneous Neumann boundary problem for any smooth nonconstant radially symmetric initial data. Our approach is to reformulate the $n$-dimensional equation into a one-dimensional equation, to convert the one-dimensional problem into a differential inclusion problem, and to apply a Baire's category method to generate infinitely many solutions.
\end{abstract}

\tableofcontents


\section{Introduction}

In this paper we investigate the existence of weak solutions for an $n$-dimensional Perona-Malik type equation under the homogeneous Neumann boundary condition and radially symmetric initial data:
\begin{equation}\label{1-1}
\left\{ \begin{array}{ll}
  u_t=\mathrm{div}(a(|Du|^2)Du) & \textrm{in }\Omega_T:=\Omega\times(0,T) \\
  \frac{\partial u}{\partial \mathbf{n}}=0 & \textrm{on }\partial\Omega\times(0,T) \\
  u(x,0)=u_0(x) & \textrm{for } x\in\Omega,
\end{array} \right.
\end{equation}
where $\Omega:=B_R(0)$ is the open ball in $\RR^n$ ($n\ge 1$) with center $0$ and radius $R>0$,  $T>0$ is a given time,  $\mathbf{n}$ is outward unit normal to $\partial\Omega$, $u_0\colon \Omega\RA\RR$ is a radially symmetric initial function, and $a\in C^{2,\alpha}([0,\infty))$, for some $\alpha\in (0,1)$, is a positive function satisfying the following:
\begin{equation}\label{1-2}
  2p\,a'(p)+a(p)\left\{ \begin{array}{ll}
  >0 & \textrm{for } 0\leq p<1 \\
  =0 & \textrm{for } p=1 \\
  <0 & \textrm{for } p>1,
\end{array} \right.  \quad \mbox{and} \quad
  \lim_{p\RA\infty}\sigma(p)=0,
\end{equation}
where $\sigma(p)=a(p^2)p$ for $p\in \RR.$  We can relax the function $a$ in (\ref{1-2}) to $a\in C^{2,\alpha}([0,1))\cap C([0,\infty))$ with $\sigma$ strictly decreasing on $[1,\infty)$ without affecting the result of this paper. The notation and assumptions in this paragraph will be kept throughout the paper unless otherwise stated.
\begin{figure}[ht]
\begin{center}
\includegraphics[scale=0.6]{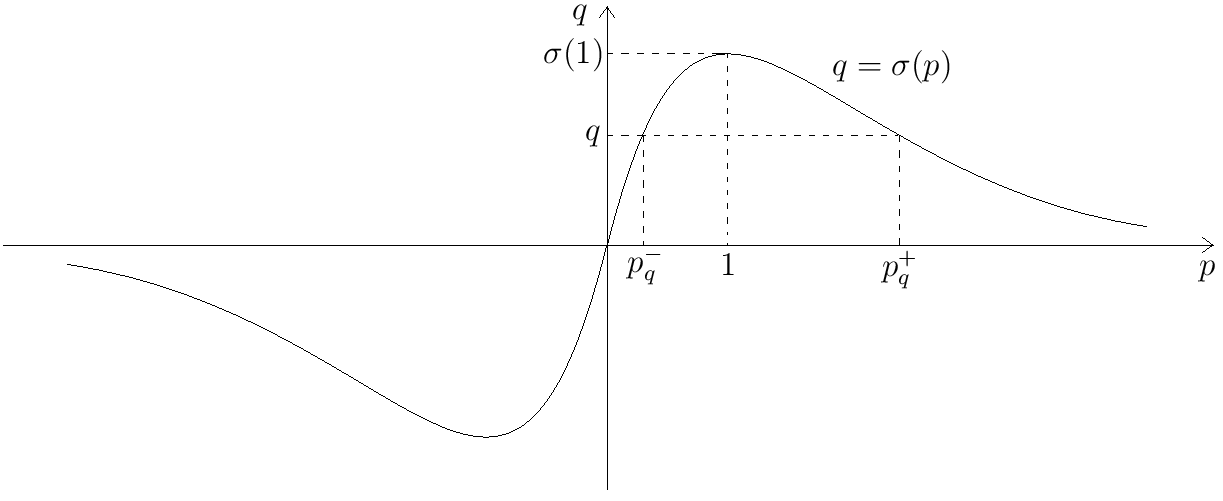}
\end{center}
\caption{The graph of a typical function $q=\sigma(p)$}
\label{fig1}
\end{figure}

In the original paper of \textsc{Perona \& Malik} \cite{perona90}, they proposed an anisotropic diffusion model (\ref{1-1}) for an edge enhancement of computer vision, where $\Omega\subset\RR^2$ is a square and $a(p)$ is given as
$$
\textrm{either}\,\,\,\,a(p)=\frac{1}{1+\frac{p}{k^2}}\,\,\,\,\textrm{or}\,\,\,\,a(p)=\mathrm{exp}\left(-\frac{p}{2k^2}\right),
$$
with the fixed threshold $k>0$  according to some experimental purposes. In our case we have chosen $k=1$ for simplicity, but this choice is not essential.

For the general discussion, let us assume for the moment that $\Omega\subset\RR^n$ is a bounded $C^1$ domain and that $a(p)=(1+p)^{-1}$. Given a point $x\in\bar{\Omega}$, we say that the initial condition $u_0\in C^1(\bar\Omega)$ is \emph{subcritical} at $x$ if $|Du_0(x)|<1$, \emph{supercritical} at $x$ if $|Du_0(x)|>1$, and \emph{critical} at $x$ if $|Du_0(x)|=1$. The initial condition $u_0$ is \emph{transcritical} in $\Omega$ if there are two points $x,y\in\Omega$ with $|Du_0(x)|<1$ and $|Du_0(y)|>1$.
Existence of   global or local classical solutions to Problem (\ref{1-1})  depends heavily on the initial condition $u_0$. \textsc{Kawohl \& Kutev} \cite{kawohl98} showed that a global classical solution exists in any dimension if $u_0$ is subcritical in $\bar{\Omega}$. They also proved that the problem  cannot admit a global classical solution for $n=1$ if $u_0$ is transcritical in $\Omega$ under some technical assumptions, and these assumptions were completely removed later by \textsc{Gobbino} \cite{gobbino07}. Concerning the Perona-Malik type equation, it  had been the general belief  that  classical solutions can only exist if the initial data are smooth, even analytic, at supercritical points; this was formally streamlined in  \textsc{Kichenassamy} \cite{kichenassamy97}. As regards to the class of suitable initial conditions for classical solutions of (\ref{1-1}), \textsc{Ghisi \& Gobbino} \cite{ghisi09} has recently established that for $n=1$,  the set of initial conditions for which the problem (\ref{1-1}) has a local classical solution is dense in $C^1(\bar\Omega)$.

The situation concerning the existence of a global classical solution to (\ref{1-1}) with a transcritical initial condition for $n\ge 2$ turns out to be quite different from the case $n=1.$ The first existence result of global classical solutions with $u_0$ transcritical for $n\geq2$ was obtained by \textsc{Ghisi \& Gobbino} \cite{ghisi11}, where they constructed  a class of global radial $C^{2,1}$ solutions with suitably chosen radial initial data transcritical on an annulus centered at the origin; these solutions also have the property of finite-time extinction of supercritical region. In contrast  to the one-dimensional result of \cite{gobbino07, kawohl98} mentioned above, their result showed a quite different feature of the higher dimensional problem. On the other hand, in the radial case,  \textsc{Ghisi \& Gobbino}  \cite{ghisi11-1} also proved that  a global $C^1$ solution cannot exist if the gradient of initial condition $u_0$ is very large at a point. Therefore, requirement of  regularity of the solution (e.g., classical or $C^1$) would prevent the existence of such a solution if the initial data should be arbitrarily given and  transcritical.

When the initial condition $u_0$ is any  given smooth function (satisfying certain compatibility condition on $\partial\Omega$), it seems natural to lower the expectation on the regularity of solutions by finding plausible weak solutions to (\ref{1-1}). Even under   the lowering of regularity have  enormous  difficulties  occurred in the existence of weak solutions. Among many different approaches and attempts  in this direction, e.g., the $\Gamma$-limit method in \textsc{Bellettini \& Fusco} \cite{bellettini08}, the Young measure solutions in \textsc{Chen \& Zhang} \cite{chen06}, and numerical scheme analyses in \textsc{Esedoglu} \cite{esedoglu01} and \textsc{Esedoglu \& Greer} \cite{esedoglu09},  to our best knowledge, {\sc Zhang}  \cite{zhang06}  was the first to successfully prove  that,  for $n=1$, there are infinitely many  (Lipschitz) weak solutions to (\ref{1-1}) for any given smooth initial data $u_0$. His method uses the variational technique of  differential inclusion  together with the so called in-approximation method or convex integration; this new method can also deal with other ill-posed forward-backward diffusion problems (see, e.g.,  the pioneering work of \textsc{H\"{o}llig} \cite{hollig83} and its recent generalization by {\sc Zhang} \cite{zhang06-1}.)  In this paper, we primarily  generalize the result of {\sc Zhang} \cite{zhang06} to the case of radial weak solutions to problem (\ref{1-1}) in all dimensions.

\par

For $\alpha\in(0,1)$, we use $C^{3+\alpha,1+\alpha/2}(\bar{\Omega}_T)$ to denote the parabolic H\"{o}lder space of functions $u\in C^0(\bar{\Omega}_T)$ such that $u_t,u_{x_it}, u_{x_i}, u_{x_ix_j}, u_{x_ix_jx_k} \in C^0(\bar{\Omega}_T)$ and such that the quantities
$$
\sup_{x\in\Omega\atop{{s,t\in(0,T),s\neq t}}}\frac{|u_{x_it}(x,s)-u_{x_it}(x,t)|}{|s-t|^{\alpha/2}},\,\,\, \sup_{x\in\Omega\atop{{s,t\in(0,T),s\neq t}}}\frac{|u_{x_ix_jx_k}(x,s)-u_{x_ix_jx_k}(x,t)|}{|s-t|^{\alpha/2}},
$$
$$
\sup_{x,y,\in\Omega,x\neq y\atop{t\in (0,T)}}\frac{|u_{x_it}(x,t)-u_{x_it}(y,t)|}{|x-y|^{\alpha}},\,\,\, \sup_{x,y,\in\Omega,x\neq y\atop{t\in(0,T)}}\frac{|u_{x_ix_jx_k}(x,t)-u_{x_ix_jx_k}(y,t)|}{|x-y|^{\alpha}}
$$
are all finite, where $i,j,k\in\{1,\ldots,n\}$.

We state the main result of this paper as follows.

\begin{thm}\label{1-8}
Let  $u_0\in C^{3,\alpha}(\bar\Omega)$ be a radially symmetric function with \[
M:=\max_{\bar\Omega}|Du_0|>0\]
 such that the compatibility condition holds:
\begin{displaymath}
\frac{\partial u_0}{\partial \mathbf{n}}(x)=0 \,\,\,\,\,\,\forall x\in\partial\Omega.
\end{displaymath}
Then the forward-backward Neumann problem (\ref{1-1}) admits infinitely many radial weak solutions $u\in W^{1,\infty}(\Omega_T)$ satisfying the following:
\begin{itemize}
\item[(a)] For every $\xi\in C^1_0(\Omega_T)$,\\
\begin{eqnarray}\label{1-3}
\int_{\Omega_T}(u_t\xi+a(|Du|^2)Du\cdot D\xi)dxdt=0.
\end{eqnarray}
\item[(b)] The solutions $u$ are (uniformly and locally) classical near $\{\partial\Omega\cup\{0\}\}\times[0,T]$ in the sense that there exists a constant $\delta_0$ with $0<\delta_0<R/2$, independent of $u$, such that
\begin{equation}\label{1-4}
\left\{ \begin{array}{l}
  u\in C^{3+\alpha,1+\alpha/2}\left(\{\overline{B_{\delta_0}(0)}\cup \overline{(B_{R}(0)\setminus B_{R-\delta_0}(0))}\}\times[0,T]\right), \\
  u_t=\mathrm{div}(a(|Du|^2)Du)\,\,\textrm{pointwise in } \{B_{\delta_0}(0)\cup (B_{R}(0)\setminus \overline{B_{R-\delta_0}(0)})\}\times(0,T).
\end{array} \right.
\end{equation}
\item[(c)] The initial condition holds:
\begin{eqnarray}\label{1-6}
u(x,0)=u_0(x)\,\,\,\,\,\,\forall x\in\bar\Omega.
\end{eqnarray}
\item[(d)] The boundary condition is satisfied:
\begin{eqnarray}\label{1-7}
\frac{\partial u}{\partial \mathbf{n}}(x,t)=0\,\,\,\,\,\,\forall (x,t)\in\partial\Omega\times[0,T].
\end{eqnarray}
\item[(e)] The almost maximum principle holds when $u_0$ is critical or supercritical at some point in $\Omega$; that is, if $M\geq 1$, then, given any $\epsilon>0$, we can choose the solutions $u$ to satisfy the following:
\begin{eqnarray}\label{1-5}
||Du||_{L^\infty(\Omega_T;\RR^n)}\leq M+\epsilon.
\end{eqnarray}
\item[(f)] The conservation of mass:
\begin{eqnarray}\label{1-10}
\int_\Omega u(x,t)dx=\int_\Omega u_0(x)dx\,\,\,\,\,\,\forall t\in[0,T].
\end{eqnarray}
\end{itemize}
\end{thm}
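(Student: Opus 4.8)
The plan is to follow the three-stage strategy announced in the abstract: reduce the radial PDE to a one-dimensional nonlocal problem, recast that problem as a differential inclusion, and then run a Baire-category/convex-integration argument to produce infinitely many solutions, finally checking that properties (a)--(f) survive the reduction. First I would write a radial solution as $u(x,t)=v(r,t)$ with $r=|x|$, so that $\mathrm{div}(a(|Du|^2)Du)$ becomes $r^{1-n}\partial_r\bigl(r^{n-1}a(v_r^2)v_r\bigr)=r^{1-n}\partial_r\bigl(r^{n-1}\sigma(v_r)\bigr)$ where $\sigma(p)=a(p^2)p$ is the flux in Figure~\ref{fig1}. Introducing the primitive $w(r,t)=\int_0^r s^{n-1}v(s,t)\,ds$ (or a similar mass-type variable) turns the equation into $w_t=r^{n-1}\sigma\bigl(r^{1-n}w_r\bigr)$-type form; the point of this change of unknown is to make $v_t$ an honest derivative $\partial_r$ of something, so that a weak solution of the 1-D problem integrates back to a genuine distributional solution of \eqref{1-1} tested against radial $\xi$, and then against all $\xi\in C^1_0(\Omega_T)$ by density of radial test functions in the radial sector. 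The compatibility condition and the radial smoothness of $u_0$ give boundary data at $r=R$ and a regular (non-singular) behavior at $r=0$.

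Next I would set up the differential inclusion. Writing the system for the pair $(v, \text{auxiliary flux } z)$ in the $(r,t)$-variables, I look for $v\in W^{1,\infty}$ with $\nabla_{r,t}v=(v_r,v_t)$ lying a.e. in a prescribed set $K(r,t)\subset\RR^2$ determined by the relation $z=\sigma(v_r)$, the equation $v_t=r^{1-n}\partial_r(r^{n-1}z)$, and the constraint $|v_r|\le M+\epsilon$ from (e). Because $\sigma$ is non-monotone, $K(r,t)$ is a nonconvex set; its relaxation (the quasiconvex or lamination-convex hull, here essentially the region under the "maximal monotone envelope" of $\sigma$ as in Zhang \cite{zhang06}) is strictly larger, and one exhibits a classical subsolution—built from the solution of the regularized/relaxed parabolic problem with the same initial and boundary data, which exists and is smooth by standard parabolic theory since the relaxed flux is monotone—whose gradient lies in the interior of this hull on $\Omega_T$ minus the two collars near $r=0$ and $r=R$. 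On the collars one simply keeps this smooth relaxed solution, which is already a genuine solution there because $|Du_0|$, hence $|Du|$ for short time and then by the structure of the relaxed flow for all time, stays in the monotone (subcritical-behaving) range—this is what delivers the uniform $\delta_0$ and property (b).

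Then comes the core existence step: starting from the subsolution, apply the Baire-category method for differential inclusions (à la Dacorogna--Marcellini / Kirchheim, in the parabolic adaptation used by Zhang). One works in the complete metric space of $W^{1,\infty}$ subsolutions with gradients in a fixed large ball, with the weak-$*$ topology, shows that the set of maps whose gradient is \emph{not} in $K$ on a set of positive measure is of first category using a suitable "approximation lemma" (given a subsolution and $\varepsilon>0$, one can perturb it on a region by laminates/plane-wave staircases so that its gradient moves measurably closer to $K$ while staying a subsolution and agreeing near the parabolic boundary), and concludes that a residual set of exact solutions exists—in particular infinitely many. I would then verify (c) and (d) from the fact that every subsolution in the game agrees with the smooth relaxed solution near $t=0$ and near $\partial\Omega$; (e) from the a priori ball constraint built into $K$; and (f) by integrating \eqref{1-3} with $\xi$ approximating $\mathbf 1_\Omega\otimes\mathbf 1_{[0,t]}$ and using the Neumann condition, exactly as the divergence structure permits.

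The main obstacle I expect is the \emph{approximation lemma} in the parabolic, $r$-weighted, nonlocal setting: one must produce admissible oscillatory perturbations that (i) respect the anisotropic parabolic constraint $v_t=r^{1-n}\partial_r(r^{n-1}\sigma(v_r))$ rather than a pointwise gradient inclusion, (ii) are localized so as not to disturb the classical collars near $r=0$ and $r=R$ (where the weight $r^{1-n}$ is singular or the boundary condition lives), and (iii) keep the gradient inside the large ball enforcing (e). Handling the $r=0$ coordinate singularity of the radial Laplacian while maintaining $W^{1,\infty}$ bounds, and checking that the first-category argument's topology interacts correctly with the nonlocal flux term, are the delicate technical points; everything else is a careful but routine transcription of the one-dimensional machinery of \cite{zhang06}.
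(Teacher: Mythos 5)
Your proposal correctly identifies the three-stage strategy (radial reduction, differential inclusion, Baire category) and even names the key ingredients: the primitive/mass variable $w(r,t)=\int_0^r s^{n-1}v\,ds$, the relaxed parabolic problem (with monotonized flux) whose smooth solution provides the starting subsolution and the boundary collars, and the approximation lemma as the technical core. But you stop one crucial step short, and the gap you flag at the end is exactly the gap in your own setup.

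You formulate the inclusion as a \emph{scalar} constraint on $\nabla_{r,t}v=(v_r,v_t)$, with the \emph{nonlocal} side condition $v_t=r^{1-n}\partial_r\bigl(r^{n-1}\sigma(v_r)\bigr)$, and then correctly observe that the Baire-category/approximation machinery does not directly apply to such a nonlocal parabolic constraint. The device the paper uses — and which you do not take — is to elevate the stream variable to a second \emph{unknown}: set $\Phi=(v,\vp)$ with $\vp_s=s^{n-1}v$, $\vp_t=s^{n-1}\sigma(v_s)$, and pose a genuine pointwise differential inclusion on the $2\times2$ Jacobian
\[
\nabla\Phi(s,t)=\begin{pmatrix} v_s & v_t\\ \vp_s & \vp_t\end{pmatrix}\in K^{n-1}_{\lambda,l_0}(s,v(s,t))\cup U^{n-1}_{\lambda,l_0}(s,v(s,t))
\quad\text{a.e.}
\]
Once $\vp$ is treated as a component of the map, \emph{all} constraints — the flux relation, the $s^{n-1}v$ constraint that encodes mass conservation, and the $L^\infty$ bound on $v_t$ — are pointwise conditions on $\nabla\Phi$, and the problem becomes a standard (if inhomogeneous and $u$-dependent) gradient inclusion amenable to the Dacorogna--Marcellini/Kirchheim/M\"uller--\v{S}ver\'ak framework. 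You introduced $w$ only to rewrite the weak formulation and then reverted to a scalar inclusion; this is why the ``main obstacle'' you list remains unresolved in your write-up.

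Two further points you underestimate. First, even after the $2\times2$ reformulation, the weight $s^{n-1}$ makes the target set $K^{n-1}_{\lambda,l_0}(s,v)\cup U^{n-1}_{\lambda,l_0}(s,v)$ genuinely $s$-inhomogeneous, and the auxiliary building blocks used in the gluing step cannot be simple plane-wave laminates: the paper's Construction Lemma (\ref{4-3}) produces piecewise-$C^1$ perturbations whose interfaces are the \emph{nonlinear} $C^1$ curves $\tilde s_1(t),\tilde s(t),\tilde s_2(t)$ precisely so that the pointwise identity $\vp_s=s^{n-1}v$ survives the gluing with zero trace on the diamonds. Your proposal treats this as ``a careful but routine transcription'' of the 1D case, but it is the principal new technical difficulty for $n\ge2$. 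Second, the uniform collar $\delta_0$ in (b) is obtained not by a ``short-time then structural'' argument, but directly from the facts that $v^*_s(0,t)=v^*_s(R,t)=0$ for all $t$ and that $v^*_s$ is uniformly continuous on $\bar J_T$, which gives $|v^*_s|\le\lambda^-$ on a fixed neighborhood of $\{0,R\}\times[0,T]$; this is what makes $\delta_0$ independent of the solution and of time. Your verification of (a) for general (non-radial) $\xi$ also needs care: the paper does not pass by ``density of radial test functions'' but instead proves the stronger identity (\ref{strong}) via the Divergence Theorem applied to $\psi(x,t)=\vp(|x|,t)$ on the punctured domain $\{\epsilon<|x|<R\}$, using $\psi(0,t)=0$ and $\psi|_{|x|=R}=\text{const}$ to kill the boundary terms as $\epsilon\to0^+$.
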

The proof of this theorem will be given in Section \ref{s3}.

Observe that when the space dimension $n=1$, Theorem \ref{1-8} implies the main result  of {\sc Zhang} \cite{zhang06} in a bit more general fashion if we cut the space-time domain $\Omega_T=(-R,R)\times(0,T)$ into half, $(0,R)\times(0,T)$; hence our work is indeed a generalization of \cite{zhang06}.

Let us explain our main approach and the major difficulty that arises if $n>1$. One can easily reformulate the  equation in (\ref{1-1}) for radial functions $u(x,t)$  into the one-dimensional equation:
\begin{equation}\label{1-9}
v_t=(a(v_s^2)v_s)_s+a(v_s^2)v_s\frac{n-1}{s}\,\,\,\,\,\,\textrm{in}\,\,(0,R)\times(0,T),
\end{equation}
where $s=|x|$ is the radial variable and $v(s,t)=u(x,t)$. Using the flux function $\sigma(p)=a(p^2)p$ and overlooking the singularity at $s=0$, this equation can be recast as
\[
(s^{n-1}v)_t=(s^{n-1}\sigma(v_s ) )_s\,\,\,\,\,\,\textrm{in}\,\,(0,R)\times(0,T).
\]
Introduce a stream function $\vp$ with $\vp_s=s^{n-1}v$, $\vp_t=s^{n-1}\sigma(v_s)$, and let  $\Phi=(v,\vp).$ Then, to solve the equation (\ref{1-9}) in a weak form,  it is  sufficient to find a function $\Phi=(v,\vp)\in W^{1,\infty}((0,R)\times (0,T);\RR^2)$ with the Jacobian matrix $\nabla  \Phi(s,t)=\left(
                            \begin{array}{cc}
                              v_s & v_t\\
                              \vp_s& \vp_t \\
                            \end{array}
                          \right)$,  such that
\begin{equation}\label{di-0}
\td\Phi(s,t)\in \Sigma(s,v(s,t))\,\,\,\,\,\,\textrm{for a.e.}\,\,(s,t)\in(0,R)\times (0,T),
\end{equation}
where,  for each $s>0$ and each $v\in\RR$, the set $\Sigma(s,v)$ is defined by
\[
\Sigma(s,v):=\left\{\left(
                            \begin{array}{cc}
                              p & l \\
                              s^{n-1}v & s^{n-1}\sigma(p) \\
                            \end{array}
                          \right)\in\RR^{2\times 2}:p,\, l\in\RR
       \right\}.
\]
If $n=1$, the partial differential inclusion (\ref{di-0})  is the same as in \cite{zhang06} since $s^{n-1}=1$, with the set $\Sigma(s,v)$  independent of $s$.  But the presence of the term $s^{n-1}$ for $n\geq 2$ enormously affects the inclusion problem by making it essentially inhomogeneous in the variable $s$. In the fulfillment of the density result, Theorem \ref{2-3-1}, for applying a Baire's category method in Subsection \ref{s2-1}, we have to construct some auxiliary functions as in \cite{zhang06}. Rather substantial difference occurs in the way of defining these functions in Section \ref{s4} as the equation $\vp_s=s^{n-1}v$ should be kept in every gluing process and the term $s^{n-1}$ makes the functions necessarily depend on the position $s$ where they are glued. Accordingly, auxiliary functions are piecewise $C^1$ with proper $s$-derivatives on the regions that are separated by \emph{nonlinear} $C^1$ curves.

The study of inhomogeneous partial differential inclusions of the type (\ref{di-0}) stems from the successful understandings of homogeneous inclusion of the form $Du(x)\in K$ first encountered in the  study of crystal microstructure  by \textsc{Ball \& James}  \cite{ball87,ball92} and \textsc{Chipot \& Kinderlehrer} \cite{chipot88}. Subsequent developments   including some  important  applications and the generalization to  inhomogeneous differential  inclusions of the form $Du(x)\in K(x,u(x))$ have been extensively explored; see, e.g., \textsc{Dacorogna \&  Marcellini} \cite{dacorogna97, dacorogna99},  \textsc{Kirchheim} \cite{kirchheim01}, \textsc{M\"{u}ller \& \v{S}ver\'{a}k} \cite{muller96, muller99, muller03},  \textsc{M\"{u}ller \& Sychev} \cite{muller01}, and \textsc{Yan} \cite{yan03-1, yan03}. We  point out that in this connection  the differential inclusion method has been recently used in \textsc{De Lellis \& Sz\'ekelyhidi} \cite{delellis09} to study the Euler equations.
There are two  well-known  different approaches in solving the inclusion problem; however,   both derive basically  the same conclusions. The first method is the convex integration of \textsc{Gromov} \cite{gromov73}, elaborated  in \cite{muller01,muller96,muller99,muller03}. The other approach is the Baire's category method, exploited in \cite{dacorogna97,dacorogna99,kirchheim01,yan03-1,yan03}.  We explore  a simpler  Baire's category method based on the density argument to study differential inclusion (\ref{di-0}); our approach is quite different from that of \textsc{Zhang} \cite{zhang06} even for $n=1.$

Let us compare our result with that of \textsc{Ghisi \& Gobbino}  \cite{ghisi11}. Both  papers deal with radial solutions for the Perona-Malik equation in dimension $n\geq 2$. But our result can cover the case $n=1$, although there is an essential difference in the proof if $n\geq 2$. The paper \cite{ghisi11} presents radial \emph{classical} solutions over any annulus excluding the origin to avoid some technical difficulty due to the singularity of the corresponding one-dimensional equation at $s=0$, and it is remarked that excluding the origin may not be essential.  But we construct radial \emph{weak} solutions on a ball including the singularity at $s=0$ for the one-dimensional version, and we observe that the use of auxiliary functions in Section \ref{s4} is somehow \emph{optimal} as the $t$-derivatives of the functions may be very large if the positions $s$ of the functions are close enough to $s=0$, and a cutting line parallel to the $t$-axis for gluing may be very close to the $t$-axis according to the choice of initial data $u_0$. (See item (d) of Lemma \ref{4-16}.) The major difference between the two papers is in the admissible classes of initial data $u_0$ for solvability. In \cite{ghisi11}, the class of possible initial conditions for classical solvability is severely restricted due to the presence of backward (supercritical) region of $u_0$. One has much freedom in choosing the initial values in forward (subcritical) region of $u_0$, but then the initial values in backward region are \emph{determined} by the values in forward region. This phenomenon seems inevitable due to the inherent feature of the forward-backward radial problem. On the other hand, our result can give infinitely many radial weak solutions for any nonconstant smooth radial initial data $u_0$ whether it is transcritical or not. In fact, our result shows that, restricted to the nonconstant radially symmetric initial data, no matter it is the specially selected initial condition in \cite{ghisi11} or the initial condition which is all subcrtical (so the classical solution exists by \cite{kawohl98}), the problem (\ref{1-1}) will always have infinitely many (Lipschitz) radial  weak  solutions.  \\

The rest of this paper is organized as follows. In Section \ref{s2}, we introduce more notations and gather some of the ingredients needed to prove Theorem \ref{1-8}. A Baire's category method is introduced in Subsection \ref{s2-1} and  a classical result for  uniformly parabolic Neumann problems is included  in Subsection \ref{s2-2}  as  a building block that is to be modified by our approach. Section \ref{s2-3} contains the main setup of the problem (\ref{1-1}) into the framework of differential inclusion and the main density result, Theorem \ref{2-3-1}, which plays a pivot role in constructing a weak solution via Baire's method. Section \ref{s3} is devoted to the proof of Theorem \ref{1-8} based on Theorem \ref{2-3-1}. The construction of auxiliary functions needed for the proof of Theorem \ref{2-3-1} is given in Section \ref{s4}.  The proof of  Theorem \ref{2-3-1} is finally given  in  Section \ref{s5}.


\section{Notation and preliminaries}\label{s2}

We introduce some notations here. Let $N,n\in\NN$. For any measurable set $X\subset\RR^n$, $|X|$ denotes the Lebesgue measure of $X$. We denote by $\RR^{N\times n}$ the space of $N\times n$ real matrices, and for each $A=(a_{ij})\in\RR^{N\times n}$, we let $|A|$ be the Hilbert-Schmidt norm of $A$, that is,
$$
|A|:=\left(\sum_{i=1}^N\sum_{j=1}^{n} a_{ij}^2\right)^{1/2}.
$$
We let $O(n)$ denote the space of $n\times n$ orthogonal real matrices. For each $A\in\RR^{N\times n}$ and each $K\subset\RR^{N\times n}$, the distance from $A$ to the set $K$ is defined by
$$
\mathrm{dist}(A,K):=\inf_{B\in K}|A-B|.
$$
For $1\leq p\leq\infty$, let $W^{1,p}(\Omega;\RR^N)$ denote the usual Sobolev space of functions $u\in L^p(\Omega;\RR^N)$ whose first weak derivatives of each component exist and belong to $L^p(\Omega)$, where $\Omega\subset\RR^n$ is open. Also $W^{1,\infty}_0(\Omega;\RR^N):=W^{1,\infty}(\Omega;\RR^N)\cap W^{1,1}_0(\Omega;\RR^N)$, where $W^{1,1}_0(\Omega;\RR^N)$ is the closure of $C^\infty_0(\Omega;\RR^N)$ in $W^{1,1}(\Omega;\RR^N)$.

The following two lemmas are standard and used throughout this paper; see, e.g., \cite{dacorogna08,dacorogna99}.
\begin{lem}[Vitali Covering Lemma]
Let $\tilde\Omega$ and $\Omega$ be open sets in $\mathbb{R}^n$ with $\Omega$ bounded and $|\partial\Omega|=0$. Then for each $\epsilon>0$, there exist a sequence $\{x_j\}_{j\in\mathbb{N}}$ in $\mathbb{R}^n$ and a sequence $\{\epsilon_j\}_{j\in\mathbb{N}}$ of positive reals such that
\begin{equation*}
\left\{ \begin{array}{l}
  x_j+\epsilon_j\Omega\subset\tilde{\Omega}\,\,\,\,\,\,\textrm{and}\,\,\,\,\,\, \epsilon_j\leq\epsilon\,\,\,\,\,\, \forall j\in\mathbb{N},\\
  (x_j+\epsilon_j\Omega)\cap(x_k+\epsilon_k\Omega)=\emptyset\,\,\,\,\,\, \forall j,k\in\mathbb{N} \textrm{ with } j\neq k,\\
  |\Omega\setminus\cup_{j=1}^\infty(x_j+\epsilon_j\Omega)|=0.
\end{array} \right.
\end{equation*}
\end{lem}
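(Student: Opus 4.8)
The plan is to build the desired disjoint family by exhaustion, dilating and translating the fixed ``model'' set $\Omega$ repeatedly into the interior of $\tilde\Omega$ while tracking how much of $\Omega$ remains uncovered. First I would fix $\epsilon>0$ and set up the exhaustion: since $\Omega$ is bounded, choose $r>0$ with $\Omega\subset B_r(0)$; then for any $x$ and any $\rho>0$ the translate--dilate $x+\rho\Omega$ is contained in $B_{\rho r}(x)$, so by taking $\rho$ small and $x$ ranging over a fine enough lattice, the scaled copies $x+\rho\Omega$ with $x+\rho\Omega\subset\tilde\Omega$ cover a fixed positive fraction of $\tilde\Omega$ (this is where boundedness of $\Omega$ and openness of $\tilde\Omega$ are used, plus $|\partial\Omega|=0$ so that $|x+\rho\Omega|=\rho^n|\Omega|$ and the overlap of boundaries is negligible). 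The key quantitative fact I would isolate as a sublemma: there is a constant $\theta=\theta(\Omega)\in(0,1)$ such that for every bounded open $U$ one can find finitely many pairwise disjoint sets of the form $x_j+\rho_j\Omega\subset U$, each with $\rho_j\le\epsilon$, whose union has measure at least $\theta\,|U|$. This follows by a standard covering argument (e.g.\ the classical Vitali argument applied to the balls $B_{\rho r}(x)$ circumscribing the copies of $\Omega$, then passing to the inscribed copies of $\Omega$ and using $|\partial\Omega|=0$).

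Granting the sublemma, I would iterate. Apply it with $U=\tilde\Omega$ to get a finite disjoint family $F_1$ of admissible copies covering at least a $\theta$-fraction of $\tilde\Omega$; let $G_1=\tilde\Omega\setminus\overline{\bigcup F_1}$, which is open. Then apply the sublemma with $U=G_1$, obtaining $F_2\subset G_1$ disjoint, covering at least $\theta|G_1|$; continue, producing disjoint finite families $F_m$ living in nested open sets $G_m$ with $|G_{m+1}|\le(1-\theta)|G_m|$, hence $|G_m|\le(1-\theta)^{m-1}|\tilde\Omega|\to0$. Relabel the countable union $\bigcup_m F_m$ as $\{x_j+\epsilon_j\Omega\}_{j\in\NN}$. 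By construction these are pairwise disjoint, each is contained in $\tilde\Omega$, and each $\epsilon_j\le\epsilon$, giving the first two bullet conditions.

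For the last condition one must pass from ``covers almost all of $\tilde\Omega$'' to ``covers almost all of $\Omega$''; but that is not what is asked --- rereading the statement, the third line is $|\Omega\setminus\bigcup_j(x_j+\epsilon_j\Omega)|=0$, with the \emph{same} $\Omega$. So in fact I would run the entire argument above with $\tilde\Omega$ itself replaced at the outset by $\Omega$ in the role of the ambient set to be filled, i.e.\ fill $\Omega$ by disjoint small homothetic copies of $\Omega$ contained in $\Omega$; the hypothesis $\tilde\Omega$ open with $\Omega\subset\tilde\Omega$ is only needed to guarantee the first copies fit (one may simply take all $x_j\in\Omega$ close to, and $\epsilon_j$ small). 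Since $\left|\Omega\setminus\bigcup_{j=1}^\infty\bigl(x_j+\epsilon_j\Omega\bigr)\right|\le\lim_{m\to\infty}|G_m|=0$, the proof is complete.

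The main obstacle is the sublemma, specifically the transition between the round balls on which the classical Vitali lemma operates and the arbitrary bounded open model set $\Omega$: one must verify that a fixed fraction of each ball $B_{\rho r}(x)$ retained by the Vitali selection is actually covered by the inscribed copy $x+\rho\Omega$, uniformly in the scale, and this is exactly where $|\partial\Omega|=0$ enters (so that $|x+\rho\Omega|/|B_{\rho r}(x)|=|\Omega|/(\omega_n r^n)$ is a positive constant independent of $\rho$ and the discarded boundary contributes nothing). Everything else is bookkeeping with the geometric decay $|G_m|\le(1-\theta)^{m-1}|\Omega|$.
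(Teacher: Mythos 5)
The paper gives no proof of this lemma; it is taken as standard with a pointer to the references \cite{dacorogna08,dacorogna99}, so there is no ``paper's own proof'' to compare against. The exhaustion scheme you outline --- isolate a universal $\theta=\theta(\Omega)\in(0,1)$ such that any bounded open $U$ contains finitely many disjoint homothets $x_j+\rho_j\Omega\subset U$ with $\rho_j\le\epsilon$ of total measure $\ge\theta|U|$ (the $\theta$ coming from the ball Vitali theorem together with the fixed ratio of $|\Omega|$ to the volume of a ball circumscribing $\Omega$, with $|\partial\Omega|=0$ ensuring the scaled boundaries are negligible), then iterate and use the geometric decay $|G_m|\le(1-\theta)^m|G_0|\to 0$ --- is exactly the standard argument those references give, and it is correct.

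Your remark on the third displayed condition is the one point worth pressing. As printed, the lemma claims $|\Omega\setminus\bigcup_j(x_j+\epsilon_j\Omega)|=0$ while only requiring $x_j+\epsilon_j\Omega\subset\tilde\Omega$ and positing no relation between $\Omega$ and $\tilde\Omega$; taken literally this is false (choose $\tilde\Omega$ disjoint from $\Omega$: every admissible homothet then misses $\Omega$, forcing $|\Omega|=0$). The two invocations in Section \ref{s5} --- tiling $\hat{G}_i$ by squares with $|\hat{G}_i\setminus\bigcup_k Q^k_i|=0$, and tiling each $Q^k_i$ by diamonds whose union has full measure in $Q^k_i$ --- make clear that the intended third condition is $|\tilde\Omega\setminus\bigcup_j(x_j+\epsilon_j\Omega)|=0$; the ``$\Omega$'' there is a typo. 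With that reading your exhaustion must be run on $\tilde\Omega$, not on $\Omega$ (and if $\tilde\Omega$ were unbounded one would first exhaust it by bounded open subsets, though every $\tilde\Omega$ the paper actually uses is bounded). Your alternative --- adding the hypothesis $\Omega\subset\tilde\Omega$ and covering $\Omega$ itself --- is a true statement and your argument proves it, but it is not the version the paper needs; absent that hypothesis, you should simply read the third condition with $\tilde\Omega$ and run the same argument there.
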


\begin{lem}[Gluing lemma]
Let $\Omega$ be a bounded open set in $\mathbb{R}^n$, and let $\{\Omega_j\}_{j\in\mathbb{N}}$ be a sequence of disjoint open sets in $\Omega$. Let $u\in W^{1,\infty}(\Omega;\mathbb{R}^N)$, and let $u_j\in u+W^{1,\infty}_0(\Omega_j;\mathbb{R}^N)$ for each $j\in\mathbb{N}$. If $\sup_{j\in\mathbb{N}}\|u_j\|_{W^{1,\infty}(\Omega_j;\mathbb{R}^N)}<\infty$ and $\tilde{u}:=u\chi_{\Omega\setminus\cup_{j=1}^\infty \Omega_j}+\sum_{j=1}^\infty u_j\chi_{\Omega_j}$, then
$\tilde{u}\in u+W^{1,\infty}_0(\Omega;\mathbb{R}^N).$
\end{lem}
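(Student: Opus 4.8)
The plan is to work entirely with the increment $w := \tilde u - u$ and to prove that $w\in W^{1,\infty}_0(\Omega;\RR^N)$, which is precisely the asserted conclusion $\tilde u\in u + W^{1,\infty}_0(\Omega;\RR^N)$.

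First I would record, for each $j$, that $w_j := u_j - u\in W^{1,\infty}_0(\Omega_j;\RR^N)$ and that its extension $\bar w_j$ to $\Omega$ by zero lies in $W^{1,\infty}_0(\Omega;\RR^N)$, with weak derivatives equal to the zero-extensions of those of $w_j$. This follows from a standard approximation argument: choose $\psi_i\in C_0^\infty(\Omega_j;\RR^N)$ with $\psi_i\to w_j$ in $W^{1,1}(\Omega_j;\RR^N)$, extend each $\psi_i$ by zero to obtain functions in $C_0^\infty(\Omega;\RR^N)$ that converge to $\bar w_j$ in $W^{1,1}(\Omega;\RR^N)$, and observe that the $L^\infty$ norms of $\bar w_j$ and of its gradient agree with those over $\Omega_j$. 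Since the $\Omega_j$ are pairwise disjoint, at every point of $\Omega$ at most one $\bar w_j$ is nonzero, so $w := \sum_{j=1}^\infty \bar w_j$ is a well-defined measurable function; inspecting the definition of $\tilde u$ on each $\Omega_j$ (there $\tilde u = u_j = u + \bar w_j$) and on $\Omega\setminus\bigcup_j\Omega_j$ (there $\tilde u = u = u + 0$) shows $\tilde u = u + w$ a.e.

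Next I would show $w\in W^{1,\infty}(\Omega;\RR^N)$. Put $C := \sup_j\|u_j\|_{W^{1,\infty}(\Omega_j;\RR^N)} + \|u\|_{W^{1,\infty}(\Omega;\RR^N)} < \infty$; by disjointness of the supports, $\|w\|_{L^\infty(\Omega)}\le C$, and the candidate partial derivatives $g_k := \sum_{j=1}^\infty \partial_k\bar w_j$ for $k=1,\dots,n$, each again a pointwise sum with at most one nonzero term, satisfy $\|g_k\|_{L^\infty(\Omega;\RR^N)}\le C$. To see $\partial_k w = g_k$ weakly, fix $\phi\in C_0^\infty(\Omega)$: the partial sums of $\sum_j\bar w_j\,\partial_k\phi$ and of $\sum_j(\partial_k\bar w_j)\,\phi$ are eventually constant at each point and are dominated by $C|\partial_k\phi|$ and $C|\phi|$, which lie in $L^1(\Omega)$, so dominated convergence together with the componentwise identity $\int_\Omega \bar w_j\,\partial_k\phi\,dx = -\int_\Omega(\partial_k\bar w_j)\,\phi\,dx$ (valid because $\bar w_j\in W^{1,\infty}_0(\Omega;\RR^N)$) gives $\int_\Omega w\,\partial_k\phi\,dx = -\int_\Omega g_k\,\phi\,dx$.

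It remains to prove $w\in W^{1,1}_0(\Omega;\RR^N)$; combined with the previous paragraph this gives $w\in W^{1,\infty}_0(\Omega;\RR^N)$ and finishes the proof. Consider the finite partial sums $W_m := \sum_{j=1}^m\bar w_j\in W^{1,\infty}_0(\Omega;\RR^N)\subset W^{1,1}_0(\Omega;\RR^N)$. Since $w - W_m = \sum_{j>m}\bar w_j$ is supported in $\bigcup_{j>m}\Omega_j$ and $|w| + |Dw|\le 2C$ pointwise, $\|w - W_m\|_{W^{1,1}(\Omega;\RR^N)}\le 2C\sum_{j>m}|\Omega_j|$. Because the $\Omega_j$ are disjoint subsets of the bounded set $\Omega$, $\sum_{j=1}^\infty|\Omega_j|\le|\Omega| < \infty$, so the tail $\sum_{j>m}|\Omega_j|\to 0$ and hence $W_m\to w$ in $W^{1,1}(\Omega;\RR^N)$; as $W^{1,1}_0(\Omega;\RR^N)$ is closed in $W^{1,1}(\Omega;\RR^N)$, this yields $w\in W^{1,1}_0(\Omega;\RR^N)$. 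The only mildly delicate point is the zero-extension step of the first paragraph; the rest is bookkeeping organized around the disjointness of the $\Omega_j$ and the summability $\sum_j|\Omega_j| < \infty$.
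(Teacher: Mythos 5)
Your proof is correct. The paper itself gives no argument for this lemma --- it is stated as standard with a citation to Dacorogna's books --- so there is nothing internal to compare against; your write-up is essentially the standard proof those references contain. The two points that actually need care are handled properly: the zero-extension of $w_j=u_j-u$ lands in $W^{1,\infty}_0(\Omega;\RR^N)$ precisely because the paper defines $W^{1,\infty}_0$ as $W^{1,\infty}\cap W^{1,1}_0$ (so the $C_0^\infty$ approximation in $W^{1,1}(\Omega_j)$ is available by definition and extends by zero), and the passage to the infinite sum rests on the uniform $W^{1,\infty}$ bound together with $\sum_j|\Omega_j|\le|\Omega|<\infty$, which gives the $W^{1,1}$ tail estimate and membership in the closed subspace $W^{1,1}_0(\Omega;\RR^N)$.
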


\subsection{A Baire's category method}\label{s2-1}

\begin{defi}[Baire-one map]
Let $X$ and $Y$ be metric spaces. Then $f:X\rightarrow Y$ is called a Baire-one map if it is pointwise limit of a sequence of continuous maps from $X$ into $Y$.
\end{defi}

The proofs of the next two results can be found in  \cite[Chapter 10]{dacorogna08}.

\begin{thm}[Baire's Category Theorem] \label{2-1-1}
Let $X$ and $Y$ be metric spaces with $X$ complete. If $f:X\rightarrow Y$ is a Baire-one map, then $\mathcal{D}_f$ is of the first category, where $\mathcal{D}_f$ is the set of points in $X$ at which $f$ is discontinuous. Therefore, the set $\mathcal{C}_f$ of points in $X$ at which $f$ is continuous, that is,  $\mathcal{C}_f:=X\setminus \mathcal{D}_f$,  is dense in $X$.
\end{thm}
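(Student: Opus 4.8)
The plan is to prove this by the classical oscillation-function argument, invoking completeness of $X$ twice. First I would attach to the map $f\colon X\RA Y$ its oscillation function
\[
\omega_f(x):=\inf_{r>0}\ \mathrm{diam}\,f(B_r(x))=\inf_{r>0}\ \sup_{y,z\in B_r(x)}d_Y(f(y),f(z)),
\]
and recall the elementary fact that $f$ is continuous at $x$ precisely when $\omega_f(x)=0$. Hence $\mathcal D_f=\{x\in X:\omega_f(x)>0\}=\bigcup_{k\in\NN}F_k$, where $F_k:=\{x\in X:\omega_f(x)\ge 1/k\}$, and a short check shows each $F_k$ is closed (if $\omega_f(x)<1/k$, then $\omega_f<1/k$ on a whole ball about $x$). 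So it suffices to prove that every $F_k$ has empty interior: then $\mathcal D_f$ is of the first category, and $\mathcal C_f=X\setminus\mathcal D_f=\bigcap_k(X\setminus F_k)$ is a countable intersection of dense open sets, hence dense because $X$ is a complete metric space.

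Next, fix $k$ and argue by contradiction: suppose $F_k$ contains a nonempty open ball $B$. Write $f=\lim_n f_n$ pointwise with each $f_n\colon X\RA Y$ continuous, and set
\[
E_m:=\bigcap_{i,j\ge m}\bigl\{x\in X:\ d_Y(f_i(x),f_j(x))\le \tfrac1{5k}\bigr\}.
\]
Each $E_m$ is closed since the $f_i$ are continuous, and $\bigcup_m E_m=X$ because $(f_n(x))_n$ is convergent, hence Cauchy, for every $x$. Letting $j\to\infty$ in the defining inequality gives $d_Y(f_m(x),f(x))\le 1/(5k)$ for all $x\in E_m$.

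Then I would apply Baire's theorem to the closed ball $\overline B$, which is itself a complete metric space with $\overline B=\bigcup_m(E_m\cap\overline B)$: some $E_m\cap\overline B$ has nonempty relative interior, so after shrinking we obtain a nonempty open ball $B'\subset B$ with $B'\subset E_m$. Pick $x_0\in B'$; by continuity of $f_m$ at $x_0$ there is an open ball $B''$ with $x_0\in B''\subset B'$ and $d_Y(f_m(y),f_m(x_0))\le 1/(5k)$ for $y\in B''$. For all $y,z\in B''$ (which lie in $E_m$),
\[
d_Y(f(y),f(z))\le d_Y(f(y),f_m(y))+d_Y(f_m(y),f_m(x_0))+d_Y(f_m(x_0),f_m(z))+d_Y(f_m(z),f(z))\le \tfrac{4}{5k},
\]
so $\omega_f(x_0)\le 4/(5k)<1/k$, contradicting $x_0\in B'\subset B\subset F_k$. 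Thus each $F_k$ has empty interior and the proof is complete.

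I expect the only genuinely delicate point to be the bookkeeping in the last step — ensuring that the relatively open subset of $\overline B$ furnished by Baire's theorem really contains an honest open ball of $X$ that sits inside both $B$ and $E_m$ — together with the mild self-reference of using completeness of $X$ (via $\overline B$) inside a proof whose conclusion is a Baire-type density statement. The constant $1/(5k)$ is not essential; any choice making $4/(5k)<1/k$ works.
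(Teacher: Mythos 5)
Your proof is correct and is the standard oscillation-function argument for Baire-one maps; the paper itself does not reproduce a proof but refers to \cite{dacorogna08} (Chapter 10), where the argument runs along exactly these lines. The one delicate step you flag is indeed the only place requiring care, and it is handled as you indicate: since $B$ is dense in $\overline{B}$, the relatively open subset of $\overline{B}$ produced by Baire's theorem inside $E_m\cap\overline{B}$ must meet $B$, so it contains a genuine open ball of $X$ lying in $B\cap E_m$.
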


\begin{prop}\label{2-1-2}
Let $N$ and $n$ be two positive integers. Let $U$ be a bounded open set in $\mathbb{R}^n$, and let $X \subset W^{1,\infty}(U;\mathbb{R}^N)$ be equipped with the $L^{\infty}(U;\mathbb{R}^N)$-metric. Then the gradient operator $$\nabla: X\rightarrow L^p(U;\mathbb{R}^{N\times n})$$ is a Baire-one map for every $p\in [1,\infty)$.
\end{prop}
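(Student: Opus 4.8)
\textbf{Proof plan for Proposition \ref{2-1-2}.}

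The plan is to exhibit, for each fixed $p\in[1,\infty)$, an explicit sequence of \emph{continuous} maps $T_k\colon X\to L^p(U;\RR^{N\times n})$ converging pointwise on $X$ to the gradient operator $\nabla$. Since $X$ carries the $L^\infty$-metric, a natural candidate is a smoothing (mollification) of the gradient: for a standard mollifier $\rho\in C_0^\infty(\RR^n)$ with $\int\rho=1$ and $\rho_k(y)=k^n\rho(ky)$, first extend each $u\in X$ to a function on $\RR^n$ (say by the bounded linear extension operator for Lipschitz functions, or more simply by noting that $X\subset W^{1,\infty}(U)$ has a uniform Lipschitz representative on $U$ and extending componentwise preserving the Lipschitz constant), and set $T_k u := \nabla(u * \rho_k)\big|_U = (Du)*\rho_k\big|_U$, restricted to $U$ (shrinking slightly and then cutting off, or working on $U_{1/k}=\{x\in U:\operatorname{dist}(x,\partial U)>1/k\}$ and extending by $0$ — any such device is routine).

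The verification then has two ingredients. First, \emph{continuity of each $T_k$ from the $L^\infty$-metric to $L^p$}: if $u_j\to u$ in $L^\infty(U;\RR^N)$ then $u_j*\rho_k\to u*\rho_k$ in $C^1$ on compact subsets, because convolution with the fixed smooth kernel $\rho_k$ turns $L^\infty$-convergence into uniform convergence of the function and of all its derivatives (one differentiates under the convolution: $\nabla(u_j*\rho_k)=u_j*\nabla\rho_k$, and $\|u_j*\nabla\rho_k-u*\nabla\rho_k\|_{L^\infty}\le \|u_j-u\|_{L^\infty}\|\nabla\rho_k\|_{L^1}$). Hence $T_k u_j\to T_k u$ uniformly, a fortiori in $L^p(U)$ since $U$ is bounded. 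Second, \emph{pointwise convergence $T_k u\to \nabla u$ in $L^p$ for each fixed $u\in X$}: since $u\in W^{1,\infty}(U)$, its gradient $Du\in L^\infty(U)\subset L^p(U)$ (again $U$ bounded), and the standard property of mollifiers gives $(Du)*\rho_k\to Du$ in $L^p(U)$ as $k\to\infty$; the slight domain adjustment contributes an error supported on a set of measure $\to 0$ on which the integrand is uniformly bounded by $\sup_j\|Du_j\|_\infty$-type quantities — wait, more simply by $\|Du\|_{L^\infty(U)}$ for the fixed $u$ — so that error also tends to $0$ in $L^p$.

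The main obstacle, such as it is, is purely technical: handling the behaviour near $\partial U$, since mollification is only immediately defined on an interior subdomain. I would dispose of this by choosing an open bounded $U'\Supset U$ and a \emph{fixed} (independent of $u$, depending only on $U$) linear extension operator $E\colon W^{1,\infty}(U)\to W^{1,\infty}_{\mathrm{loc}}(\RR^n)$ — or, avoiding operator norms on Lipschitz extensions, simply extend each coordinate of $u$ by the McShane--Whitney formula, which is $1$-Lipschitz in the $L^\infty$-metric on values and preserves the Lipschitz constant — then define $T_k u=\nabla((Eu)*\rho_k)\big|_U$. This makes $T_k$ globally defined and continuous on all of $X$ in the $L^\infty$-metric, and pointwise convergence on $U$ follows because $Eu$ agrees with $u$ on $U$ and mollification converges in $L^p_{\mathrm{loc}}$. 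Once these two properties are in hand, $\nabla=\lim_k T_k$ pointwise with each $T_k$ continuous, which is exactly the definition of a Baire-one map, completing the proof.
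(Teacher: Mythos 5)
Your overall strategy is the right one and, up to the boundary issue, matches the standard proof: mollify, use the $L^\infty$-to-$L^\infty$ bound $\|(u_j-u)*\nabla\rho_k\|_{L^\infty}\le\|u_j-u\|_{L^\infty}\|\nabla\rho_k\|_{L^1}$ to get continuity of each $T_k$, and use $L^p$-convergence of mollifications for the pointwise limit. The gap is precisely in the device you single out to handle $\partial U$, namely the global extension.

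Neither version of the extension step works for a general bounded open $U$. A linear Sobolev extension operator $W^{1,\infty}(U)\to W^{1,\infty}_{\mathrm{loc}}(\RR^n)$ need not exist (the proposition imposes no regularity on $\partial U$; think of a domain with an outward cusp). The McShane--Whitney formula $E_L u(x)=\inf_{y\in U}\{u(y)+L|x-y|\}$ does satisfy $\|E_Lf-E_Lg\|_{L^\infty}\le\|f-g\|_{L^\infty}$, but only for a \emph{fixed} parameter $L$, and it restricts to $u$ on $U$ only when $L\ge \mathrm{Lip}(u)$. Since $X$ is just a subset of $W^{1,\infty}(U;\RR^N)$ with no uniform Lipschitz bound, you are forced to take $L=L(u)=\mathrm{Lip}(u)$ depending on $u$, and then $u\mapsto E_{L(u)}u$ is \emph{not} $L^\infty$-continuous. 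For instance with $U=(0,1)$, $u_j(y)=\epsilon_j\sin(y/\epsilon_j^2)$ satisfies $\|u_j\|_{L^\infty}=\epsilon_j\to 0$ and $L_j=\mathrm{Lip}(u_j)=1/\epsilon_j$, yet for $x<0$ one computes $E_{L_j}u_j(x)=|x|/\epsilon_j\to\infty$, while $E_0 0\equiv 0$; so $T_k u_j(x)$ picks up these divergent extension values for $x\in U$ within distance $1/k$ of $0$, and $T_k$ is not continuous at $u\equiv 0$.

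The alternative you mention almost in passing -- work on $U_{1/k}=\{x\in U:\mathrm{dist}(x,\partial U)>1/k\}$, set $T_ku:=u*\nabla\rho_{\epsilon_k}$ there with $\epsilon_k<1/k$, and $T_ku:=0$ on $U\setminus U_{1/k}$ -- is the one that actually closes the argument and is what the cited reference does. With that choice $T_k$ uses only values of $u$ in $U$, so $\|T_ku_j-T_ku\|_{L^\infty(U)}\le\|\nabla\rho_{\epsilon_k}\|_{L^1}\|u_j-u\|_{L^\infty(U)}$ gives continuity (into $L^p$ since $|U|<\infty$); and for fixed $u$, on $U_{1/k}$ one has $u*\nabla\rho_{\epsilon_k}=\nabla u*\rho_{\epsilon_k}\to\nabla u$ in $L^p$ (dominated convergence, bound $\|\nabla u\|_{L^\infty}$), while the defect on $U\setminus U_{1/k}$ is $\int_{U\setminus U_{1/k}}|\nabla u|^p\le \|\nabla u\|_{L^\infty}^p\,|U\setminus U_{1/k}|\to 0$. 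Please make this the primary route and drop the global-extension device; as stated, the extension argument does not prove the proposition.
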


Observe that if $X$ in Proposition \ref{2-1-2} is complete with respect to the $L^\infty$-metric, it follows from Theorem \ref{2-1-1} that the set $\mathcal{C}_\td$ of points of continuity for the gradient operator $\td$ is $L^\infty$-dense in $X$. In our application we take $p=1$ and $X$ to be the $L^\infty$-closure of the admissible class $\mathcal{P}^{n-1}_{\lambda,l_0}$ defined in Section \ref{s2-3} with $m=n-1$, so that $\mathcal{C}_\td$ is $L^\infty$-dense in $X$. This is a much shorter way to achieve the important principle that controlled $L^\infty$ convergence implies $W^{1,1}$ convergence, exlpored in \cite{muller01} by convex integration method. This explains that Baire's method is \emph{somehow} equivalent to the convex integration.

\subsection{Classical solution as building block}\label{s2-2}
We need the following result to build the \emph{nonempty} admissible class $\mathcal{P}^{n-1}_{\lambda,l_0}$ for the proof of Theorem \ref{1-8}.

\begin{thm}\label{2-2-1}
Let $u_0\in C^{3,\alpha}(\bar\Omega)$ be a radially symmetric function such that the compatibility condition holds:
\begin{displaymath}
\frac{\partial u_0}{\partial \mathbf{n}}(x)=0 \,\,\,\,\,\,\forall x\in\partial\Omega.
\end{displaymath}
Let $a^*\in C^{2,\alpha}([0,\infty))$ be positive on $[0,\infty)$. Define $\sigma^*(p):=a^*(p^2)p$ for every $p\in\RR$. Suppose that there exist two constants $C\geq c>0$ such that
\begin{equation}\label{para-0}
c\leq (\sigma^*)'(p)\leq C\,\,\,\,\,\,\forall p\geq 0.
\end{equation}
Then the Neumann problem
\begin{equation}\label{2-2-2}
\left\{ \begin{array}{ll}
  u^*_t=\mathrm{div}(a^*(|Du^*|^2)Du^*) & \textrm{in }\Omega_T \\
  \frac{\partial u^*}{\partial \mathbf{n}}=0 & \textrm{on }\partial\Omega\times(0,T) \\
  u^*(x,0)=u_0(x) & \textrm{for } x\in\Omega
\end{array} \right.
\end{equation}
has a unique solution $u^*\in C^{3+\alpha,1+\alpha/2}(\bar\Omega_T)$. Moreover, $u^*$ is radially symmetric in $\bar\Omega_T$, that is, for each $t\in[0,T]$,
$u^*(x,t)=u^*(y,t)$ whenever $x,y\in\bar\Omega,\,\,|x|=|y|,$
and we have the maximum principle:
\begin{displaymath}
\max_{\bar\Omega_T}|Du^*|=\max_{\bar\Omega}|Du_0|.
\end{displaymath}
\end{thm}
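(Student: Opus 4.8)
The plan is to recognize (\ref{2-2-2}) as a classical uniformly parabolic quasilinear Neumann (in fact conormal) problem, deduce existence, uniqueness and regularity from the standard theory for such problems, and then get the radial symmetry from uniqueness and the gradient bound from a maximum principle. First I would rewrite the equation in non-divergence form $u^*_t=\sum_{i,j}a_{ij}(Du^*)u^*_{x_ix_j}$ with $a_{ij}(\xi)=a^*(|\xi|^2)\delta_{ij}+2(a^*)'(|\xi|^2)\xi_i\xi_j$. A direct computation gives that for every $\xi\in\RR^n$ the symmetric matrix $(a_{ij}(\xi))$ has eigenvalue $a^*(|\xi|^2)$ on $\xi^{\perp}$ (multiplicity $n-1$) and eigenvalue $a^*(|\xi|^2)+2|\xi|^2(a^*)'(|\xi|^2)=(\sigma^*)'(|\xi|)$ in the direction $\xi$. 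Since $\sigma^*(0)=0$, integrating the hypothesis (\ref{para-0}) yields $cp\le\sigma^*(p)\le Cp$ for $p\ge 0$, hence $c\le\sigma^*(p)/p=a^*(p^2)\le C$, so \emph{both} eigenvalues lie in $[c,C]$ for all $\xi$: the operator is uniformly parabolic with ellipticity constants $c,C$ independent of $Du^*$, and the coefficients $\xi\mapsto a_{ij}(\xi)$ are $C^{2,\alpha}$ because $a^*\in C^{2,\alpha}([0,\infty))$ and $\xi\mapsto|\xi|^2$ is polynomial. Equivalently, $A(\xi):=a^*(|\xi|^2)\xi=\td_\xi F(\xi)$ with $F(\xi)=\tfrac12\int_0^{|\xi|^2}a^*(\tau)\,d\tau$ uniformly convex, so $A$ is strictly monotone: $(A(\xi)-A(\eta))\cdot(\xi-\eta)\ge c|\xi-\eta|^2$.

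With uniform parabolicity and $C^{2,\alpha}$ structure available, existence of a solution $u^*\in C^{3+\alpha,1+\alpha/2}(\bar\Omega_T)$ follows from the classical existence and regularity theory for quasilinear uniformly parabolic equations under conormal boundary conditions: one establishes the a priori estimates — the sup bound $\min_{\bar\Omega}u_0\le u^*\le\max_{\bar\Omega}u_0$ by comparison with constants (no zeroth-order term, homogeneous Neumann data), a Hölder gradient estimate by applying De Giorgi--Nash--Moser to the linear divergence-form equations satisfied by the components $u^*_{x_k}$, and then Schauder estimates and bootstrapping — and runs the method of continuity along $A_\tau(\xi)=(1-\tau)\xi+\tau A(\xi)$, which is uniformly elliptic with constants $\min(1,c),\max(1,C)$ independent of $\tau\in[0,1]$ and reduces to the heat equation at $\tau=0$; the compatibility condition $\partial u_0/\partial\mathbf{n}=0$ is precisely what allows the scheme to run up to the corner $\partial\Omega\times\{0\}$. (Alternatively one can exploit the gradient-flow/monotone structure via semigroup theory.) Uniqueness is the energy estimate: for two solutions $u_1,u_2$, testing $\partial_t(u_1-u_2)=\mathrm{div}(A(Du_1)-A(Du_2))$ against $u_1-u_2$, noting $A(Du_i)\cdot\mathbf{n}=a^*(|Du_i|^2)\,Du_i\cdot\mathbf{n}=0$ on $\partial\Omega$, and using strict monotonicity gives $\tfrac{d}{dt}\tfrac12\|u_1-u_2\|_{L^2(\Omega)}^2\le-c\|D(u_1-u_2)\|_{L^2(\Omega)}^2\le 0$, so $u_1\equiv u_2$.

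Radial symmetry is then immediate from uniqueness: for any $Q\in O(n)$ the function $(x,t)\mapsto u^*(Qx,t)$ solves the same problem, since $\Omega=B_R(0)$ is $O(n)$-invariant, $\mathrm{div}(A(D\,\cdot\,))$ is $O(n)$-equivariant (as $A$ depends on $Du^*$ only through $|Du^*|$), the homogeneous Neumann condition is preserved, and $u_0(Qx)=u_0(x)$; hence $u^*(Qx,t)=u^*(x,t)$ for every $Q$, i.e. $u^*(\cdot,t)$ is radial for each $t\in[0,T]$. For the gradient maximum principle I would set $v:=|Du^*|^2$; differentiating the equation in $x_k$, multiplying by $2u^*_{x_k}$, and summing over $k$, the term $2\sum_k a_{ij}(Du^*)u^*_{x_kx_i}u^*_{x_kx_j}$ is nonnegative by ellipticity and may be dropped, leaving a differential inequality $v_t-\sum_{i,j}a_{ij}(Du^*)v_{x_ix_j}-\sum_l b_l v_{x_l}\le 0$ for a linear uniformly parabolic operator with bounded coefficients and no zeroth-order term. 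On $\partial\Omega\times(0,T)$, extending $\mathbf{n}$ by $x/|x|$ and using that $Du^*\cdot\mathbf{n}\equiv 0$ on $\partial\Omega$ (so its tangential derivative along the tangent field $Du^*$ vanishes) together with the positivity of the second fundamental form of $\partial B_R$, one computes $\partial v/\partial\mathbf{n}=2\sum_{j,k}u^*_{x_j}u^*_{x_jx_k}n_k=-\tfrac2R\big(|Du^*|^2-(Du^*\cdot\mathbf{n})^2\big)=-\tfrac2R|Du^*|^2\le 0$. The parabolic maximum principle with the Hopf lemma on the lateral boundary then forces $v$ to attain its maximum over $\bar\Omega_T$ at $t=0$, so $\max_{\bar\Omega_T}|Du^*|^2\le\max_{\bar\Omega}|Du_0|^2$; the reverse inequality is trivial, whence $\max_{\bar\Omega_T}|Du^*|=\max_{\bar\Omega}|Du_0|$.

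The hard part is the gradient maximum principle up to the Neumann boundary — specifically the boundary identity $\partial_{\mathbf{n}}|Du^*|^2\le 0$, which is exactly where convexity of the ball is essential (it would fail for non-convex $\Omega$) — together with the careful bookkeeping needed to extract the full $C^{3+\alpha,1+\alpha/2}$ regularity, in particular continuity of $u^*_{x_it}$ up to the corner $\partial\Omega\times\{0\}$, from the single compatibility hypothesis.
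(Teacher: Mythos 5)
Your proof is correct, and its treatment of the gradient maximum principle is genuinely different from the paper's. The paper first establishes radial symmetry (exactly as you do, via uniqueness under $O(n)$-conjugation), then reduces to the one-dimensional radial equation for $v^*(s,t)=u^*(x,t)$, $s=|x|$, differentiates it in $s$, and observes that $w^*:=v^*_s$ solves a uniformly parabolic \emph{Dirichlet} problem on $(0,R)$ with $w^*(0,t)=w^*(R,t)=0$; the extra zeroth-order term $-\sigma^*(w^*)\tfrac{n-1}{s^2}$ has the right sign at an interior extremum, so the $1$D maximum principle gives $\max|w^*|=\max|w^*(\cdot,0)|$ directly, with no boundary-derivative analysis needed. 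You instead run the $n$-dimensional Bernstein argument on $v:=|Du^*|^2$, obtaining a parabolic differential inequality with no zeroth-order term and then showing $\partial_{\mathbf n}v=-\tfrac{2}{R}|Du^*|^2\le 0$ on $\partial B_R$ from tangential differentiation of the Neumann condition and the curvature of the sphere, so the Hopf lemma excludes a lateral-boundary maximum. The paper's route is shorter once radial symmetry is in hand and sidesteps the Hopf lemma and the corner/boundary bookkeeping entirely (the singular coefficient at $s=0$ is harmless because $w^*(0,t)=0$ and $u^*$ is smooth through the origin); your Bernstein route is heavier but more robust, since it never uses radial symmetry for the gradient bound and would survive for any smooth convex domain, not just a ball. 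Your eigenvalue computation $\mathrm{spec}(a_{ij}(\xi))=\{a^*(|\xi|^2),(\sigma^*)'(|\xi|)\}$ and the deduction $c\le a^*\le C$ from integrating (\ref{para-0}) with $\sigma^*(0)=0$ are correct and make explicit the uniform parabolicity that the paper simply asserts before citing the standard quasilinear Neumann theory.
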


\begin{proof} By (\ref{para-0}) and the positivity of $a^*$, the problem (\ref{2-2-2}) is uniformly parabolic. Existence and uniqueness of  classical solution to problem (\ref{2-2-2}) are standard for parabolic equations \cite{ladyzenskaja67, lieberman96}. We only include a proof for the radial symmetry and maximum principle.  In the case $n=1$, the radial symmetry (i.e., $u^*(-x,t)=u^*(x,t)$) is easy and the maximum principle is also standard; so let us assume $n\ge 2.$   We first show that the solution $u^*$ is radially symmetric in $x$ on $\bar{\Omega}_T$. Suppose on the contrary that there exist two distinct points $x^0,y^0\in\Omega$ with $|x^0|=|y^0|$ and a time $t^0\in(0,T)$ such that
$
u^*(x^0,t^0)\neq u^*(y^0,t^0).
$
We can choose a  matrix $A\in O(n)$ such that $y^0=Ax^0$, where $x^0$, $y^0$ are regarded as column vectors. Define
$$
\tilde{u}^*(x,t):=u^*(Ax,t)\,\,\,\,\,\,\forall(x,t)\in\bar{\Omega}_T.
$$
Then it is straightforward to check  that $\tilde{u}^*\in C^{3+\alpha,1+\alpha/2}(\bar{\Omega}_T)$ solves the problem (\ref{2-2-2}). But
$$
\tilde{u}^*(x^0,t^0)=u^*(Ax^0,t^0)=u^*(y^0,t^0)\neq u^*(x^0,t^0),
$$
and this is a contradiction to the uniqueness of solution of (\ref{2-2-2}). Thus $u^*$ is radially symmetric in $\bar{\Omega}_T$. Note that $Du^*(0,t)=0$ for all $t\in[0,T]$ by the radial symmetry and differentiability of $u^*$ and that $Du^*(x,t)=0$ for every $(x,t)\in\partial \Omega\times[0,T]$ by the Neumann boundary condition and the radial symmetry of $u^*$. Next, we establish the maximum principle
\begin{equation}\label{6-2-0}
\max_{\bar\Omega_T}|Du^*|=\max_{\bar\Omega}|Du_0|.
\end{equation}
Let $v^*(s,t)=u^*(x,t)$, where $s=|x|.$ Then $|v^*_s(s,t)|=|Du^*(x,t)|$ with $s=|x|$ and hence $v^*_s(0,t)=v^*_s(R,t)=0$ for all $t\in [0,T].$ Similarly as in the introduction (or see (\ref{3-3-0}) below), the function $v^*$ solves the equation:
\[
v^*_t=(\sigma^*(v^*_s))_s+\sigma^*(v^*_s)\frac{n-1}{s}\,\,\,\,\,\,\textrm{in}\,\,(0,R)\times(0,T).
\]
Let $w^*=v^*_s.$ Then $w^*$ solves the following equation in $(0,R)\times(0,T)$
\begin{equation}\label{6-2}
\begin{cases} w^*_t=(\sigma^*) '(w^*) w^*_{ss} + (\sigma^*) ''(w^*)(w^*_{s})^2+ (\sigma^*) '(w^*)w^*_s \frac{n-1}{s} -\sigma^*(w^*)\frac{n-1}{s^2},\\
w^*(0,t)=w^*(R,t)=0\quad \forall \, t\in [0,T].
\end{cases}
\end{equation}
It is then  easy to show that
\[
\max_{[0,R]\times [0,T]} |w^*|=\max_{[0,R]} |w^*(\cdot,0)|.
\]
(The presence of the term $-\sigma^*(w^*)\frac{n-1}{s^2}$ in (\ref{6-2}) makes the proof much easier.) From this,  (\ref{6-2-0}) follows.
\end{proof}


\section{Basic setup and the density theorem}\label{s2-3}

In this section, we rephrase the problem (\ref{1-1}) into the frame work of partial differential inclusion (\ref{di-0}) with the set $\Sigma(s,v)$ replaced by a specific compact set $K^m_{\lambda,l_0}(s,v)$ with $m=n-1$, and then we present our main density result, Theorem \ref{2-3-1}, that is closely related to the reduction principle \cite{muller01} or relaxation property \cite{dacorogna08}. To this end, we set up the relevant definitions and prove some lemmas building up on the definitions that are to be used in the proofs of Theorem \ref{1-8} and Theorem \ref{2-3-1}. In doing so, we try to separate the arguments from these theorems to make our presentation as clear as possible.

\subsection{Several useful sets}
In what follows, let $\sigma(p)=a(p^2)p$  be  defined as above.
  It follows from (\ref{1-2}) that for each $q\in (0,\sigma(1))$, there are exactly two $p^+_q,\,p^-_q\in\RR$ such that
\begin{equation*}
 0<p^-_q<1<p^+_q, \quad
  \sigma(p^{\pm}_q)=q. \quad \mbox{(See Figure \ref{fig1}.)  }
\end{equation*}
For  each $\lambda>1$, let $\lambda^-:=p^-_{\sigma(\lambda)}$, and define the  sets
\begin{eqnarray}\label{2-3-3}
\tilde{K}_\lambda &:=& \{(p,\sigma(p))\in\RR^2:|p|\leq\lambda\},\nonumber\\
\tilde{U}^+_\lambda &:=& \{(p,q)\in\RR^2:\sigma(\lambda)<q<\sigma(1),\,p^-_q<p<p^+_q\},\\
\tilde{U}^-_\lambda &:=& \{(p,q)\in\RR^2:(-p,-q)\in\tilde{U}^+_\lambda\}.\nonumber
\end{eqnarray}
(See Figure \ref{fig2}.)
We begin with  the following technical lemma whose proof can be found in   \cite[Lemma 3.1]{zhang06}.

\begin{lem}\label{2-3-2}
Let $\lambda>1$ and $\lambda^-<M<\lambda$. Then, there exists an odd function $\sigma^*\in C^{2,\alpha}(\RR)$ satisfying the following:
\begin{itemize}
\item[(a)] $\sigma^*(p)=\sigma(p)$ for $0\leq p\leq\lambda^-$, $\sigma^*(p)<\sigma(M)$ for $\lambda^-<p\leq M$, and
\item[(b)] there exist two constants $C\geq c>0$ such that
    $$
    c\leq(\sigma^*)'(p)\leq C\,\,\textrm{ for every }p\geq 0.
    $$
\end{itemize}
\end{lem}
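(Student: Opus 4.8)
The plan is to leave $\sigma$ untouched on $[0,\lambda^-]$, where it is still strictly increasing with a positive derivative, and for $p\ge\lambda^-$ to splice on a $C^{2,\alpha}$ extension that is strictly increasing, eventually affine with a very small slope, and whose total increase over $[\lambda^-,M]$ is strictly less than the gap $\delta:=\sigma(M)-\sigma(\lambda^-)$; an odd extension to $(-\infty,0]$ then completes the construction.

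First I would record the elementary facts that make this work. Since $\lambda>1$ we have $\lambda^-=p^-_{\sigma(\lambda)}\in(0,1)$ and $\sigma(\lambda^-)=\sigma(\lambda)$. From (\ref{1-2}), $\sigma$ is strictly increasing on $[0,1]$ and strictly decreasing on $[1,\infty)$; using $\lambda^-<M<\lambda$ and treating the cases $M\le 1$ and $M>1$ separately, one gets $\sigma(M)>\sigma(\lambda^-)$ in each case, so $\delta>0$. Also $\sigma'$ is continuous and strictly positive on the compact interval $[0,\lambda^-]$, so $c_1\le\sigma'\le C_1$ there for some $0<c_1\le C_1$; finally $\sigma(p)=a(p^2)p$ with $a\in C^{2,\alpha}([0,\infty))$ is already odd and lies in $C^{2,\alpha}(\RR)$.

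Next I would construct the modified derivative. Fix small parameters $\eta>0$ and $c_0>0$ (to be chosen) and let $g\colon[0,\infty)\to(0,\infty)$ be a function that equals $\sigma'$ on $[0,\lambda^-]$, equals the constant $c_0$ on $[\lambda^-+\eta,\infty)$, satisfies $g(\lambda^-)=\sigma'(\lambda^-)$, $g'(\lambda^-)=\sigma''(\lambda^-)$, and stays in $[c_0,C_1+1]$ on the short transition interval $[\lambda^-,\lambda^-+\eta]$; such $g\in C^{1,\alpha}([0,\infty))$ exists by a standard smooth interpolation of the prescribed one-jets at the two endpoints of the transition interval, the overshoot caused by $\sigma''(\lambda^-)$ being $O(\eta)$ and hence absorbable. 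Then set
\[
\sigma^*(p):=\begin{cases}\sigma(p), & 0\le p\le\lambda^-,\\[1mm] \sigma(\lambda^-)+\displaystyle\int_{\lambda^-}^{p} g(t)\,dt, & p\ge\lambda^-,\end{cases}
\]
and extend by $\sigma^*(-p):=-\sigma^*(p)$. Matching of value, first and second derivative at $p=\lambda^-$ together with $g\in C^{1,\alpha}$ gives $\sigma^*\in C^{2,\alpha}([0,\infty))$, and since $\sigma^*=\sigma$ near $0$ with $\sigma$ odd and $C^{2,\alpha}$, the odd extension lies in $C^{2,\alpha}(\RR)$. Property (b) is then immediate with $c:=\min(c_0,c_1)>0$ and $C:=C_1+1$, since $(\sigma^*)'=g$ on $[0,\infty)$. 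For (a), $\sigma^*=\sigma$ on $[0,\lambda^-]$ by construction; for $\lambda^-<p\le M$, strict monotonicity of $\sigma^*$ and the estimate
\[
\sigma^*(p)\le\sigma^*(M)\le\sigma(\lambda^-)+(C_1+1)\eta+(M-\lambda^-)c_0
\]
show that, after choosing $\eta$ so small that $(C_1+1)\eta<\delta/2$ (and $\eta<M-\lambda^-$) and then $c_0$ so small that $(M-\lambda^-)c_0<\delta/2$, we get $\sigma^*(p)<\sigma(\lambda^-)+\delta=\sigma(M)$, which is exactly (a).

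The only place needing mild care — and the step I would flag as the main (purely technical) obstacle — is producing the transition profile $g$ on $[\lambda^-,\lambda^-+\eta]$ that simultaneously matches the prescribed one-jet $(\sigma'(\lambda^-),\sigma''(\lambda^-))$ of $\sigma$ at the left end, is $C^{1,\alpha}$ and equal to $c_0$ near the right end, and stays positive and bounded by a constant independent of $\eta$; rescaling a fixed smooth cutoff and absorbing the $O(\eta)$ overshoot coming from $\sigma''(\lambda^-)$ into the bound $C_1+1$ settles this. Everything else is bookkeeping; this argument is essentially the one in \cite[Lemma 3.1]{zhang06}.
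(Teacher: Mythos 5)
Your construction is correct, and it is the natural one: keep $\sigma'$ on $[0,\lambda^-]$, replace it by a small positive constant past a short transition zone, and integrate from $\sigma(\lambda^-)$. For the record, the paper does not actually prove this lemma itself; it defers entirely to \cite[Lemma 3.1]{zhang06}, so there is no "paper's proof" to compare against line by line, but the approach you take is the standard one and is what that reference does.

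Two small remarks on the one technical step you flag. First, the transition profile $g$ on $[\lambda^-,\lambda^-+\eta]$ can be produced without any overshoot bookkeeping by taking a convex combination: fix $\chi\in C^\infty([0,1])$ with $\chi(0)=1$, $\chi(1)=0$, $\chi'(0)=\chi'(1)=0$, $0\le\chi\le1$, and set
\[
g(p)=\chi\!\left(\tfrac{p-\lambda^-}{\eta}\right)\sigma'(p)+\left(1-\chi\!\left(\tfrac{p-\lambda^-}{\eta}\right)\right)c_0,\qquad p\in[\lambda^-,\lambda^-+\eta].
\]
Then $g$ automatically lies between $\min\{c_0,\min_{[\lambda^-,\lambda^-+\eta]}\sigma'\}$ and $\max\{c_0,\max_{[\lambda^-,\lambda^-+\eta]}\sigma'\}$ (all positive once $\eta$ is small enough that $\sigma'>0$ on $[\lambda^-,\lambda^-+\eta]$), matches the one-jet of $\sigma'$ at $\lambda^-$ and of the constant at $\lambda^-+\eta$ because $\chi'$ vanishes at the endpoints, and is $C^{1,\alpha}$ since $C^{1,\alpha}$ is an algebra and $\chi$ is smooth. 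Second, your claim that one-jet matching at a single splice point suffices for $C^{1,\alpha}$ gluing is indeed correct: if $g'$ is $\alpha$-H\"older on each side of $\lambda^-$ and the one-sided values of $g,g'$ agree there, then for $x<\lambda^-<y$ one gets $|g'(x)-g'(y)|\le|g'(x)-g'(\lambda^-)|+|g'(\lambda^-)-g'(y)|\lesssim|x-\lambda^-|^\alpha+|\lambda^--y|^\alpha\le 2|x-y|^\alpha$, so the glued function is $C^{1,\alpha}$. With these points in place, your verification of (a) and (b) — strict monotonicity giving $\sigma^*>\sigma(\lambda)$ past $\lambda^-$, the total increase over $[\lambda^-,M]$ controlled by $(C_1+1)\eta+(M-\lambda^-)c_0<\delta$, and the odd extension being $C^{2,\alpha}$ because $\sigma^*=\sigma$ near $0$ — is complete.
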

We remark that the function $\sigma^*$ depends on $\lambda$ and $M$.

 Let $m\geq 0$ be a fixed integer and $\lambda>1$ in the rest of this section. Here let us keep in mind that $m=n-1$ in our application, where $n$ is the space dimension in Theorem \ref{1-8}. For each $s>0$, define
\begin{eqnarray}\label{2-3-4}
\tilde{K}^m_\lambda(s) &:=& \{(p,s^mq)\in\RR^2:(p,q)\in\tilde{K}_\lambda\},\nonumber \\
\tilde{U}^m_\lambda(s) &:=& \{(p,s^mq)\in\RR^2:(p,q)\in\tilde{U}^+_\lambda\cup\tilde{U}^-_\lambda\},\\
I^m_\lambda(s,p) &:=& \left\{ \begin{array}{ll}
                            (s^m\sigma(\lambda),s^m\sigma(p))\subset\RR & \textrm{if }\lambda^-<p<\lambda \\
                            (s^m\sigma(p),-s^m\sigma(\lambda))\subset\RR & \textrm{if }-\lambda<p<-\lambda^-.
                          \end{array} \nonumber
 \right.
\end{eqnarray}
Given any $l_0>0$, for $s>0$ and $v\in\RR$, define the sets in $\RR^{2\times 2}$:
\begin{eqnarray}
K^m_{\lambda,l_0}(s,v) &:=& \left\{\left(
                           \begin{array}{cc}
                             p & l \\
                             s^m v & s^m q \\
                           \end{array}
                         \right)
\in\RR^{2\times2}:(p,q)\in\tilde{K}_\lambda,\,|l|\leq l_0 \right\}, \label{2-3-4-k}\\
U^m_{\lambda,l_0}(s,v) &:=& \left\{\left(
                           \begin{array}{cc}
                             p & l \\
                             s^m v & s^m q \\
                           \end{array}
                         \right)
\in\RR^{2\times2}:(p,q)\in\tilde{U}^+_\lambda\cup\tilde{U}^-_\lambda,\,|l|< l_0 \right\}.\label{2-3-4-u}
\end{eqnarray}
We also let $l_0>0$ be fixed throughout the rest of this section.

\begin{figure}[h]
\begin{center}
\includegraphics[scale=0.6]{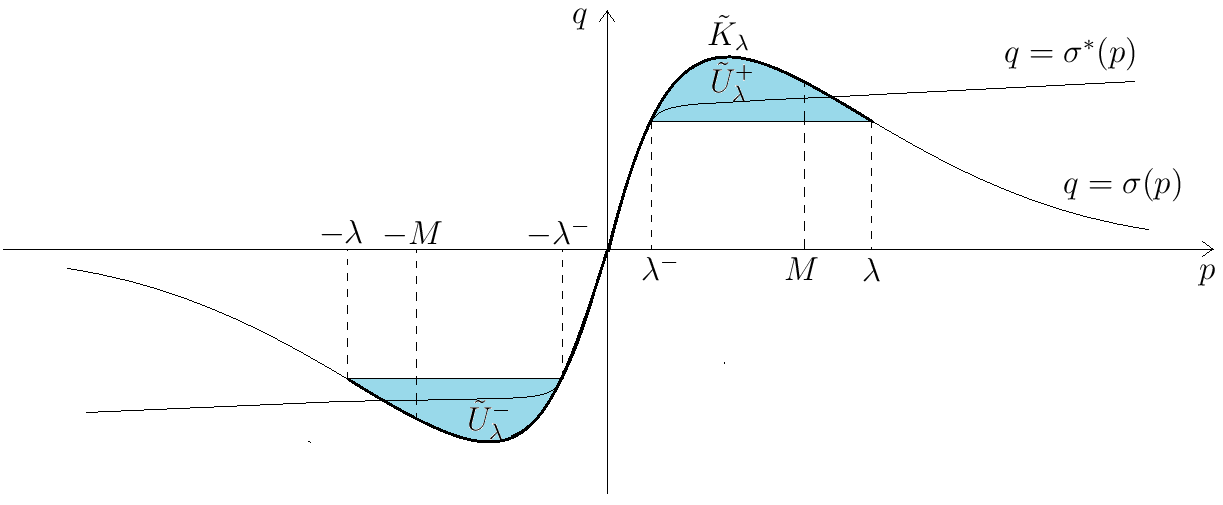}
\end{center}
\caption{The graph of a new function $q=\sigma^*(p)$ from Lemma \ref{2-3-2}}
\label{fig2}
\end{figure}

\subsection{Properties of some distance functions}
The following four lemmas are basically on the reformulations of some (inhomogeneous) distance functions into simpler expressions that we can easily manage for the proof of the density result, Theorem \ref{2-3-1}.
\begin{lem}\label{2-3-9}
Let $s>0$. Then for each $(p,q')\in\RR^2$,
$$
(p,q')\in\tilde{U}^m_\lambda(s)\,\,\,\textrm{ if and only if }\,\,\,p\in(-\lambda,-\lambda^-)\cup(\lambda^-,\lambda),\,q'\in I^m_\lambda(s,p).
$$
\end{lem}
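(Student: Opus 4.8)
\textbf{Proof proposal for Lemma \ref{2-3-9}.}

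The plan is to unwind the definitions and track what happens when the scaling $q\mapsto s^m q$ is applied to the second coordinate. By \eqref{2-3-4}, a point $(p,q')\in\RR^2$ lies in $\tilde U^m_\lambda(s)$ precisely when $(p,q'/s^m)\in \tilde U^+_\lambda\cup\tilde U^-_\lambda$, so I would first reduce to characterizing membership in $\tilde U^+_\lambda\cup\tilde U^-_\lambda$ and then rescale at the very end. For $\tilde U^+_\lambda$, definition \eqref{2-3-3} says $(p,q)\in\tilde U^+_\lambda$ iff $\sigma(\lambda)<q<\sigma(1)$ and $p^-_q<p<p^+_q$; for $\tilde U^-_\lambda$ one reflects through the origin. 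The first step is therefore to show that the conditions ``$\sigma(\lambda)<q<\sigma(1)$ and $p^-_q<p<p^+_q$'' are equivalent to ``$p\in(\lambda^-,\lambda)$ and $q\in(\sigma(\lambda),\sigma(p))$,'' i.e.\ to eliminate the auxiliary variable descriptions $p^\pm_q$ in favor of the range of $p$ alone.

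The key monotonicity facts I would invoke, all consequences of \eqref{1-2} and the definition $\lambda^-=p^-_{\sigma(\lambda)}$: on the branch $p\in[0,1]$, $\sigma$ is strictly increasing (since $2p\,a'(p)+a(p)>0$ there forces $\sigma'(p)>0$ for $0\le p<1$, with $\sigma'(1)=0$), so $q\mapsto p^-_q$ is a strictly increasing bijection from $(0,\sigma(1))$ onto $(0,1)$; on the branch $p>1$, $\sigma$ is strictly decreasing (since $\sigma'(p)<0$ for $p>1$), so $q\mapsto p^+_q$ is a strictly decreasing bijection from $(0,\sigma(1))$ onto $(1,\infty)$. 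Now fix $p\in(\lambda^-,\lambda)$. I claim the set of $q$ with $(p,q)\in\tilde U^+_\lambda$ is exactly $I^m_\lambda(s,p)$'s unscaled version $(\sigma(\lambda),\sigma(p))$. Indeed, the constraint $p^-_q<p$ is equivalent, by strict monotonicity of $q\mapsto p^-_q$ and the fact that $\sigma$ restricted to $[0,1]$ inverts it, to $q<\sigma(p)$ when $p\le 1$; and $p^-_q<p$ holds automatically (with room to spare) when $p>1$, since $p^-_q<1<p$. Symmetrically, $p<p^+_q$ is equivalent to $\sigma(p)>q$ when $p>1$ (using that $\sigma$ restricted to $[1,\infty)$ inverts $q\mapsto p^+_q$ and is decreasing), and is automatic when $p\le 1$ since $p\le 1<p^+_q$. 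Combining, for any $p\in(\lambda^-,\lambda)$ the pair of strict inequalities $p^-_q<p<p^+_q$ collapses to the single condition $q<\sigma(p)$. Intersecting with $\sigma(\lambda)<q<\sigma(1)$ and noting $\sigma(p)\le\sigma(1)$ with equality only at $p=1$ (so the constraint $q<\sigma(1)$ is implied by $q<\sigma(p)$ except possibly at $p=1$, where it still gives $q<\sigma(1)=\sigma(p)$), we get $q\in(\sigma(\lambda),\sigma(p))$. Conversely, if $p\in(\lambda^-,\lambda)$ and $q\in(\sigma(\lambda),\sigma(p))$ then running the same equivalences backward yields $(p,q)\in\tilde U^+_\lambda$. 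One also checks the edge case: if $p\le\lambda^-$ then $\sigma(p)\le\sigma(\lambda^-)=\sigma(\lambda)$, so $(\sigma(\lambda),\sigma(p))=\emptyset$ and no $q$ works, consistent with $p^-_q<p$ failing; if $p\ge\lambda$ similarly nothing works. Thus $(p,q)\in\tilde U^+_\lambda$ iff $p\in(\lambda^-,\lambda)$ and $q\in(\sigma(\lambda),\sigma(p))$, which is exactly the first alternative in the definition of $I^m_\lambda(s,p)$ after rescaling by $s^m$. The analysis of $\tilde U^-_\lambda$ is identical after the reflection $(p,q)\mapsto(-p,-q)$ and gives $p\in(-\lambda,-\lambda^-)$, $q\in(-\sigma(\lambda),-\sigma(-p))=(\sigma(-p)\text{-type interval})$; being careful with signs, $(p,q)\in\tilde U^-_\lambda$ iff $p\in(-\lambda,-\lambda^-)$ and $q\in(\sigma(p),-\sigma(\lambda))$, matching the second alternative of $I^m_\lambda(s,p)$.

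Finally I would reintroduce the scaling: $(p,q')\in\tilde U^m_\lambda(s)$ iff $(p,q'/s^m)$ satisfies the above, i.e.\ iff $p\in(-\lambda,-\lambda^-)\cup(\lambda^-,\lambda)$ and $q'/s^m$ lies in the appropriate unscaled interval, i.e.\ iff $q'\in I^m_\lambda(s,p)$ (since multiplying the interval endpoints by $s^m>0$ is exactly the definition \eqref{2-3-4} of $I^m_\lambda(s,p)$). This completes the equivalence. The only mildly delicate point — the main obstacle, such as it is — is handling the behavior at and near $p=1$, where $\sigma'(1)=0$ and the two monotone branches meet: one must verify that the ``automatic'' inequalities $p^-_q<1<p$ or $p<1<p^+_q$ are strict and that no degenerate overlap of the intervals $(\sigma(\lambda),\sigma(p))$ with the admissible range of $q$ is lost. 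All of this is immediate from the strict sign conditions in \eqref{1-2} away from $p=1$ together with continuity at $p=1$, so no genuine difficulty arises; the bulk of the write-up is simply bookkeeping of the two sign cases.
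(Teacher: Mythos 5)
Your proposal is correct and follows the same route as the paper's proof: rescale by $s^{-m}$ to reduce to $\tilde U^\pm_\lambda$, then show that the defining conditions ``$\sigma(\lambda)<q<\sigma(1)$, $p^-_q<p<p^+_q$'' are equivalent to ``$p\in(\lambda^-,\lambda)$, $q\in(\sigma(\lambda),\sigma(p))$'', and reflect for $\tilde U^-_\lambda$. The paper simply asserts this unscaled reparametrization without proof (and omits the converse direction entirely), whereas you supply the monotonicity argument — $\sigma$ strictly increasing on $[0,1]$, strictly decreasing on $(1,\infty)$, so $q\mapsto p^\mp_q$ are monotone bijections inverting the two branches — together with the bookkeeping at $p=1$ and the $\tilde U^-_\lambda$ sign-flip; this is exactly the content the paper takes as obvious, so your write-up is a more detailed version of the same argument rather than a different route.
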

\begin{proof}
Let $(p,q')\in\RR^2$. Assume $(p,q')\in\tilde{U}^m_\lambda(s)$. Put $q:=s^{-m}q'$. Then $(p,q)\in\tilde{U}^+_\lambda\cup\tilde{U}^-_\lambda$. If $(p,q)\in\tilde{U}^+_\lambda$, then $\lambda^-<p<\lambda$ and $\sigma(\lambda)<q<\sigma(p)$, and so $s^m\sigma(\lambda)<q'=s^mq<s^m\sigma(p)$, i.e., $q'\in I^m_\lambda(s,p).$ If $(p,q)\in\tilde{U}^-_\lambda$, then $-\lambda<p<-\lambda^-$ and $\sigma(p)<q<-\sigma(\lambda)$, and so $s^m\sigma(p)<q'=s^mq<-s^m\sigma(\lambda)$, i.e., $q'\in I^m_\lambda(s,p).$ The converse statement is also easy, and we omit it.
\end{proof}

For each $v'\in\RR$, define
\[
W_{v'}:=\left\{\left(
              \begin{array}{cc}
                a & b \\
                v' & d \\
              \end{array}
            \right)\in\RR^{2\times2}:a,b,d\in\RR\right\}.
\]
If $K\subset W_{v'}$, let $\partial|_{W_{v'}}K$ denote the relative boundary of $K$ in $W_{v'}$.
Let $W:=W_0$, and let $P_W$ be the projection of $\RR^{2\times2}$ onto $W$, that is,
$$
P_W\left(\left(
     \begin{array}{cc}
       a & b \\
       c & d \\
     \end{array}
   \right)\right)=\left(
     \begin{array}{cc}
       a & b \\
       0 & d \\
     \end{array}
   \right)\quad \forall \left(
     \begin{array}{cc}
       a & b \\
       c & d \\
     \end{array}
   \right)\in\RR^{2\times 2}.
$$
For example, $K^m_{\lambda,l_0}(s,v),\,U^m_{\lambda,l_0}(s,v)\subset W_{s^mv}$, where $s>0$ and $v\in\RR$.

\begin{lem}\label{2-3-7}
Let $s>0$ and $v\in\RR$, and let
$$
A=\left(
     \begin{array}{cc}
       \tilde{p} & \tilde{l} \\
       s^mv & \tilde{q}' \\
     \end{array}
   \right)\in\RR^{2\times 2}.
$$
Then
$$
\mathrm{dist}(A,K^m_{\lambda,l_0}(s,v)\cup \partial|_{W_{s^mv}}U^m_{\lambda,l_0}(s,v))=\mathrm{dist}(P_W(A),K^m_{\lambda,l_0}(s,0)\cup \partial|_{W}U^m_{\lambda,l_0}(s,0)).
$$
\end{lem}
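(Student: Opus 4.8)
The key observation is that both sets appearing on the left-hand side live inside the affine subspace $W_{s^mv}$, and that translation by a fixed matrix is an isometry of $\RR^{2\times 2}$ (with the Hilbert--Schmidt norm). The plan is to reduce everything to a translation in the $(2,1)$-entry. Write $E_{21}$ for the matrix with a $1$ in the $(2,1)$-slot and zeros elsewhere, so that the map $T\colon B\mapsto B-s^mv\,E_{21}$ is an isometry of $\RR^{2\times2}$ carrying $W_{s^mv}$ onto $W=W_0$. From the defining formulas (\ref{2-3-4-k}) and (\ref{2-3-4-u}) one reads off directly that $T(K^m_{\lambda,l_0}(s,v))=K^m_{\lambda,l_0}(s,0)$ and $T(U^m_{\lambda,l_0}(s,v))=U^m_{\lambda,l_0}(s,0)$; since $T$ is a homeomorphism of $W_{s^mv}$ onto $W$ it also carries the relative boundary $\partial|_{W_{s^mv}}U^m_{\lambda,l_0}(s,v)$ onto $\partial|_{W}U^m_{\lambda,l_0}(s,0)$. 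Hence, because isometries preserve distances,
\[
\mathrm{dist}(A,K^m_{\lambda,l_0}(s,v)\cup\partial|_{W_{s^mv}}U^m_{\lambda,l_0}(s,v))
=\mathrm{dist}(T(A),K^m_{\lambda,l_0}(s,0)\cup\partial|_{W}U^m_{\lambda,l_0}(s,0)).
\]

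It remains to replace $T(A)$ by $P_W(A)$. Here the point is that although $T(A)$ need not equal $P_W(A)$ in general (they differ when $\tilde p$ or the other entries are such that $A\notin W_{s^mv}$ — but in fact $A$ as written has $(2,1)$-entry exactly $s^mv$, so $A\in W_{s^mv}$ and $T(A)=P_W(A)$), one simply notes that the matrix $A$ in the statement is already of the form with $(2,1)$-entry $s^mv$, i.e. $A\in W_{s^mv}$; therefore $T(A)=A-s^mv\,E_{21}=P_W(A)$ exactly. Substituting gives the claimed identity. So the whole argument is: (1) identify the correct translation isometry; (2) check it maps the three relevant sets to their $v=0$ counterparts, using that a homeomorphism sends relative boundary to relative boundary; (3) observe $A$ lies in $W_{s^mv}$ so that its image is precisely $P_W(A)$.

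I do not expect any serious obstacle here — this is a bookkeeping lemma. The only point requiring a moment's care is step (2): one must confirm that $T$ really is a bijection \emph{of $W_{s^mv}$ onto $W$} (not merely of $\RR^{2\times2}$ onto itself fixing these subspaces setwise in some looser sense), so that it genuinely transports the \emph{relative} topology, hence relative boundaries; this is immediate since $T$ is a global isometry and $T(W_{s^mv})=W$. A secondary, purely notational subtlety is that $\partial|_{W_{s^mv}}$ was defined only for subsets $K\subset W_{v'}$, so one should remark at the outset that $U^m_{\lambda,l_0}(s,v)\subset W_{s^mv}$ (as already noted in the excerpt right before the lemma), making the notation meaningful. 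With these two remarks in place the proof is a two-line computation.
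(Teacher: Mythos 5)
Your proof is correct and takes essentially the same approach as the paper's: both exploit the fact that shifting the $(2,1)$-entry from $s^mv$ to $0$ is an isometry of $W_{s^mv}$ onto $W$ carrying the relevant sets to their $v=0$ counterparts, and that $A$ already lies in $W_{s^mv}$. You phrase this via the explicit translation $T(B)=B-s^mv\,E_{21}$, while the paper works directly with $P_W$ and the observation that $|P_W(X)-P_W(Y)|=|X-Y|$ whenever $X_{21}=Y_{21}$; since $T$ and $P_W$ agree on $W_{s^mv}$, the two computations are identical, though your framing makes the relative-boundary step slightly more transparent.
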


\begin{proof}
Observe that
$$
K^m_{\lambda,l_0}(s,0)\cup \partial|_{W}U^m_{\lambda,l_0}(s,0)=\{P_W(X)\in W:X\in K^m_{\lambda,l_0}(s,v)\cup \partial|_{W_{s^mv}}U^m_{\lambda,l_0}(s,v)\}
$$
and that if $X,\,Y\in\RR^{2\times2}$ are such that $X_{21}=Y_{21}$, then $|P_W(X)-P_W(Y)|=|X-Y|$.
Thus
$$
\mathrm{dist}(A,K^m_{\lambda,l_0}(s,v)\cup \partial|_{W_{s^mv}}U^m_{\lambda,l_0}(s,v))
$$
$$=
\inf\{|A-X|:X\in K^m_{\lambda,l_0}(s,v)\cup \partial|_{W_{s^mv}}U^m_{\lambda,l_0}(s,v)\}
$$
$$
=\inf\{|P_W(A)-P_W(X)|:X\in K^m_{\lambda,l_0}(s,v)\cup \partial|_{W_{s^mv}}U^m_{\lambda,l_0}(s,v)\}
$$
$$
=\mathrm{dist}(P_W(A),K^m_{\lambda,l_0}(s,0)\cup \partial|_{W}U^m_{\lambda,l_0}(s,0)).
$$
\end{proof}

\begin{lem}\label{2-3-8}
Let $F\subset\RR_+\times\RR$ be a compact set, where $\RR_+:=\{s\in\RR:s>0\}$.
If $T:F\RA W$ is a continuous mapping, then the mapping $d:F\RA[0,\infty)$, defined by
$$
d(s,t):=\mathrm{dist}(T(s,t),K^m_{\lambda,l_0}(s,0)\cup \partial|_{W}U^m_{\lambda,l_0}(s,0))\,\,\,\forall (s,t)\in F,
$$
is also continuous.
\end{lem}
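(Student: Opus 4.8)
The plan is to realize the $s$-dependent target set $K^m_{\lambda,l_0}(s,0)\cup\partial|_W U^m_{\lambda,l_0}(s,0)$ as the image of a single fixed compact set under an $s$-dependent linear scaling of $W$, and then to combine the two elementary Lipschitz estimates for the distance function: stability under perturbing the base point and stability under perturbing the target set.

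\textbf{Step 1 (reduction to a fixed set).} For $r>0$ let $\Psi_r\colon W\RA W$ denote the linear automorphism multiplying the $(2,2)$-entry of a matrix in $W$ by $r$ and leaving the $(1,1)$- and $(1,2)$-entries unchanged; $\Psi_r$ is a homeomorphism of $W$ with inverse $\Psi_{1/r}$. Set $E:=K^m_{\lambda,l_0}(1,0)\cup\partial|_W U^m_{\lambda,l_0}(1,0)$. Reading off (\ref{2-3-4-k})--(\ref{2-3-4-u}), for every $s>0$ one has $K^m_{\lambda,l_0}(s,0)=\Psi_{s^m}(K^m_{\lambda,l_0}(1,0))$ and $U^m_{\lambda,l_0}(s,0)=\Psi_{s^m}(U^m_{\lambda,l_0}(1,0))$. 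Since a homeomorphism of $W$ commutes with the relative-boundary operation $\partial|_W$, this yields
\[
K^m_{\lambda,l_0}(s,0)\cup\partial|_W U^m_{\lambda,l_0}(s,0)=\Psi_{s^m}(E)\qquad\textrm{for all }s>0,
\]
so that $d(s,t)=\mathrm{dist}(T(s,t),\Psi_{s^m}(E))$. Moreover $E$ is nonempty and compact: $\tilde K_\lambda$ is compact, while $\tilde U^+_\lambda\cup\tilde U^-_\lambda$ is a nonempty bounded open subset of $\RR^2$, so $U^m_{\lambda,l_0}(1,0)$ is relatively open, bounded and nonempty in $W$ and hence has compact relative boundary. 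In particular $c_E:=\max_{X\in E}|X_{22}|<\infty$.

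\textbf{Step 2 (Lipschitz estimate and conclusion).} Fix $(s,t),(s',t')\in F$. For every $X\in E$ we have $\Psi_{(s')^m}(X)\in\Psi_{(s')^m}(E)$, hence
\[
d(s',t')\le|T(s',t')-\Psi_{(s')^m}(X)|\le|T(s',t')-T(s,t)|+|T(s,t)-\Psi_{s^m}(X)|+|\Psi_{s^m}(X)-\Psi_{(s')^m}(X)|.
\]
Choosing $X\in E$ to minimize $|T(s,t)-\Psi_{s^m}(X)|$ (a minimizer exists since $\Psi_{s^m}(E)$ is compact) and using $|\Psi_{s^m}(X)-\Psi_{(s')^m}(X)|=|s^m-(s')^m|\,|X_{22}|\le c_E\,|s^m-(s')^m|$, we obtain
\[
d(s',t')\le d(s,t)+|T(s,t)-T(s',t')|+c_E\,|s^m-(s')^m|.
\]
Interchanging $(s,t)$ and $(s',t')$ gives $|d(s,t)-d(s',t')|\le|T(s,t)-T(s',t')|+c_E\,|s^m-(s')^m|$. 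Since $T$ is continuous on $F$ and $s\mapsto s^m$ is continuous on $(0,\infty)$, the right-hand side tends to $0$ as $(s',t')\to(s,t)$ in $F$; hence $d$ is continuous on $F$ (and in fact uniformly continuous, $F$ being compact).

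The only step requiring care is the identification in Step 1: that the $s$-dependent sets are honestly $\Psi_{s^m}$-images of one fixed set and that $\Psi_{s^m}$, being a linear homeomorphism of $W$, carries $\partial|_W U^m_{\lambda,l_0}(1,0)$ onto $\partial|_W U^m_{\lambda,l_0}(s,0)$. Everything afterwards is the routine stability of the distance function under joint perturbation of the point and of the compact target set.
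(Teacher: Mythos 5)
Your proposal is correct and is essentially the paper's own argument, repackaged: the paper picks a minimizer in the target set at $s_1$ and shows that rescaling its $(2,2)$-entry by $(s_2/s_1)^m$ lands in the target set at $s_2$, which is exactly your $\Psi_{s^m}(E)=\Psi_{(s')^m}(E)\circ\Psi_{(s/s')^m}$ observation, and both proofs then conclude with the triangle inequality and the uniform continuity of $T$ and $s\mapsto s^m$ on the compact set. Introducing the linear automorphism $\Psi_r$ and noting it commutes with $\partial|_W$ makes the scaling step more transparent (and your $c_E$ is the paper's $\sigma(1)$), but it is a presentational gain, not a different route.
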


\begin{proof}
Let $\epsilon>0$. By the uniform continuity of $T$ on $F$, there exists a $\delta>0$ such that
$$
|T(s_1,t_1)-T(s_2,t_2)|\leq\frac{\epsilon}{2}
$$
whenever $(s_1,t_1),(s_2,t_2)\in F$, $|(s_1,t_1)-(s_2,t_2)|\leq\delta$.
Fix any two $(s_1,t_1),(s_2,t_2)\in F$ with $|(s_1,t_1)-(s_2,t_2)|\leq\delta$.
Since $K^m_{\lambda,l_0}(s_1,0)\cup \partial|_{W}U^m_{\lambda,l_0}(s_1,0)$ is compact, we can choose a matrix $\left(
                                                                             \begin{array}{cc}
                                                                               \tilde{p}_1 & \tilde{l}_1 \\
                                                                               0 & \tilde{q}_1' \\
                                                                             \end{array}
                                                                           \right) $ in this compact set so that
$$
d(s_1,t_1)=\left|T(s_1,t_1)-\left(
                                                                             \begin{array}{cc}
                                                                               \tilde{p}_1 & \tilde{l}_1 \\
                                                                               0 & \tilde{q}_1' \\
                                                                             \end{array}
                                                                           \right) \right|.
$$
Put $\tilde{q}_1:=(s_1)^{-m}\tilde{q}_1'$. Then $(\tilde{p}_1,\tilde{q}_1)\in \tilde{K}_\lambda\cup(\overline{\tilde{U}^+_\lambda\cup\tilde{U}^-_\lambda})$ if $\tilde{l}_1\in\{l_0,-l_0\}$ or $(\tilde{p}_1,\tilde{q}_1)\in \tilde{K}_\lambda\cup(\partial \tilde{U}^+_\lambda\cup \partial\tilde{U}^-_\lambda)$ if $\tilde{l}_1\in(-l_0,l_0)$. So we have
$$
\left(
                                                                             \begin{array}{cc}
                                                                               \tilde{p}_1 & \tilde{l}_1 \\
                                                                               0 & (s_2)^m\tilde{q}_1 \\
                                                                             \end{array}
                                                                           \right) \in
                                                                           K^m_{\lambda,l_0}(s_2,0)\cup \partial|_{W}U^m_{\lambda,l_0}(s_2,0).
$$
Note that
\begin{eqnarray*}
d(s_2,t_2) &\leq& \left|T(s_2,t_2)-\left(
                                                                             \begin{array}{cc}
                                                                               \tilde{p}_1 & \tilde{l}_1 \\
                                                                               0 & (s_2)^m\tilde{q}_1 \\
                                                                             \end{array}
                                                                           \right) \right|\\
&\leq& |T(s_2,t_2)-T(s_1,t_1)|+\left|T(s_1,t_1)-\left(
                                                                             \begin{array}{cc}
                                                                               \tilde{p}_1 & \tilde{l}_1 \\
                                                                               0 & (s_1)^m\tilde{q}_1 \\
                                                                             \end{array}
                                                                           \right) \right|\\
& & +\left|\left(
                                                                             \begin{array}{cc}
                                                                               \tilde{p}_1 & \tilde{l}_1 \\
                                                                               0 & (s_1)^m\tilde{q}_1 \\
                                                                             \end{array}
                                                                           \right)-\left(
                                                                             \begin{array}{cc}
                                                                               \tilde{p}_1 & \tilde{l}_1 \\
                                                                               0 & (s_2)^m\tilde{q}_1 \\
                                                                             \end{array}
                                                                           \right) \right|,
\end{eqnarray*}
and so
\begin{eqnarray*}
d(s_2,t_2)-d(s_1,t_1)
&\leq& |T(s_2,t_2)-T(s_1,t_1)|+\left|\left(
                                                                             \begin{array}{cc}
                                                                               \tilde{p}_1 & \tilde{l}_1 \\
                                                                               0 & (s_1)^m\tilde{q}_1 \\
                                                                             \end{array}
                                                                           \right)-\left(
                                                                             \begin{array}{cc}
                                                                               \tilde{p}_1 & \tilde{l}_1 \\
                                                                               0 & (s_2)^m\tilde{q}_1 \\
                                                                             \end{array}
                                                                           \right) \right|\\
&\leq& \frac{\epsilon}{2}+\sigma(1)|(s_1)^m-(s_2)^m|.
\end{eqnarray*}
Let $I\subset\RR_+$ be a compact interval with $\{s\in\RR_+:(s,t)\in F\}\subset I$. Then the mapping $s\mapsto s^m$ is uniformly continuous on $I$, so that there exists a $\delta'>0$ such that
$$
s_1,s_2\in I,\,|s_1-s_2|\leq\delta'\Rightarrow |(s_1)^m-(s_2)^m|\leq\frac{\epsilon}{2\sigma(1)}.
$$
Thus if $(s_1,t_1),(s_2,t_2)\in F$ and $|(s_1,t_1)-(s_2,t_2)|\leq\min\{\delta,\delta'\}$, then
$$
d(s_2,t_2)-d(s_1,t_1)\leq\epsilon.
$$
Changing the roles of $(s_1,t_1)$ and $(s_2,t_2)$ and combining the results, we obtain the continuity of the mapping $d$ on $F$.
\end{proof}

\begin{lem}\label{2-3-6}
Let $s>0$ and $v\in\RR$, and let
$$
A:=\left(
     \begin{array}{cc}
       \tilde{p} & \tilde{l} \\
       s^mv & \tilde{q}' \\
     \end{array}
   \right)\in\RR^{2\times 2}
$$
be such that $|\tilde{l}|\leq l_0$. Then
$$
\mathrm{dist}(A,K^m_{\lambda,l_0}(s,v))=\mathrm{dist}((\tilde{p},\tilde{q}'),\tilde{K}^m_\lambda(s)).
$$
\end{lem}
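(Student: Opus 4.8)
The plan is to reduce the distance to $K^m_{\lambda,l_0}(s,v)$, a set living in the affine space $W_{s^mv}$, to a two-dimensional distance by exploiting the product structure of the constraint. First I would observe that since $A\in W_{s^mv}$ (its $(2,1)$-entry is $s^mv$) and every element of $K^m_{\lambda,l_0}(s,v)$ also lies in $W_{s^mv}$, the $(2,1)$-entries always cancel in the difference $A-X$; hence for $X=\left(\begin{matrix} p & l \\ s^mv & s^mq \end{matrix}\right)\in K^m_{\lambda,l_0}(s,v)$ we have
\[
|A-X|^2=(\tilde p-p)^2+(\tilde l-l)^2+(\tilde q'-s^mq)^2.
\]
The three groups of summands are controlled by independent constraints: $(p,q)\in\tilde K_\lambda$ couples $p$ and $q$ (equivalently $(p,s^mq)\in\tilde K^m_\lambda(s)$), while $|l|\le l_0$ constrains only $l$. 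Since $|\tilde l|\le l_0$ by hypothesis, the choice $l=\tilde l$ is admissible and annihilates the middle term $(\tilde l-l)^2$, and no other choice can do better; so the infimum decouples as
\[
\operatorname{dist}(A,K^m_{\lambda,l_0}(s,v))^2=\inf_{(p,q)\in\tilde K_\lambda}\bigl[(\tilde p-p)^2+(\tilde q'-s^mq)^2\bigr]+0.
\]

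Next I would rewrite the remaining infimum in terms of $\tilde K^m_\lambda(s)=\{(p,s^mq):(p,q)\in\tilde K_\lambda\}$. Setting $q'=s^mq$, the pair $(p,q')$ ranges exactly over $\tilde K^m_\lambda(s)$ as $(p,q)$ ranges over $\tilde K_\lambda$, and $(\tilde p-p)^2+(\tilde q'-s^mq)^2=(\tilde p-p)^2+(\tilde q'-q')^2=|(\tilde p,\tilde q')-(p,q')|^2$. Hence the infimum equals $\operatorname{dist}((\tilde p,\tilde q'),\tilde K^m_\lambda(s))^2$, which gives the claimed identity after taking square roots. One should also note for cleanliness that $\tilde K_\lambda$ (hence $\tilde K^m_\lambda(s)$ and $K^m_{\lambda,l_0}(s,v)$) is compact, so all these infima are attained and there is no issue about the distance being well defined.

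This lemma is essentially bookkeeping, so I do not expect a genuine obstacle; the only point requiring a little care is justifying that the optimal $l$ is exactly $l=\tilde l$ — i.e. that one is allowed to optimize over $l$ separately from $(p,q)$. This is immediate because the constraint set $\tilde K_\lambda\times[-l_0,l_0]$ (in the coordinates $(p,q,l)$) is a genuine Cartesian product, so the squared distance, being a sum of a function of $(p,q)$ and a function of $l$, is minimized coordinatewise; and since $\tilde l\in[-l_0,l_0]$, minimizing $(\tilde l-l)^2$ over $l\in[-l_0,l_0]$ yields $0$. The rest is the change of variables $q'=s^mq$ identifying the $(p,q')$-infimum with a distance to $\tilde K^m_\lambda(s)$ in $\RR^2$.
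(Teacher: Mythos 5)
Your proof is correct and uses essentially the same idea as the paper's: both exploit the Cartesian product structure of the constraint set and the hypothesis $|\tilde l|\le l_0$ to set $l=\tilde l$ optimally, reducing the matrix distance to the planar distance. The only stylistic difference is that you decouple the squared distance as a sum and minimize coordinatewise, whereas the paper establishes the two inequalities $\le$ and $\ge$ separately by direct comparison of candidate minimizers; the content is the same.
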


\begin{proof}
Choose any $(p,q)\in\tilde{K}_\lambda$. Then
$$
\mathrm{dist}(A,K^m_{\lambda,l_0}(s,v))\leq \left|\left(
     \begin{array}{cc}
       \tilde{p} & \tilde{l} \\
       s^mv & \tilde{q}' \\
     \end{array}
   \right)-\left(
     \begin{array}{cc}
       p & \tilde{l} \\
       s^mv & s^mq \\
     \end{array}
   \right)\right|=|(\tilde{p},\tilde{q}')-(p,s^mq)|.
$$
Taking an infimum on $(p,q)\in\tilde{K}_\lambda$, we have
$$
\mathrm{dist}(A,K^m_{\lambda,l_0}(s,v))\leq\mathrm{dist}((\tilde{p},\tilde{q}'),\tilde{K}^m_\lambda(s)).
$$
To show the reverse inequality, choose any $(p,q)\in\tilde{K}_\lambda$ and any $l\in\RR$ with $|l|\leq l_0$. Then
$$
\mathrm{dist}((\tilde{p},\tilde{q}'),\tilde{K}^m_\lambda(s))\leq |(\tilde{p},\tilde{q}')-(p,s^mq)|=\left|\left(
     \begin{array}{cc}
       \tilde{p} & \tilde{l} \\
       s^mv & \tilde{q}' \\
     \end{array}
   \right)-\left(
     \begin{array}{cc}
       p & \tilde{l} \\
       s^mv & s^mq \\
     \end{array}
   \right)\right|
$$
$$
\leq \left|\left(
     \begin{array}{cc}
       \tilde{p} & \tilde{l} \\
       s^mv & \tilde{q}' \\
     \end{array}
   \right)-\left(
     \begin{array}{cc}
       p & l \\
       s^mv & s^mq \\
     \end{array}
   \right)\right|,
$$
so that taking an infimum on $(p,q,l)\in\tilde{K}_\lambda\times[-l_0,l_0]$, we have
$$
\mathrm{dist}((\tilde{p},\tilde{q}'),\tilde{K}^m_\lambda(s))\leq\mathrm{dist}(A,K^m_{\lambda,l_0}(s,v)).
$$
Thus the lemma is proved. \end{proof}

\subsection{Admissible class and the density theorem}\label{s3-3}
Let $J:=(0,R)\subset\RR$, and $J_T:=J\times(0,T)\subset\RR^2$. Fix a $\delta_0\in\RR$ with $0<\delta_0<R/2$, and put $J^*_T:=(\delta_0,R-\delta_0)\times(0,T)\subset J_T$.

Let $\Phi^*=(v^*,\vp^*)\in W^{1,\infty}(J^*_T;\RR^2)$ be a given piecewise $C^1$ function in $J^*_T$. We define the admissible class needed for  later construction of the weak solutions as follows:
\begin{equation}\label{2-3-5}
\PP^m_{\lambda,l_0}:=\left\{\Phi\in \Phi^*+ W^{1,\infty}_0(J^*_T;\RR^2):
\left. \begin{array}{l}
  \Phi=(v,\vp) \textrm{ is piecewise $C^1$ in } J^*_T, \\
  \nabla\Phi(s,t)=\left(
                           \begin{array}{cc}
                             v_s(s,t) & v_t(s,t) \\
                             \vp_s(s,t) & \vp_t(s,t) \\
                           \end{array}
                         \right)
   \\
  \in K^m_{\lambda,l_0}(s,v(s,t))\cup U^m_{\lambda,l_0}(s,v(s,t)) \\
  \textrm{for a.e. } (s,t)\in J^*_T
\end{array} \right. \right\}.
\end{equation}
Note that this set may be  empty; but in our application below, we will define a function $\Phi^*$ so that this class  $\PP^m_{\lambda,l_0}$ is nonempty.

We are now in a position to state the following main density result, whose proof will be postponed to   Section \ref{s5}.
\begin{thm}[Density Theorem] \label{2-3-1}
For each $\epsilon>0$, the set
$$
\PP^m_{\lambda,l_0,\epsilon}:=\left\{\Phi\in\PP^m_{\lambda,l_0}:
\int_{J^*_T}\mathrm{dist}(\td\Phi(s,t),K^m_{\lambda,l_0}(s,v(s,t)))dsdt\leq\epsilon|J^*_T|\right\}
$$
is dense in $\PP^m_{\lambda,l_0}$ with respect to the $L^\infty(J^*_T;\RR^2)$-metric.
\end{thm}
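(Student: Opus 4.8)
The plan is to prove the Density Theorem by an explicit one‑step‑at‑a‑time perturbation argument combined with the Vitali Covering Lemma and the Gluing Lemma. Fix $\Phi=(v,\vp)\in\PP^m_{\lambda,l_0}$ and $\eta>0$; I must produce $\Psi\in\PP^m_{\lambda,l_0,\epsilon}$ with $\|\Psi-\Phi\|_{L^\infty}\le\eta$. Since $\Phi$ is piecewise $C^1$, the domain $J^*_T$ splits (up to a null set) into finitely many pieces on each of which $\td\Phi$ is continuous; on the pieces where $\td\Phi(s,t)\in K^m_{\lambda,l_0}(s,v(s,t))$ the integrand already vanishes, so attention is restricted to the open set where $\td\Phi(s,t)\in U^m_{\lambda,l_0}(s,v(s,t))$. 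There, by Lemma \ref{2-3-9}, the lower‑left entry is $s^mv$, the upper‑left entry $p=v_s$ lies in $(-\lambda,-\lambda^-)\cup(\lambda^-,\lambda)$, and the flux entry $q'=\vp_t$ lies strictly inside the interval $I^m_\lambda(s,p)$, whose endpoints are $s^m\sigma(\lambda)$ and $s^m\sigma(p)$ (in the appropriate order). The idea is to replace, on a tiny subregion, the affine piece of $\Phi$ by a finely oscillating function whose gradient takes only the two "end" values corresponding to $p'=\lambda$ (resp. $\lambda^-$) and $p''$ close to $p$, thereby driving the new gradient either onto $K^m_{\lambda,l_0}$ or deep inside $U^m_{\lambda,l_0}$ with a controlled decrease of the flux‑distance.

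The key technical input is the family of auxiliary functions to be constructed in Section \ref{s4}: for a point $(s_0,t_0)$ and an affine target whose gradient is in $U^m_{\lambda,l_0}(s_0,v(s_0,t_0))$, there is a piecewise‑$C^1$ map, agreeing with the affine function on the boundary of a small box $s_0+\epsilon_0 Q$ (a scaled copy of a fixed reference domain), whose gradient stays in $K^m_{\lambda,l_0}\cup U^m_{\lambda,l_0}$ at the correct base point $s^mv$, and such that the portion of the box on which the gradient lands in $U^m_{\lambda,l_0}$ has measure a definite fraction $\theta<1$ of the box, with the "distance to $\tilde K^m_\lambda(s)$" reduced proportionally. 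Here I would use Lemmas \ref{2-3-7} and \ref{2-3-6} to reduce the inhomogeneous distance $\operatorname{dist}(\td\Phi,K^m_{\lambda,l_0}(s,v))$ to $\operatorname{dist}((v_s,\vp_t),\tilde K^m_\lambda(s))$, i.e. to a one‑dimensional quantity comparable to $\operatorname{dist}(\vp_t,\,s^m\sigma([-\lambda,\lambda]))$, which behaves like a fixed fraction of the length of $I^m_\lambda(s,v_s)$; the continuity Lemma \ref{2-3-8} guarantees this quantity is uniformly controlled over the (compact closure of the) region being refined, so the same contraction factor $\theta$ works simultaneously on all the Vitali cells. Because the constraint $\vp_s=s^mv$ must be preserved through every gluing — this is precisely the novelty over \cite{zhang06} — the auxiliary functions genuinely depend on the location $s_0$ and the separating curves are nonlinear $C^1$ curves; I will simply cite the output of Section \ref{s4} for the geometric construction and concentrate in Section \ref{s5} on the covering/iteration bookkeeping.

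The argument then proceeds by: (i) covering the "bad" region (where the gradient is in $U^m_{\lambda,l_0}$) up to measure zero by disjoint scaled cells $s_j+\epsilon_j Q\subset J^*_T$ via the Vitali Covering Lemma, with $\epsilon_j$ so small that the oscillation of $s\mapsto s^m$, of $v$, and of the relevant data over each cell is negligible; (ii) on each cell inserting the Section \ref{s4} auxiliary function adapted to the (almost constant) local data, keeping a uniform $W^{1,\infty}$ bound so the Gluing Lemma applies and yields a new admissible $\Phi_1\in\PP^m_{\lambda,l_0}$ with $\|\Phi_1-\Phi\|_{L^\infty}$ as small as we like (controlled by $\max_j\epsilon_j$ times the uniform gradient bound) and with
\[
\int_{J^*_T}\operatorname{dist}(\td\Phi_1(s,t),K^m_{\lambda,l_0}(s,v_1))\,dsdt\le\theta\int_{J^*_T}\operatorname{dist}(\td\Phi(s,t),K^m_{\lambda,l_0}(s,v))\,dsdt;
\]
(iii) iterating this step $N$ times with geometrically shrinking $L^\infty$‑perturbations whose total is $\le\eta$, so that after finitely many steps $\theta^N\operatorname{diam}$ of the target set is $\le\epsilon|J^*_T|$, giving the desired $\Psi$. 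The main obstacle is step (ii): verifying that the auxiliary building blocks of Section \ref{s4} can be rescaled and translated to a generic cell while (a) exactly matching the affine boundary data coming from the local linearization of $\Phi$, (b) maintaining the pointwise inclusion $\td\Psi(s,t)\in K^m_{\lambda,l_0}(s,v(s,t))\cup U^m_{\lambda,l_0}(s,v(s,t))$ at the \emph{true} base point $s^mv(s,t)$ rather than the frozen one, and (c) keeping the $W^{1,\infty}$ norms uniformly bounded despite the $t$‑derivatives blowing up like $s^{-1}$ near $s=0$ — this last point is handled by the restriction to $J^*_T=(\delta_0,R-\delta_0)\times(0,T)$, which keeps $s$ bounded away from $0$, and is exactly the reason the Density Theorem is stated on $J^*_T$ and not on all of $J_T$.
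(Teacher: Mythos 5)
Your plan diverges from the paper's proof in two structurally important ways, and one of these creates a genuine gap.

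The paper's proof is a \emph{one-step} construction, not an iteration. On each small Vitali cell it chooses the amplitudes $a^k_i$, $b^k_i$ of the Section~\ref{s4} oscillation via (\ref{5-16}) so that the two shifted points $(p^k_i - a^k_i,(s^k_i)^m q^k_i)$ and $(p^k_i + b^k_i,(s^k_i)^m q^k_i)$ both lie \emph{inside} $\tilde U^m_\lambda(s^k_i)$ at distance exactly $\delta_i/2$ from $\tilde K^m_\lambda(s^k_i)$, and then checks that the various error terms ($t$-derivative of $\tilde v$, the $s$ versus $s^k_i$ discrepancy, etc.) are each $\le\delta_i/6$ or $\delta_i/12$, so the new gradient sits within $\delta_i<\epsilon/2$ of $K^m_{\lambda,l_0}$ everywhere on the perturbed cell. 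There is no contraction factor $\theta<1$ and no iteration. Your proposal instead postulates that the auxiliary function has "a definite fraction $\theta<1$ of the box on which the gradient lands in $U$, with the distance reduced proportionally," but the Lemma~\ref{4-16} building blocks have no such property; the $s$-derivative takes precisely the two values $-a$ and $b$ (item (c) of Lemma~\ref{4-16}), and the new gradient is in $U$ on essentially the \emph{whole} diamond, just much closer to $K$. So the contraction step (ii) of your plan is not delivered by the cited construction, and without it the geometric-decay iteration in step (iii) cannot be closed.

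The more serious gap is that you restrict attention to "the open set where $\nabla\Phi\in U^m_{\lambda,l_0}$," treating all of it uniformly. But the argument breaks down on the part of that set where $\nabla\Phi$ is close to the relative boundary $\partial|_{W_{s^m v}} U^m_{\lambda,l_0}(s,v)$ — specifically, near the horizontal segment $q'=s^m\sigma(\lambda)$, $\lambda^-\le p\le\lambda$, which is far from $\tilde K^m_\lambda(s)$, or near $|v_t|=l_0$. There the distance to $K$ is not small, yet the admissible oscillation amplitude (which must keep $\nabla\Phi_\eta$ in $K\cup U$) shrinks to zero, so you cannot reduce the distance, and the perturbed $v_t$ risks leaving $[-l_0,l_0]$. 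The paper handles this by introducing the continuous distance function $d_i$ to $K^m_{\lambda,l_0}\cup\partial|_W U^m_{\lambda,l_0}$ (Lemmas~\ref{2-3-7} and~\ref{2-3-8}), splitting each $G_i$ into $\hat K_i=\{d_i<\delta_i\}$ and $\hat G_i=\{d_i>\delta_i\}$, choosing $\delta_i$ so the integral over $\hat K_i$ is already $\le\tfrac{\epsilon}{2^{i+1}}|J^*_T|$ (using that $\cap_\delta K^{3,\beta}_{i,\delta}=\emptyset$), and only performing the Vitali-covered oscillation on $\hat G_i$, where (\ref{5-12}) gives the uniform margin $\delta_i$ both from $\partial\tilde U^m_\lambda(s)$ and from $|v_t|=l_0$ that every subsequent estimate rests on. Your proposal has no analogue of this separation, so both the admissibility of the glued function and the uniformity of your claimed contraction factor are unjustified near $\partial U$. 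Fixing this essentially forces you back onto the paper's decomposition, at which point the one-step choice (\ref{5-16}) is simpler than iterating.
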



\section{Proof of Theorem \ref{1-8}}\label{s3}

In this section we  aim to prove Theorem \ref{1-8} based on the density theorem, Theorem \ref{2-3-1}. To this end, we assume $a,\sigma$ and $u_0$ are functions given as above.

\subsection{The modified parabolic problem} Let $J$, $J_T$ be defined as in Subsection \ref{s3-3}.  Since $u_0$ is radial, let $u_0(x)=v_0(|x|)$ for a function $v_0\in C^{3,\alpha}(\bar{J})$, and hence
\begin{eqnarray}\label{3-1}
\max_{\bar{J}}|v_0'|=\max_{\bar\Omega}|Du_0|=M>0.
\end{eqnarray}
Fix any $\epsilon>0$. We define a number $\lambda>1$ as follows: if $M\ge 1$, let $\lambda =M+\epsilon$; if $0<M<1$, let $\lambda\gg 1$ be such that $\sigma(\lambda)<\sigma(M).$  Then we always have that $\lambda^-<M<\lambda.$

With the choice of $M$ and $\lambda$, let  $\sigma^*$ be a function that can be determined by  Lemma \ref{2-3-2}. Define $a^*(p):=\sigma^*(\sqrt p)/\sqrt{p}$ for each $p>0$. Then $a^*(p)=\sigma^*(\sqrt p)/\sqrt{p}=\sigma(\sqrt p)/\sqrt{p}=a(p)$ for every $p\in(0,(\lambda^-)^2]$. Since $a\in C^{2,\alpha}([0,\infty))$, we also have $a^*\in C^{2,\alpha}([0,\infty))$. Also the functions $a^*$ and $\sigma^*$ satisfy the hypotheses in Theorem \ref{2-2-1}. Therefore, for the given initial condition $u_0$, the problem (\ref{2-2-2}) has a unique radial solution $u^*\in C^{3+\alpha,1+\alpha/2}(\bar\Omega_T)$ with the maximum principle
\begin{equation}\label{3-2}
\max_{\bar\Omega_T}|Du^*|=\max_{\bar\Omega}|Du_0|=M>0.
\end{equation}
Let $u^*(x,t)=v^*(|x|,t)$ for a function $v^*:\bar{J}_T\RA\RR$.  Then $v^*\in C^{3+\alpha,1+\frac{\alpha}{2}}(J_T)$.
Let $(x,t)\in\{\Omega\setminus\{0\}\}\times(0,T)$. For each $i\in\{1,\ldots,n\}$,
$$
\partial_i u^*(x,t):=\partial_{x_i} u^* (x,t)=v^*_s(|x|,t)\frac{x_i}{|x|}.
$$
So $Du^*(x,t)=v^*_s(|x|,t)\frac{x}{|x|}$, and hence
$$
a^*(|Du^*(x,t)|^2)Du^*(x,t)=a^*(v_s^*(|x|,t)^2)v^*_s(|x|,t)\frac{x}{|x|}.
$$
Taking divergence on both sides, we obtain
\begin{eqnarray*}
\mathrm{div}(a^*(|Du^*(x,t)|^2)Du^*(x,t))&=&(a^*(v^*_s(s,t)^2)v^*_s(s,t))_s\big{|}_{s=|x|}\\
&&+a^*(v^*_s(|x|,t)^2)v^*_s(|x|,t)\frac{n-1}{|x|}.
\end{eqnarray*}
Since $u^*_t(x,t)=v^*_t(|x|,t)$, we thus have
\begin{equation}\label{3-3-0}
v^*_t(s,t)=(a^*(v^*_s(s,t)^2)v^*_s(s,t))_s+a^*(v^*_s(s,t)^2)v^*_s(s,t)\frac{n-1}{s}.\quad (s=|x|)
\end{equation}
In summary, $v^*\in C^{3+\alpha,1+\alpha/2}(\bar{J}_T)$ solves the following problem:
\begin{equation}\label{3-3}
\left\{ \begin{array}{ll}
  (s^{n-1}v^*(s,t))_t=(s^{n-1}a^*(v^*_s(s,t)^2)v^*_s(s,t))_s & \textrm{for }(s,t)\in J_T \\
  v^*_s(0,t)=v^*_s(R,t)=0 & \textrm{for }t\in[0,T] \\
  v^*(s,0)=v_0(s) & \textrm{for } x\in \bar{J},
\end{array} \right.
\end{equation}
where
\begin{equation}\label{cont2}
\max_{\bar{J}_T}|v^*_s|=\max_{\bar{J}}|v_0'|=M.
\end{equation}

The uniform continuity of $v^*_s$ on $\bar{J}_T$ and the second of (\ref{3-3}) imply that there exists a $\delta_0\in(0,R/2)$ such that
\begin{eqnarray}\label{3-5}
\max_{\{[0,\delta_0]\cup[R-\delta_0,R]\}\times[0,T]}|v^*_s|\leq \lambda^-.
\end{eqnarray}
With this $\delta_0$, let $J_T^*$ be defined as in Subsection \ref{s3-3}.

\subsection{The starting function $\Phi^*$} We define $\Phi^*:=(v^*,\vp^*),$ where  $\vp^*:\bar{J}_T\RA\RR$ is given  by
$$
\vp^*(s,t):=\int_0^s w^{n-1}v^*(w,t)dw\,\,\textrm{ for every }(s,t)\in\bar{J}_T.
$$
Then $\vp^*\in C^{3+\alpha,1+\alpha/2}(\bar{J}_T)$, and
\begin{eqnarray}\label{3-11}
\vp^*_s(s,t)&=&s^{n-1}v^*(s,t),\nonumber\\
\vp^*_t(s,t)&=&\int_0^s w^{n-1}v^*_t(w,t)dw\nonumber\\
&=&\int^s_0(w^{n-1}a^*(v^*_s(w,t)^2)v^*_s(w,t))_wdw\,\,\,\,\,\,\textrm{(by (\ref{3-3}))}\\
&=&s^{n-1}a^*(v^*_s(s,t)^2)v^*_s(s,t)\nonumber
\end{eqnarray}
for every $(s,t)\in J_T$.
So $\Phi^*=(v^*,\vp^*)\in C^{3+\alpha,1+\alpha/2}(\bar{J}_T;\RR^2)$, and
$$
\td\Phi^*(s,t)=\left(
                 \begin{array}{cc}
                   v^*_s(s,t) & v^*_t(s,t) \\
                   \vp^*_s(s,t) & \vp^*_t(s,t) \\
                 \end{array}
               \right)
               =\left(
                  \begin{array}{cc}
                    v^*_s(s,t) & v^*_t(s,t) \\
                    s^{n-1}v^*(s,t) & s^{n-1}a^*(v^*_s(s,t)^2)v^*_s(s,t) \\
                  \end{array}
                \right).
$$

Put  $l_0:=\max_{\bar{J}_T}|v^*_t|+1>0$. Let $\tilde{K}_\lambda$ and $\tilde{U}_\lambda^\pm$ be defined as in (\ref{2-3-3}). For each $(s,t)\in J_T$, since $|v^*_s(s,t)|\leq M$, it follows from Lemma \ref{2-3-2} that
$$
(v^*_s(s,t),a^*(v^*_s(s,t)^2)v^*_s(s,t))=(v^*_s(s,t),\sigma^*(v^*_s(s,t)))\in\tilde{K}_\lambda\cup\tilde{U}^+_\lambda\cup\tilde{U}^-_\lambda
$$
and that
\begin{equation*}
(v^*_s(s,t),a^*(v^*_s(s,t)^2)v^*_s(s,t))=(v^*_s(s,t),\sigma(v^*_s(s,t)))\in\tilde{K}_\lambda\,\,\,\,\,\,\textrm{(by (\ref{3-5}))}
\end{equation*}
if $(s,t)\in J_T\setminus J^*_T$.
Hence
\begin{equation}\label{3-9}
\begin{cases} \td\Phi^*(s,t)\in K^{n-1}_{\lambda,l_0}(s,v^*(s,t))\cup U^{n-1}_{\lambda,l_0}(s,v^*(s,t))\ & \forall\,  (s,t)\in J_T,\\
\td\Phi^*(s,t)\in K^{n-1}_{\lambda,l_0}(s,v^*(s,t)) &\forall\,  (s,t)\in J_T\setminus J^*_T,
\end{cases}
\end{equation}
 where the sets $K^{n-1}_{\lambda,l_0}(s,v)$ and $U^{n-1}_{\lambda,l_0}(s,v)$ are defined as in (\ref{2-3-4-k}) and (\ref{2-3-4-u}) with $m=n-1$.

We now define the admissible class $\PP^{n-1}_{\lambda,l_0}$ by using this function $\Phi^*$ on $J_T^*$ as in (\ref{2-3-5}) with $m=n-1$.  Then clearly,
$$
\Phi^*\in\PP^{n-1}_{\lambda,l_0}\neq \emptyset.
$$

\subsection{The Baire category method} Let $X$ denote the closure of $\PP^{n-1}_{\lambda,l_0}$ in the space $L^\infty(J^*_T;\RR^2)$. Since the sets $K^{n-1}_{\lambda,l_0}(s,v)$ and $U^{n-1}_{\lambda,l_0}(s,v)$ are bounded, it is easily checked that
$$
\PP^{n-1}_{\lambda,l_0}\subset X\subset \Phi^*+W^{1,\infty}_0(J^*_T;\RR^2).
$$
Proposition \ref{2-1-2} shows that the gradient operator $\td:X\RA L^1(J^*_T;\RR^{2\times 2})$ is a Baire-one map, and so the set $\mathcal{C}_\td$ of points in $X$ at which the map $\td$ is continuous is dense in $X$ by Theorem \ref{2-1-1}. So we have $\mathcal{C}_\td\neq\emptyset$, since $X\neq\emptyset$. Later we show that $\mathcal{C}_\nabla$ is actually  an infinite set.
But first we elaborate on how  the density theorem (Theorem \ref{2-3-1}) guarantees that every function in $\mathcal{C}_\nabla$ provides  us a solution to Problem (\ref{1-1}).

Let $\Phi=(v,\vp)\in\mathcal{C}_\td\subset X.$  Let $k\in\NN$. By the definition of $X$, we can choose a $\tilde{\Phi}_k\in\PP^{n-1}_{\lambda,l_0}$ so that
$$
||\Phi-\tilde{\Phi}_k||_{L^\infty}\leq\frac{1}{k}.
$$
By the density theorem, Theorem \ref{2-3-1}, we can choose a function $\Phi_k=(v_k,\vp_k)\in\PP^{n-1}_{\lambda,l_0,1/k}$ so that
$$
||\tilde{\Phi}_k-\Phi_k||_{L^\infty}\leq\frac{1}{k}.
$$
Combining these two inequalities, we have
$$
||\Phi-\Phi_k||_{L^\infty}\leq\frac{2}{k}\RA 0\,\,\textrm{ as } k\RA\infty.
$$
Since the map $\td$ is continuous at $\Phi$, we thus have
$$
\td\Phi_k\RA\td\Phi\,\,\textrm{ in } L^1(J^*_T;\RR^{2\times 2})\,\,\textrm{ as }k\RA\infty.
$$
Upon passing to a subsequence (we do not relabel), we can assume that
\begin{equation}\label{pp-conv}
\td\Phi_k(s,t)\RA\td\Phi(s,t)\,\,\textrm{ in } \RR^{2\times 2}\,\,\textrm{ as }k\RA\infty,\,\,\textrm{ for a.e. }(s,t)\in J^*_T.
\end{equation}
Since $\Phi_k\in\mathcal{P}^{n-1}_{\lambda,l_0,1/k}$, it follows from Lemma \ref{2-3-6} that
\[
\begin{split} & \int_{J^*_T}\mathrm{dist}(((v_k)_s(s,t),(\vp_k)_t(s,t)),\tilde{K}^{n-1}_\lambda(s))dsdt  =
\\
&\int_{J^*_T}\mathrm{dist}(\td\Phi_k(s,t),K^{n-1}_{\lambda,l_0}(s,v_k(s,t)))dsdt \leq \frac{|J^*_T|}{k}\quad \forall\, k\in\NN.
\end{split}
\]
Applying Fatou's lemma to this inequality with (\ref{pp-conv}), we obtain
$$
\int_{J^*_T} \mathrm{dist}((v_s(s,t),\vp_t(s,t)),\tilde{K}^{n-1}_\lambda(s))\,dsdt =0.
$$
Since $\tilde{K}^{n-1}_\lambda(s)$ is closed in $\RR^2$ for each $s>0$, it follows that
\begin{equation}\label{3-6}
(v_s(s,t),\vp_t(s,t))\in\tilde{K}^{n-1}_\lambda(s)\,\,\textrm{ for a.e. }(s,t)\in J^*_T.
\end{equation}
Moreover, for each $k\in\NN$, we have
$$
|(v_k)_t(s,t)|\leq l_0,\,\,\,(\vp_k)_s(s,t)=s^{n-1}v_k(s,t) \,\,\textrm{ for a.e. } (s,t)\in J^*_T,
$$
so that letting $k\RA\infty$, it follows that
\begin{equation}\label{3-7}
|v_t(s,t)|\leq l_0,\,\,\vp_s(s,t)=s^{n-1}v(s,t)\,\,\textrm{ for a.e. } (s,t)\in J^*_T.
\end{equation}
Combining (\ref{3-6}) and (\ref{3-7}), we have
$$
\td\Phi(s,t)\in K^{n-1}_{\lambda,l_0}(s,v(s,t))\,\,\textrm{ for a.e. } (s,t)\in J^*_T.
$$
Since $\Phi\in\Phi^*+W^{1,\infty}_0(J^*_T;\RR^2)$, we can extend  $\Phi$ from $J^*_T$ to $J_T$ by setting
\begin{equation}\label{3-8}
\Phi:=\Phi^*\,\,\textrm{ on }J_T\setminus J^*_T.
\end{equation}
Then it follows that $\Phi\in\Phi^*+W^{1,\infty}_0(J_T;\RR^2)$ and $\Phi\equiv\Phi^*$ on $\overline{J_T\setminus J^*_T}$, where we still write $\Phi=(v,\vp)$ on $\bar{J}_T$. Observe now that by (\ref{3-9}),
\begin{equation}\label{3-10}
\td\Phi(s,t)\in K^{n-1}_{\lambda,l_0}(s,v(s,t))\,\,\textrm{ for a.e. } (s,t)\in J_T.
\end{equation}

Define
\begin{equation}\label{def-u}
u(x,t):=v(|x|,t),\quad \psi(x,t):=\vp(|x|,t)\quad \forall\, (x,t)\in \bar\Omega_T.
\end{equation}
By   (\ref{3-10})  and (\ref{def-u}), we have
\begin{equation}\label{pro-1}
Du(x,t)=v_s(s,t)\frac{x}{s},\quad D\psi(x,t)=\vp_s(s,t)\frac{x}{s}=|x|^{n-2}u(x,t) x, \quad s=|x|\neq 0.
\end{equation}
Since $(v,\vp)=(v^*,\vp^*)$ on $\overline{J_T\setminus J_T^*}$, it is guaranteed from the definition of $\vp^*$, (\ref{3-3}), and (\ref{3-11}) that  for all $t\in [0,T]$,
\begin{equation}\label{pro-2}
\vp(0,t)=0,\quad \vp(R,t)=\vp(R,0)=\int_0^R w^{n-1}v_0(w)\,dw.
\end{equation}

We now prove the following result.

\begin{thm} The function $u$ defined above solves problem (\ref{1-1}) in the sense that, for every $\xi\in C^1(\bar\Omega_T)$,
\begin{equation}\label{strong}
\int_\Omega \left ( u(x,T)\xi(x,T)  - u_0(x)\xi(x,0)\right ) dx =\int_{\Omega_T} (u \xi_t - a(|Du|^2)Du\cdot D\xi) \,dxdt.
\end{equation}
\end{thm}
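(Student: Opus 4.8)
The plan is to reduce the $n$-dimensional identity \eqref{strong} to a one-dimensional identity in the radial variable $s$, and then to read that identity off from the algebraic constraints $\vp_s=s^{n-1}v$ and $\vp_t=s^{n-1}\sigma(v_s)$ — which hold a.e.\ in $J_T$ by \eqref{3-10} and the definition \eqref{2-3-4-k} of $K^{n-1}_{\lambda,l_0}$ — using crucially that $\vp$ coincides with the \emph{smooth} classical function $\vp^*$ near $\{s=0\}$ and $\{s=R\}$. First I would pass to polar coordinates. Given $\xi\in C^1(\bar\Omega_T)$, let $\bar\xi(s,t)$ denote the average of $\xi(\cdot,t)$ over the sphere $\{|x|=s\}$; since $\xi\in C^1$ we have $\bar\xi\in C^1([0,R]\times[0,T])$. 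Because $a(|Du|^2)Du=\sigma(v_s(|x|,t))\,x/|x|$ a.e.\ by \eqref{pro-1}, and $D\xi\cdot x/|x|$ is the radial derivative of $\xi$, writing each integral over $\Omega$ (resp.\ $\Omega_T$) in polar coordinates and differentiating under the integral sign shows that each of the four integrals in \eqref{strong} equals $n\omega_n$ times its radial analogue, where $\omega_n:=|B_1(0)|$; for instance
\[
\int_{\Omega_T}a(|Du|^2)Du\cdot D\xi\,dxdt=n\omega_n\int_{J_T}s^{n-1}\sigma(v_s)\,\bar\xi_s\,dsdt ,
\]
and similarly $\int_{\Omega_T}u\,\xi_t\,dxdt=n\omega_n\int_{J_T}s^{n-1}v\,\bar\xi_t\,dsdt$, $\int_\Omega u(\cdot,T)\,\xi(\cdot,T)\,dx=n\omega_n\int_0^R s^{n-1}v(\cdot,T)\,\bar\xi(\cdot,T)\,ds$, and $\int_\Omega u_0\,\xi(\cdot,0)\,dx=n\omega_n\int_0^R s^{n-1}v_0\,\bar\xi(\cdot,0)\,ds$ (recall $u_0(x)=v_0(|x|)$, and $v(\cdot,0)=v_0$ since $\Phi=\Phi^*$ on $\overline{J_T\setminus J^*_T}$ and on $\partial J^*_T$ with $v^*(\cdot,0)=v_0$). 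Hence \eqref{strong} follows once we prove, for every $\eta\in C^1([0,R]\times[0,T])$,
\[
\int_0^R s^{n-1}\big(v(s,T)\eta(s,T)-v_0(s)\eta(s,0)\big)\,ds=\int_{J_T}\big(s^{n-1}v\,\eta_t-s^{n-1}\sigma(v_s)\,\eta_s\big)\,dsdt ,
\]
applied with $\eta=\bar\xi$.

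To establish this radial identity, I would first note that, since $\vp\in W^{1,\infty}(J_T)$, the distributional mixed partial derivatives of $\vp$ commute, so in $\mathcal D'(J_T)$
\[
\partial_s\big(s^{n-1}\sigma(v_s)\big)=\partial_s\vp_t=\partial_t\vp_s=\partial_t\big(s^{n-1}v\big)=s^{n-1}v_t .
\]
Thus the $L^\infty(J_T)$ function $\vp_t=s^{n-1}\sigma(v_s)$ has distributional $s$-derivative equal to $s^{n-1}v_t\in L^\infty(J_T)$, and a standard slicing (Fubini) argument gives that, for a.e.\ $t\in(0,T)$, the function $s\mapsto\vp_t(s,t)$ lies in $W^{1,\infty}(0,R)$ with $s$-derivative $s\mapsto s^{n-1}v_t(s,t)$. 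Moreover $\vp=\vp^*$ on $\overline{J_T\setminus J^*_T}=([0,\delta_0]\cup[R-\delta_0,R])\times[0,T]$ by \eqref{3-8}, and $\vp^*_t(0,t)=\vp^*_t(R,t)=0$ by \eqref{3-11} and the Neumann conditions $v^*_s(0,t)=v^*_s(R,t)=0$; hence the continuous representative of $s\mapsto\vp_t(s,t)$ vanishes at $s=0$ and $s=R$ for a.e.\ $t$, consistently with \eqref{pro-2}.

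Now fix $\eta\in C^1([0,R]\times[0,T])$. Integrating by parts in $t$ (for each fixed $s$, $t\mapsto v(s,t)$ is Lipschitz) and using $v(\cdot,0)=v_0$,
\[
\int_{J_T}s^{n-1}v\,\eta_t\,dsdt=\int_0^R s^{n-1}\big(v(s,T)\eta(s,T)-v_0(s)\eta(s,0)\big)\,ds-\int_{J_T}s^{n-1}v_t\,\eta\,dsdt .
\]
On the other hand, for a.e.\ $t$ the previous paragraph lets us integrate by parts in $s$ with vanishing boundary terms:
\[
\int_0^R s^{n-1}v_t(s,t)\,\eta(s,t)\,ds=-\int_0^R\vp_t(s,t)\,\eta_s(s,t)\,ds=-\int_0^R s^{n-1}\sigma(v_s(s,t))\,\eta_s(s,t)\,ds ,
\]
and integrating in $t$ yields $\int_{J_T}s^{n-1}v_t\,\eta\,dsdt=-\int_{J_T}s^{n-1}\sigma(v_s)\,\eta_s\,dsdt$. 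Substituting this into the previous display gives exactly the radial identity, hence \eqref{strong} for all $\xi\in C^1(\bar\Omega_T)$. The case $n=1$, where the sphere $\{|x|=s\}$ is the two-point set $\{-s,s\}$ and spherical averages are averages of two values, is handled in the same way.

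I expect the main obstacle to be the slicing and trace assertions of the second paragraph — that $\vp_t(\cdot,t)$ is, for a.e.\ $t$, an absolutely continuous function of $s$ with the expected weak derivative and with zero boundary values at $s=0,R$. This is precisely where one must use the compatibility between the weak interior structure ($\vp_s=s^{n-1}v$, $\vp_t=s^{n-1}\sigma(v_s)$ a.e.\ in $J_T$) and the classical behaviour of $\vp^*$ near the spatial boundary, and it is what makes every boundary contribution vanish. Everything else is polar-coordinate bookkeeping and one-dimensional integration by parts.
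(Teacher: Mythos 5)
Your proof is correct, and it takes a genuinely different route from the paper's. The paper stays in $n$-dimensional $x$-coordinates: writing $u = D\psi\cdot\frac{x}{|x|^n}$ with $\psi(x,t)=\vp(|x|,t)$, it integrates by parts on the punctured domains $\Omega^\epsilon=\{\epsilon<|x|<R\}$ using the Divergence Theorem and $\mathrm{div}(x/|x|^n)=0$, then sends $\epsilon\to 0^+$; the boundary terms at $|x|=\epsilon$ and $|x|=R$ are killed by $\psi(0,t)=0$ and by $\psi$ being constant on $\partial\Omega$. You instead reduce to a one-dimensional identity in the radial variable via spherical averaging of the test function, then invoke commutation of distributional mixed partials of $\vp\in W^{1,\infty}(J_T)$ together with $\vp_s=s^{n-1}v$ to get $\partial_s\vp_t=s^{n-1}v_t$ in $\mathcal D'$, slice by Fubini to obtain $\vp_t(\cdot,t)\in W^{1,\infty}(0,R)$ for a.e.\ $t$, and do a one-dimensional integration by parts with vanishing endpoint terms (justified because $\vp=\vp^*$ near $s=0,R$ and $\vp^*_t(0,t)=\vp^*_t(R,t)=0$). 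Both proofs rest on the same structural facts --- the two components of $\nabla\Phi$ being the flux pair $\vp_s=s^{n-1}v$, $\vp_t=s^{n-1}\sigma(v_s)$, and the classical behaviour of $\Phi^*$ near the lateral boundary --- but yours avoids the $\epsilon$-puncturing and the $n$-dimensional Divergence Theorem in favour of a 1D slicing lemma (the "for a.e.\ $t$, $\vp_t(\cdot,t)$ is absolutely continuous with the expected derivative" step), which you rightly identify as the only non-bookkeeping point; it is standard and correct as you sketch it (set $h(s,t)=\vp_t(s,t)-\int_{s_0}^s\tau^{n-1}v_t(\tau,t)\,d\tau$, show $\partial_s h=0$, conclude $h=c(t)$). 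The paper's route is slightly more self-contained since every integration by parts is performed on smooth integrands over $\Omega^\epsilon$ with a soft limit at the end; yours is arguably more transparent about why the result is really a one-dimensional identity in disguise.
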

\begin{proof} It is sufficient to show that (\ref{strong}) holds  for  every $\xi\in C^\infty(\bar\Omega_T).$ Let $\xi\in C^\infty(\bar\Omega_T).$ By (\ref{pro-1}), $u=D\psi \cdot \frac{x}{|x|^n}$, and hence
\[
\int_{\Omega_T} u\xi_t\,dxdt=\int_{\Omega_T} D\psi \cdot \frac{x}{|x|^n} \xi_t\,dxdt=\lim_{\epsilon\to 0^+} \int_{\Omega_T^\epsilon}D\psi \cdot \frac{x}{|x|^n} \xi_t\,dxdt,
\]
where $\Omega_T^\epsilon=\Omega^\epsilon  \times (0,T)$ with $\Omega^\epsilon= \{\epsilon<|x|<R\}.$ For  all sufficiently small  $\epsilon>0$, by the Divergence Theorem,
\[
\int_{\Omega_T^\epsilon}D\psi \cdot \frac{x}{|x|^n} \xi_t \,dxdt =\int_0^T\int_{\partial \Omega^\epsilon} \psi \xi_t \frac{x}{|x|^{n}}\cdot \mathbf n \,dSdt- \int_{\Omega_T^\epsilon}\psi \mbox{div}\left(\frac{x}{|x|^n} \xi_t\right)  dxdt
\]
\[
=
\frac{1}{R^{n-1}}\int_0^T\int_{|x|=R} \psi\xi_t\,dSdt -
\frac{1}{\epsilon^{n-1}}\int_0^T\int_{|x|=\epsilon} \psi\xi_t\,dSdt
- \int_{\Omega_T^\epsilon}\psi \mbox{div}\left(\frac{x}{|x|^n} \xi_t\right)  dxdt
\]
\[
=:A-B_\epsilon-C_\epsilon,
\]
where $\mathbf n$ is outward unit normal on $\partial\Omega^\epsilon.$
Since $\psi$ is continuous on $\bar \Omega_T$ and $\psi(0,t)=\vp(0,t)=0$ for all $t\in [0,T]$, it is easily seen that
\[
B_\epsilon\to 0\quad \mbox{as $\epsilon\to 0^+.$}
\]
 By (\ref{pro-2}), $\psi(x,t)=C$ for all $(x,t)\in\partial\Omega\times[0,T]$, where $C=\int_0^R w^{n-1}v_0(w)\,dw$ is a constant; hence
\[
A=\frac{1}{R^{n-1}}\int_0^T\int_{|x|=R} (\psi \xi)_t(x,t)\,dSdt=\frac{1}{R^{n-1}}\left[\int_{|x|=R} \psi \xi  dS\right]_0^T.
\]
For the term $C_\epsilon$, using $\mbox{div} \left(\frac{x}{|x|^n}\right)=0$, we have, from integration by parts on $t$,
\[
C_\epsilon = \int_{\Omega_T^\epsilon}\psi  \frac{x}{|x|^n} \cdot D\xi_t \,dxdt=\left[\int_{\Omega^\epsilon}   \frac{x}{|x|^n}  \psi \cdot D\xi \,dx\right]_0^T  -\int_{\Omega_T^\epsilon}\psi_t  \frac{x}{|x|^n} \cdot D\xi \,dxdt
\]
\[=:D_\epsilon -E_\epsilon.
\]
Using the Divergence Theorem and  $\mbox{div} \left(\frac{x}{|x|^n}\right)=0$ again, we have
\[
D_\epsilon=\left[\int_{\Omega^\epsilon}   \frac{x}{|x|^n}  \psi \cdot D\xi \,dx\right]_0^T=\left[\int_{\partial \Omega^\epsilon}  \psi\xi  \frac{x}{|x|^{n}}\cdot \mathbf n \,dS\right]_0^T-\left[\int_{\Omega^\epsilon}  D\psi \cdot \frac{x}{|x|^n}    \xi \,dx\right]_0^T
\]
\[
=\frac{1}{R^{n-1}}\left[\int_{|x|=R} \psi \xi  dS\right]_0^T-\frac{1}{\epsilon^{n-1}}\left[\int_{|x|=\epsilon} \psi \xi  dS\right]_0^T-\left[\int_{\Omega^\epsilon}  u\xi \,dx\right]_0^T
\]
\[
=A-\frac{1}{\epsilon^{n-1}}\left[\int_{|x|=\epsilon} \psi \xi  dS\right]_0^T-\left[\int_{\Omega^\epsilon}  u\xi \,dx\right]_0^T=:A-F_\epsilon -G_\epsilon,
\]
where
\[
\lim_{\epsilon\to 0^+} F_\epsilon = 0 \,\,\,\mbox{ since $\psi(0,t)=0\,\,\forall t\in[0,T]$},\quad \lim_{\epsilon\to 0^+}  G_\epsilon = \left[\int_{\Omega}  u\xi \,dx\right]_0^T.
\]
Finally, using the equation $\psi_t \frac{x}{|x|^n}= a(|Du|^2)Du$ on $\Omega_T$ with $x\neq 0$, we have
\[
E_\epsilon=\int_{\Omega_T^\epsilon}a(|Du|^2)Du \cdot D\xi \,dxdt \to \int_{\Omega_T} a(|Du|^2)Du \cdot D\xi \,dxdt \quad \mbox{as $\epsilon\to 0^+$.}
\]
Therefore
\[
\int_{\Omega_T} u\xi_t\,dxdt =\lim_{\epsilon\to 0^+} (A-B_\epsilon -C_\epsilon)=\lim_{\epsilon\to 0^+} (-B_\epsilon +F_\epsilon+G_\epsilon+E_\epsilon)
\]
\[
=\left[\int_{\Omega}  u\xi \,dx\right]_0^T+\int_{\Omega_T} a(|Du|^2)Du \cdot D\xi \,dxdt.
\]
This is exactly (\ref{strong}), where $u(x,0)=u_0(x)$ in $\Omega$ as shown independently in (c) below. We remark that the fact that $\psi$ is constant on $|x|=R$ plays an important role in the proof. This completes the proof.
\end{proof}

\subsection{Completion of Proof of Theorem \ref{1-8}}

Let us first verify that the radial function $u\in W^{1,\infty}(\Omega_T)$ defined above satisfies all of (a)-(f) in Theorem \ref{1-8}.\\

\underline{\textbf{(a):}} This follows easily from (\ref{strong}).\\

\underline{\textbf{(b):}} From (\ref{3-8}), we have $v\equiv v^*$ on $\overline{J_T\setminus J^*_T}$. So by the definition of $u$,
$$
u\equiv u^*\in C^{3+\alpha,1+\alpha/2}\left(\{\overline{B_{\delta_0}(0)}\cup \overline{(B_{R}(0)\setminus B_{R-\delta_0}(0))}\}\times[0,T]\right).
$$
Observe that
$$
\max_{\{\overline{B_{\delta_0}(0)}\cup \overline{(B_{R}(0)\setminus B_{R-\delta_0}(0))}\}\times[0,T]}|Du|=\max_{\{\overline{B_{\delta_0}(0)}\cup \overline{(B_{R}(0)\setminus B_{R-\delta_0}(0))}\}\times[0,T]}|Du^*|=\max_{\overline{J_T\setminus J^*_T}}|v_s^*|\leq\lambda^-
$$
by (\ref{3-5}). Since $a\equiv a^*$ on $[0,(\lambda^-)^2]$ and $u^*$ solves (\ref{2-2-2}), it follows that $u$ satisfies (b). At the end of this proof, we will check that $\mathcal{C}_\td$ has infinitely many elements $\Phi=(v,\vp)$. The first component $v$ in every $\Phi\in\mathcal{C}_\td$ is then extended to be the common $v^*$ on $\overline{J_T\setminus J^*_T}$, so that each corresponding $u$ satisfies (b) with the same $\delta_0>0$.\\

\underline{\textbf{(c):}} By (\ref{3-3}) and (\ref{3-8}), we have
$$
v(s,0)=v^*(s,0)=v_0(s)\,\,\textrm{ for every } s\in\bar{J}.
$$
Thus from the definitions of $u$ and $v_0$,
$$
u(x,0)=v(|x|,0)=v_0(|x|)=u_0(x)\,\,\textrm{ for every } x\in\bar{\Omega}.
$$

\underline{\textbf{(d):}} This follows immediately from the observation in (b).\\

\underline{\textbf{(e):}} Assume $M\ge 1$; then $\lambda=M+\epsilon.$ Let $(s,t)\in J_T$ be any point such that
$$
\td\Phi(s,t)\in K^{n-1}_{\lambda,l_0}(s,v(s,t))\,\,\textrm{ and }\,\,v_s(s,t)\,\textrm{ exists in }\RR.
$$
Then for every $x\in\Omega$ with $|x|=s$, $Du(x,t)$ exists in $\RR^n$,
$$
|Du(x,t)|=|v_s(s,t)|
$$
by the radial symmetry of $u$, and
$
|v_s(s,t)|\leq\lambda=M+\epsilon
$ by (\ref{3-10}).
Note also that these hold for a.e. $(s,t)\in J_T$, so that
$$
||Du||_{L^\infty(\Omega_T;\RR^n)}=||v_s||_{L^\infty(J_T)}\leq M+\epsilon.
$$

\underline{\textbf{(f):}} This follows easily by taking $\xi\equiv 1$ in (\ref{strong}), which remains valid even when $\Omega_T$ and $T$ are replaced by $\Omega_t$ and $t$ with $0<t\leq T$, respectively.
 \\

 Finally, it remains to check that $\mathcal{C}_\td$ is an infinite set. Suppose on the contrary that $\mathcal{C}_\td$ is finite. Since $\mathcal{C}_\td$ and $\PP^{n-1}_{\lambda,l_0}$ are dense in $X$, we then have $\mathcal{C}_\td=X=\PP^{n-1}_{\lambda,l_0}$. So $\Phi^* \in\PP^{n-1}_{\lambda,l_0}=\mathcal{C}_\td$. By the above, $\Phi^*$ satisfies (\ref{3-10}), that is,
$$
\td\Phi^*(s,t)\in K^{n-1}_{\lambda,l_0}(s,v^*(s,t))\quad\textrm{for a.e.}\,\,(s,t)\in J_T,
$$
and so
$$
(v_s^*(s,t),s^{n-1}\sigma^*(v_s^*(s,t)))=(v_s^*(s,t),\vp_t^*(s,t))\in\tilde{K}^{n-1}_\lambda(s)\quad\textrm{for a.e.}\,\,(s,t)\in J^*_T.
$$
This is equivalent to saying that
$$
(v_s^*(s,t),\sigma^*(v_s^*(s,t)))\in\tilde{K}_\lambda\quad\textrm{for a.e.}\,\,(s,t)\in J^*_T.
$$
By the definition of the set $\tilde{K}_\lambda$, we have
\begin{equation}\label{cont1}
\sigma^*(v_s^*(s,t))=\sigma(v_s^*(s,t))\quad\textrm{for a.e.}\,\,(s,t)\in J^*_T.
\end{equation}
On the other hand, it follows from (\ref{cont2}) and (\ref{3-5}) with $\lambda^-<M$ that choosing a $\delta>0$ so small that
$$
\sigma^*(p)\neq\sigma(p)\quad \forall p\in[-M,-M+\delta]\cup[M-\delta,M],
$$
we have
$$
v^*_s\in[-M,-M+\delta]\cup[M-\delta,M]
$$
on some set $W=W(\delta)\subset J^*_T$ of positive measure. Thus for each $(s,t)\in W$,
$$
\sigma^*(v_s^*(s,t))\neq\sigma(v_s^*(s,t)),
$$
and this is a contradiction to (\ref{cont1}). Therefore, $\mathcal{C}_\td$ is an infinite set.

The theorem is now proved.

\begin{proof}[Remark] Assume $\max_{\bar\Omega}|Du_0|=M<1.$ We select a different $\lambda >1$ such that $\lambda^-=M$ and then select $M'\in (M,\lambda).$ With this choice of $(M',\lambda)$ in place of $(M,\lambda)$ in Lemma \ref{2-3-2}, we construct a function $\sigma^*(p)$. Define $a^*(p):=\sigma^*(\sqrt p)/\sqrt{p}$ for each $p>0$. Then $a^*(p)= a(p)$ for every $p\in(0,M^2]$ and  the functions $a^*$ and $\sigma^*$ satisfy the hypotheses in Theorem \ref{2-2-1}. Therefore, for the given initial condition $u_0$, the problem (\ref{2-2-2}) has a unique radial solution $w^*\in C^{3+\alpha,1+\alpha/2}(\bar\Omega_T).$    Then $w^*$ is also a classical solution to  problem (\ref{1-1}). However, Theorem \ref{1-8} asserts  that, even in this case,  the problem (\ref{1-1}) still has infinitely many weak solutions.
\end{proof}

\section{Auxiliary functions}\label{s4}

In this section, we construct some auxiliary functions that are needed to prove the density theorem, Theorem \ref{2-3-1}.

\subsection{Construction lemma}
We begin  with the following useful result.

\begin{lem}[Construction Lemma]\label{4-3}
Let $a>0$, $b>0$, $L>0$, $s_0>0$, and let $m\geq 0$ be an integer. Let $s_1,\,s_2\in C^1(0,L)$ be two functions satisfying
\begin{equation}\label{4-4}
\left\{ \begin{array}{l}
  0<s_1(t)<s_0<s_2(t),\\
  \frac{s_1(t)+s_2(t)}{2}=s_0,
\end{array} \right.\quad \forall\, t\in (0,L).
\end{equation}
Let $D\subset\RR^2$ be the bounded open set, defined by
$$
D:=\{(s,t)\in\RR^2:0<t<L, s_1(t)<s<s_2(t)\}.
$$
For each $(s,t)\in D$, define
\begin{eqnarray*}
F(s,t)&:=&\int^{\frac{as_1(t)+bs}{a+b}}_{s_1(t)}\tau^m[-a(\tau-s_1(t))]d\tau+\int^{\frac{as_2(t)+bs}{a+b}}_{\frac{as_1(t)+bs}{a+b}}\tau^mb(\tau-s)d\tau \\
&&+\int^{s_2(t)}_{\frac{as_2(t)+bs}{a+b}}\tau^m[-a(\tau-s_2(t))]d\tau.
\end{eqnarray*}
Then we have the following:
\begin{itemize}
\item[(a)] $F\in C^1(D)$,
\item[(b)] there exists a unique function $\tilde{s}\in C^1(0,L)$ such that
$$
s_1(t)<\tilde{s}(t)<s_2(t),\,\,F(\tilde{s}(t),t)=0\quad \forall\,  t\in(0,L),
$$
\item[(c)] $|\tilde{s}'(t)|\leq\left[1+\left(\frac{s_2(t)}{s_1(t)}\right)^m\right]|s_1'(t)|$ for all $t\in (0,L)$,
\item[(d)] if $s_1\in C^1([0,L])$, $s_1(0)>0$, and $s_1(L)>0$, then $\tilde{s}\in C^1([0,L])$.
\end{itemize}
\end{lem}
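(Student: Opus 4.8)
The plan is to reduce everything to two explicit differentiations of $F$, each of which collapses through a chain of cancellations, after which the implicit function theorem and one elementary inequality finish the job. Throughout I abbreviate $s_1=s_1(t)$, $s_2=s_2(t)$ for fixed $t$ and introduce the two interior nodes
\[
\alpha=\alpha(s,t):=\frac{a s_1+b s}{a+b},\qquad \beta=\beta(s,t):=\frac{a s_2+b s}{a+b}.
\]
For $s_1<s<s_2$ one checks $s_1<\alpha<s<\beta<s_2$, so the three integrals defining $F$ run over the consecutive subintervals $[s_1,\alpha]$, $[\alpha,\beta]$, $[\beta,s_2]$ of $[s_1,s_2]$; moreover the three linear factors $-a(\tau-s_1)$, $b(\tau-s)$, $-a(\tau-s_2)$ agree at $\tau=\alpha$ and at $\tau=\beta$ and vanish at $\tau=s_1$ and $\tau=s_2$. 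Hence $F(s,t)=\int_{s_1}^{s_2}\tau^m\,w(\tau;s,t)\,d\tau$ with $w(\cdot;s,t)$ continuous and piecewise linear on $[s_1,s_2]$, vanishing at the two endpoints; this is what makes the later boundary terms disappear.

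For (a): since $s_1,s_2\in C^1(0,L)$, the four limits of integration are $C^1$ in $(s,t)$ on $D$ and the integrands are polynomials in $\tau$ with $C^1$ coefficients, so the Leibniz rule gives $F\in C^1(D)$. Differentiating in $s$, the boundary contributions at the moving nodes $\alpha,\beta$ cancel by continuity of $w$ across them, those at $s_1,s_2$ vanish, and only the explicit $s$-dependence of the middle factor $b(\tau-s)$ survives, so
\[
F_s(s,t)=-b\int_{\alpha}^{\beta}\tau^m\,d\tau<0\qquad\text{on }D,
\]
since $0<\alpha<\beta$. For (b): fixing $t\in(0,L)$ and viewing $s\mapsto F(s,t)$ on $[s_1,s_2]$ (the defining formula makes sense there), at $s=s_1$ the first integral is empty and the remaining two have strictly positive integrands, so $F(s_1,t)>0$, while at $s=s_2$ the last integral is empty and the other two have strictly negative integrands, so $F(s_2,t)<0$. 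Strict monotonicity in $s$ then gives a unique root $\tilde s(t)\in(s_1,s_2)$, and $F_s(\tilde s(t),t)\neq0$ lets the implicit function theorem promote $\tilde s$ to a $C^1(0,L)$ function.

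For (c): I would differentiate $F$ in $t$, now using $s_2'(t)=-s_1'(t)$ (from $s_1+s_2\equiv 2s_0$). Again the node terms at $\alpha,\beta$ cancel in pairs, the endpoint terms at $s_1,s_2$ vanish, and the middle factor has no explicit $t$-dependence, so only the $t$-derivatives of the two outer factors remain:
\[
F_t(s,t)=a\,s_1'(t)\left(\int_{s_1}^{\alpha}\tau^m\,d\tau-\int_{\beta}^{s_2}\tau^m\,d\tau\right).
\]
Differentiating $F(\tilde s(t),t)\equiv0$ gives $\tilde s'(t)=-F_t/F_s$ at $(\tilde s(t),t)$, i.e.
\[
\tilde s'(t)=\frac{a\,s_1'(t)}{b}\cdot\frac{\int_{s_1}^{\alpha}\tau^m d\tau-\int_{\beta}^{s_2}\tau^m d\tau}{\int_{\alpha}^{\beta}\tau^m d\tau}.
\]
To bound the quotient I use monotonicity of $\tau\mapsto\tau^m$ on $(0,\infty)$ together with the lengths $\alpha-s_1=\frac{b(s-s_1)}{a+b}$, $\beta-\alpha=\frac{a(s_2-s_1)}{a+b}$, $s_2-\beta=\frac{b(s_2-s)}{a+b}$: with $s=\tilde s(t)$,
\[
\frac{a}{b}\cdot\frac{\int_{s_1}^{\alpha}\tau^m d\tau}{\int_{\alpha}^{\beta}\tau^m d\tau}\le\frac{a}{b}\cdot\frac{\alpha^m(\alpha-s_1)}{\alpha^m(\beta-\alpha)}=\frac{s-s_1}{s_2-s_1}\le1,
\]
\[
\frac{a}{b}\cdot\frac{\int_{\beta}^{s_2}\tau^m d\tau}{\int_{\alpha}^{\beta}\tau^m d\tau}\le\frac{a}{b}\cdot\frac{s_2^m(s_2-\beta)}{\alpha^m(\beta-\alpha)}=\Big(\frac{s_2}{\alpha}\Big)^m\frac{s_2-s}{s_2-s_1}\le\Big(\frac{s_2}{s_1}\Big)^m,
\]
the last step because $\alpha>s_1$. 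Since both integrals are nonnegative, the modulus of the quotient is at most the sum of these two bounds, which delivers exactly $|\tilde s'(t)|\le\big[1+(s_2(t)/s_1(t))^m\big]\,|s_1'(t)|$. Finally, (d) is the boundary version of (b): if $s_1\in C^1([0,L])$ with $s_1(0),s_1(L)>0$, then (using $s_2=2s_0-s_1\in C^1([0,L])$ too) all data extend $C^1$ up to $t\in\{0,L\}$, the strict inequalities $F(s_1(t),t)>0>F(s_2(t),t)$ and $F_s(\tilde s(t),t)<0$ persist there, so the implicit function theorem applied on the closed interval produces $\tilde s\in C^1([0,L])$, the bound of (c) (extended by continuity) fixing the one-sided derivatives at the endpoints.

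I expect the real work to be the two cancellation identities—verifying that $F_s=-b\int_\alpha^\beta\tau^m\,d\tau$ and that $F_t$ collapses to the displayed two-term formula—since the naive differentiation produces many boundary terms, most of which must be checked to cancel. Once those are in hand the remainder is bookkeeping; the only other delicate point is organizing the estimate in (c) so that the constants $a,b$ cancel and the bound emerges in the clean form $1+(s_2/s_1)^m$, for which the $s$-independent length identity $\beta-\alpha=\frac{a(s_2-s_1)}{a+b}$ is the crucial ingredient.
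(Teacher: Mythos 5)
Your treatment of (a)--(c) is correct and is a genuinely cleaner route than the paper's. Where the paper first evaluates $F$ in closed polynomial form (its equation (4.5)) and then differentiates the resulting algebraic identity (using the Mean Value Theorem for (c)), you differentiate the integral expression directly, exploit the continuity of the piecewise-linear weight $w$ at the nodes $\alpha,\beta$ to kill the boundary terms, and arrive at the compact formulas $F_s=-b\int_\alpha^\beta\tau^m d\tau$ and $F_t=a s_1'(\int_{s_1}^\alpha\tau^m d\tau-\int_\beta^{s_2}\tau^m d\tau)$, from which the estimate in (c) drops out with one monotonicity comparison. That is a nice, shorter argument and I have verified the cancellations and the two bounds; both give exactly the constants $1$ and $(s_2/s_1)^m$.

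However, part (d) has a genuine gap. You assert that ``the strict inequalities $F(s_1(t),t)>0>F(s_2(t),t)$ and $F_s(\tilde s(t),t)<0$ persist'' at the endpoints, so the implicit function theorem applies on the closed interval. This is simply false in the degenerate case where the interval collapses at the boundary, i.e.\ when $s_1(0)=s_2(0)=s_0$ (or at $t=L$). The hypotheses of (d) do \emph{not} exclude this: they only require $s_1(0)>0$ and $s_1(L)>0$, while the strict inequality $s_1(t)<s_0<s_2(t)$ is assumed only for $t\in(0,L)$. In the degenerate case every integral in $F$ is empty, so $F(\cdot,0)\equiv 0$ trivially, and your own formula gives $F_s(s_0,0)=-b\int_{s_0}^{s_0}\tau^m d\tau=0$; the IFT is inapplicable, and ``$F(s_1)>0>F(s_2)$'' becomes $0=0$. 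This degenerate corner is not a pathology to be waved away: it is exactly what occurs in the application (the diamond $\tilde D^k_i$ of Section \ref{s4} is pinched at $t=t_0\pm L$), which is why the lemma's hypotheses were carefully stated so as to allow it. Moreover, ``the bound of (c) extended by continuity fixing the one-sided derivatives'' does not work either: the inequality in (c) only shows $\tilde s'$ is bounded near the endpoint (hence $\tilde s$ is Lipschitz there), not that $\tilde s'(t)$ converges as $t\to 0^+$, which is what $C^1$ up to the boundary requires. The paper's proof of (d) devotes most of its effort precisely to this: it splits off the factor $f(s_1,s_2)=(s_1^{m+2}-s_2^{m+2})/(s_1-s_2)$, differentiates the nondegenerate identity $f(as_1+b\tilde s,as_2+b\tilde s)=(a+b)^{m+1}f(s_1,s_2)$, and uses the symmetry of $f$ to show that every subsequential limit of $\tilde s'(t)$ as $t\to 0^+$ is $0$; combined with the boundedness from (c), this gives $\tilde s'(t)\to 0$ and hence $\tilde s\in C^1([0,L))$. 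Your proposal contains none of this, so (d) remains unproved in the only case that matters.
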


\begin{proof}
\underline{\textbf{(a):}} Elementary computation shows that for each $(s,t)\in D$,
\begin{equation}\label{4-5}
         F(s,t)= c_m \left [ \frac{(as_1(t)+bs)^{m+2}}{(a+b)^{m+1}}-as_1(t)^{m+2}
         -\frac{(as_2(t)+bs)^{m+2}}{(a+b)^{m+1}} +as_2(t)^{m+2} \right],
\end{equation}
where $c_m= \frac{1}{m+1}-\frac{1}{m+2}>0$.  Since $s_1,s_2\in C^1(0,L)$, it follows immediately from (\ref{4-5}) that $F\in C^1(D).$

\underline{\textbf{(b):}} For each $t\in(0,L)$, using (\ref{4-5}), it can be checked (mainly from the convexity of function $s^{m+2}$ on $s>0$) that $F(s_1(t),t)>0$ and $F(s_2(t),t)<0.$ Moreover, on $D$,
$$
\partial_s F(s,t)=\frac{-b c_m}{(a+b)^{m+1}} [(as_2(t)+bs)^{m+1}-(as_1(t)+bs)^{m+1}]<0,
$$
since $s_2(t)>s_1(t)>0$. In particular, $\partial_s F(s,t)\neq 0$ for every $(s,t)\in D$. Therefore, by the Intermediate Value Theorem, for each $t \in (0,L)$, there exists a unique $\tilde s(t)\in (s_1(t),s_2(t))$ such that
\[
F(\tilde s(t),t)=0.
\]
Furthermore, by the Implicit Function Theorem, it follows that $\tilde{s}\in C^1 (0,L),$
and so (b) is proved.

\underline{\textbf{(c):}} Clearly, by (\ref{4-5}), $\tilde s(t)$ satisfies the equation
\begin{equation}\label{4-6}
\begin{array}{c}
  0=(as_1(t)+b\tilde{s}(t))^{m+2}-a(a+b)^{m+1}s_1(t)^{m+2} \\
  -(as_2(t)+b\tilde{s}(t))^{m+2}+a(a+b)^{m+1}s_2(t)^{m+2}
\end{array}
\end{equation}
for each $t\in(0,L)$. Taking derivatives on both sides in (\ref{4-6}) with respect to $t$, we obtain
\begin{eqnarray*}
\tilde{s}'(t) &=& \frac{a}{b}\Big\{\frac{s_1'(t)[(as_1(t)+b\tilde{s}(t))^{m+1}-(as_1(t)+bs_1(t))^{m+1}]}{(as_2(t)+b\tilde{s}(t))^{m+1}-(as_1(t)+b\tilde{s}(t))^{m+1}}\\
&& +\frac{s_2'(t)[(as_2(t)+bs_2(t))^{m+1}-(as_2(t)+b\tilde{s}(t))^{m+1}]}{(as_2(t)+b\tilde{s}(t))^{m+1}-(as_1(t)+b\tilde{s}(t))^{m+1}}\Big\}
\end{eqnarray*}
for each $t\in(0,L)$. Applying the Mean Value Theorem, we have
\begin{eqnarray*}
\tilde{s}'(t) &=& s_1'(t)\left(\frac{as_1(t)+b\bar{s}_1(t)}{a\bar{s}_3(t)+b\tilde{s}(t)}\right)^m\frac{\tilde{s}(t)-s_1(t)}{s_2(t)-s_1(t)}\\
&& + s_2'(t)\left(\frac{as_2(t)+b\bar{s}_2(t)}{a\bar{s}_3(t)+b\tilde{s}(t)}\right)^m\frac{s_2(t)-\tilde{s}(t)}{s_2(t)-s_1(t)}
\end{eqnarray*}
for some $\bar{s}_1(t),\,\bar{s}_2(t),\,\bar{s}_3(t)\in\RR$ with
$$
s_1(t)<\bar{s}_1(t)<\tilde{s}(t) <\bar{s}_2(t)<s_2(t),\quad s_1(t)<\bar{s}_3(t)<s_2(t).
$$
So
$$
|\tilde{s}'(t)|\leq|s_1'(t)|+|s_1'(t)|\left(\frac{s_2(t)}{s_1(t)}\right)^m=\left[1+\left(\frac{s_2(t)}{s_1(t)}\right)^m\right]|s_1'(t)|,
$$
since $s_2'(t)=-s_1'(t)$ by (\ref{4-4}). Thus (c) is proved.

\underline{\textbf{(d):}} Finally to prove (d), assume
\begin{equation}\label{bdry1}
s_1\in C^1([0,L))\,\,\textrm{ and }\,\,s_1(0)>0.
\end{equation}
We will show that $\tilde{s}\in C^1([0,L))$. If $s_1\in C^1((0,L])$ and $s_1(L)>0$, we can prove that $\tilde{s}\in C^1((0,L])$ exactly in the same way. Returning to the assumption (\ref{bdry1}), note $0<s_1(0)\leq s_0$ by (\ref{4-4}). If $0<s_1(0)<s_0$, then we can extend $s_1$ and $s_2$ from $[0,L)$ to $(-\delta,L)$ for some $\delta>0$ in a way that $s_1,s_2\in C^1((-\delta,L))$ satisfy (\ref{4-4}), and we can apply the previous argument to show $\tilde{s}\in C^1((-\delta,L))$. So let us assume $s_1(0)=s_0$; then, by (\ref{4-4}), $s_2(0)=s_0.$  From (\ref{4-6}), we have
$$\lim_{t\RA 0^+}\tilde{s}(t)=s_1(0)=s_2(0)=s_0.$$
We claim that
\begin{equation}\label{claim}
\lim_{t\RA 0^+}\tilde{s}'(t)=0,
\end{equation}
 and so $\tilde{s}\in C^1([0,L))$. To prove this claim, we   rewrite (\ref{4-6}) as
\begin{equation}\label{4-7}
f(as_1(t)+b\tilde{s}(t),as_2(t)+b\tilde{s}(t))=(a+b)^{m+1}f(s_1(t),s_2(t)) \quad \forall\,t\in (0,L),
\end{equation}
 where $f(s_1,s_2)$ is the polynomial in $s_1$ and  $s_2$ determined through
\[
s_1^{m+2}-s_2^{m+2} =(s_1-s_2)f(s_1,s_2)\quad \forall\,\, (s_1, s_2)\in \RR^2.
\]
 Note that $f(s_1,s_2)$ is symmetric in $(s_1, s_2)$ and $\partial_{1}f(s_1,s_2)>0$ for all $s_1>0$ and $s_2>0$.
To prove (\ref{claim}), note that, by (c), $\tilde{s}'(t)$ is bounded on $(0,L/2)$,  and  it suffices to show that if  $\beta:=\lim_{k \RA\infty}\tilde{s}'(t_{k})$ exists along a sequence $t_k\to 0^+$, then $\beta=0.$ Let $\alpha=s_1'(0^+);$ then $s_2'(0^+)=-\alpha.$  Taking derivatives on both sides in (\ref{4-7}) with respect to $t$, we have
\[
\partial_1 f(as_2(t)+b\tilde{s}(t),as_1(t)+b\tilde{s}(t))\cdot(as_2'(t)+b\tilde{s}'(t))
\]
\[
+\partial_2 f(as_2(t)+b\tilde{s}(t),as_1(t)+b\tilde{s}(t))\cdot(as_1'(t)+b\tilde{s}'(t))
\]
\[
=(a+b)^{m+1}[\partial_1 f(s_2(t),s_1(t))s_2'(t)+\partial_2 f(s_2(t),s_1(t))s_1'(t)]
\]
for each $t\in (0,L)$. Letting $t=t_k\to 0^+$, we have
$$
\partial_1 f(as_0+bs_0,as_0+bs_0)\cdot(-a\alpha+b\beta)
+\partial_2 f(as_0+bs_0,as_0+bs_0)\cdot(a\alpha+b\beta)
$$
$$
=(a+b)^{m+1}[\partial_1 f(s_0,s_0)(-\alpha)+\partial_2 f(s_0,s_0)\alpha]=0,
$$
by the symmetry of $f$. This yields that $ 2\beta b\partial_1 f(as_0+bs_0,as_0+bs_0) =0$, so that $\beta=0$, as wished. Hence (\ref{claim}) follows, and (d) is proved.
\end{proof}

\subsection{Construction of auxiliary functions}

We are now ready to construct auxiliary functions that will be used as local gradient modifiers in the proof of Theorem \ref{2-3-1}.
Towards this goal, let $a>0$, $b>0$, $L>0$, $s^2_0>s^1_0>0$, $s_0:=\frac{s^2_0+s^1_0}{2}$, and let $m\geq 0$ be an integer. Define
$$
s_1(t):=\frac{s^2_0-s^1_0}{2L} t+s^1_0\,\,\textrm{ and }\,\,s_2(t):=\frac{s^1_0-s^2_0}{2L} t+s^2_0
$$
for each $t\in[0,L]$. (See Figure \ref{fig3} with $t_0=0$.)
\begin{figure}[ht]
\begin{center}
\includegraphics[scale=0.55]{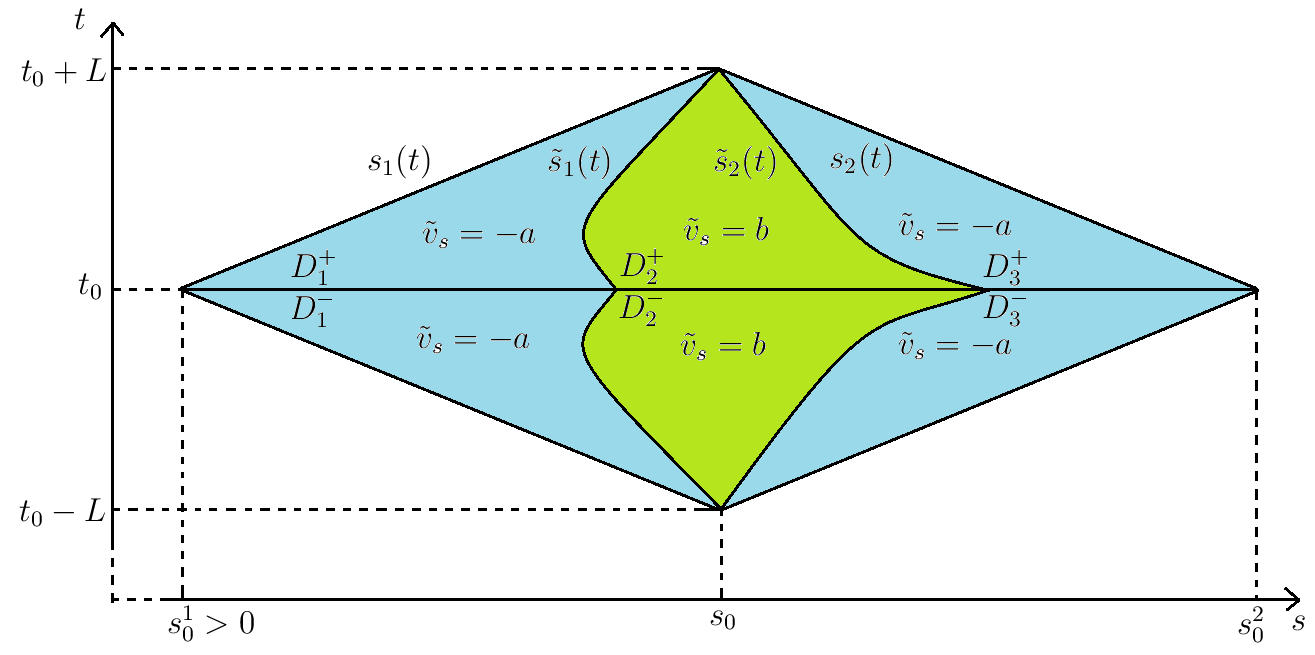}
\caption{The $s$-derivatives of $\tilde{v}$ in Lemma \ref{4-16} on the six regions separated by nonlinear piecewise $C^1$ curves $\tilde{s}_1(t)$ and $\tilde{s}_2(t)$}
\label{fig3}
\end{center}
\end{figure}

Let $D^+\subset\RR^2$ be the bounded open set, given by
$$
D^+:=\{(s,t)\in\RR^2:0<t<L,\,s_1(t)<s<s_2(t)\}.
$$
For each $(s,t)\in D^+$, define $F(s,t)$ as in Lemma \ref{4-3}, so that there exists  a unique $\tilde{s}\in C^1([0,L])$ such that
\begin{equation}\label{4-8}
\left\{ \begin{array}{ll}
  s_1(t)<\tilde{s}(t)<s_2(t),\,F(\tilde{s}(t),t)=0 & \forall t\in[0,L), \\
  |\tilde{s}'(t)|\leq\left[1+\left(\frac{s_2(t)}{s_1(t)}\right)^m\right]|s_1'(t)| & \forall t\in[0,L].
\end{array} \right.
\end{equation}
Let $\tilde{s}_1,\tilde{s}_2:[0,L]\RA\RR$ be given by
$$
\tilde{s}_i(t):=\frac{as_i(t)+b\tilde{s}(t)}{a+b}\,\,\,\,\,\forall t\in[0,L],\,\forall i\in\{1,2\},
$$
so that $\tilde{s}_1,\tilde{s}_2\in C^1([0,L])$ and that by (\ref{4-8}),
$$
s_1(t)<\tilde{s}_1(t)<\tilde{s}(t)<\tilde{s}_2(t)<s_2(t)\,\,\,\,\,\forall t\in[0,L).
$$
Let $D^+_1,D^+_2,D^+_3\subset\RR^2$ be the bounded open sets, defined by
\[
D^+_1:=\{(s,t)\in\RR^2:0<t<L,\,s_1(t)<s<\tilde{s}_1(t)\},
\]
\[
D^+_2:=\{(s,t)\in\RR^2:0<t<L,\,\tilde{s}_1(t)<s<\tilde{s}_2(t)\},
\]
\[
D^+_3:=\{(s,t)\in\RR^2:0<t<L,\,\tilde{s}_2(t)<s<s_2(t)\},
\]
so that these are disjoint open subsets of $D^+$ with
$$
\Big|D^+\setminus\cup_{i=1}^3 D^+_i\Big|=0.\quad\textrm{(See Figure \ref{fig3} with $t_0=0$.)}
$$
Let $\tilde{v}:\bar{D}^+\RA\RR$ be the function, defined by
\begin{equation*}
\tilde{v}(s,t):=\left\{ \begin{array}{ll}
  -a(s-s_1(t)) & \forall (s,t)\in\bar{D}^+_1 \\
  b(s-\tilde{s}(t)) & \forall (s,t)\in\bar{D}^+_2 \\
  -a(s-s_2(t)) & \forall (s,t)\in\bar{D}^+_3.
\end{array} \right.
\end{equation*}
It is easily checked that $\tilde{v}:\bar{D}^+\RA\RR$ is well-defined and that $\tilde{v}\in W^{1,\infty}(D^+)$. It also follows from Lemma \ref{4-3} that $\tilde{v}\in C^1(\bar{D}^+_i)$ for $i=1,2,3$. If $0\leq t\leq L$, then
$$
\tilde{v}(s_1(t),t)=\tilde{v}(s_2(t),t)=0.
$$
Let $t\in[0,L]$. Then
$$
\max_{s\in[s_1(t),s_2(t)]}|\tilde{v}(s,t)|=\max\{|-a(\tilde{s}_1(t)-s_1(t))|,|-a(\tilde{s}_2(t)-s_2(t))|\}
$$
$$
=\max\left\{\left|-a\left(\frac{as_1(t)+b\tilde{s}(t)}{a+b}-s_1(t)\right)\right|,\left|-a\left(\frac{as_2(t)+b\tilde{s}(t)}{a+b}-s_2(t)\right)\right|\right\}
$$
$$
=\max\left\{\frac{ab}{a+b}(\tilde{s}(t)-s_1(t)),\frac{ab}{a+b}(s_2(t)-\tilde{s}(t))\right\}\leq\frac{ab}{a+b}(s^2_0-s^1_0).
$$
Hence
$$
\max_{\bar{D}^+}|\tilde{v}|\leq\frac{ab}{a+b}(s^2_0-s^1_0)\leq\frac{a+b}{4}(s^2_0-s^1_0).
$$
Define
$$
D^-:=\{(s,t)\in\RR^2:(s,-t)\in D^+\},
$$
$$
D^-_i:=\{(s,t)\in\RR^2:(s,-t)\in D^+_i\}\,\,\,\,\forall i\in\{1,2,3\},
$$
$$
D:=\mathrm{int}(\overline{D^+\cup D^-}).
$$
We do the even extensions for $s_1,\tilde{s}_1,\tilde{s},\tilde{s}_2,s_2:[-L,L]\RA\RR$ and for $\tilde{v}:\bar{D}\RA\RR$ along the $t$-axis, so that we have from the above observations that
\begin{equation}\label{4-9}
\left\{ \begin{array}{l}
  \tilde{v}\in W^{1,\infty}_0(D), \\
  \tilde{v}\in C^1(\bar{D}^\pm_i)\,\,\,\forall i\in\{1,2,3\}, \\
  \max_{\bar{D}}|\tilde{v}|\leq\frac{a+b}{4}(s^2_0-s^1_0).
\end{array} \right.
\end{equation}
It follows from (\ref{4-8}) that for each $t\in[0,L]$,
\begin{equation}\label{4-10}
\int_{s_1(t)}^{s_2(t)}\tau^m\tilde{v}(\tau,t)d\tau=F(\tilde{s}(t),t)=0,
\end{equation}
and this equality is valid for all $t\in[-L,L]$ by the definition $\tilde{v}$.
Note also that
\begin{equation}\label{4-11}
\td\tilde{v}(s,t)=\left\{ \begin{array}{ll}
  (-a,as_1'(t)) & \textrm{ if }(s,t)\in D^+_1, \\
  (b,-b\tilde{s}'(t)) & \textrm{ if }(s,t)\in D^+_2, \\
  (-a,as_2'(t)) & \textrm{ if }(s,t)\in D^+_3, \\
  (-a,-as_1'(-t)) & \textrm{ if }(s,t)\in D^-_1, \\
  (b,b\tilde{s}'(-t)) & \textrm{ if }(s,t)\in D^-_2, \\
  (-a,-as_2'(-t)) & \textrm{ if }(s,t)\in D^-_3.
\end{array} \right.
\end{equation}
Also the second of (\ref{4-8}) implies that
$$
|\tilde{s}'(t)|\leq\left[1+\left(\frac{s^2_0}{s^1_0}\right)^m\right]\frac{s^2_0-s^1_0}{2L}\,\,\,\,\,\forall t\in[-L,L].
$$
Combining this with (\ref{4-11}), we have
\begin{equation}\label{4-12}
|\partial_t\tilde{v}(s,t)|\leq\max\{a,b\}\left[1+\left(\frac{s^2_0}{s^1_0}\right)^m\right]\frac{s^2_0-s^1_0}{2L}\,\,\,\,\,\forall (s,t)\in\bigcup_{i=1}^3 (D^+_i\cup D^-_i).
\end{equation}
Using the third of (\ref{4-9}), we obtain
\begin{equation}\label{4-13}
\max_{(s,t)\in\bar{D}}\left|\int^s_{s_1(t)}\tau^m\tilde{v}(\tau,t)d\tau\right|\leq\frac{a+b}{4}(s^2_0)^m(s^2_0-s^1_0)^2.
\end{equation}
One can also easily check that
\begin{equation}\label{4-14}
\frac{\partial}{\partial t}\left(\int^s_{s_1(t)}\tau^m\tilde{v}(\tau,t)d\tau\right)=\int^s_{s_1(t)}\tau^m\partial_t\tilde{v}(\tau,t)d\tau\,\,\,\,\,\forall (s,t)\in\bigcup_{i=1}^3 (D^+_i\cup D^-_i).
\end{equation}

Fix any $t_0\in\RR$. We now translate everything constructed above along the $t$-axis by $t_0$. So we define
\begin{equation}\label{4-15}
\begin{array}{rcl}
  D(s^1_0,s^2_0,t_0,L) & := & \{(s,t)\in\RR^2:(s,t-t_0)\in D\}, \\
  D^\pm_i(s^1_0,s^2_0,t_0,L) & := & \{(s,t)\in\RR^2:(s,t-t_0)\in D^\pm_i\}\,\,\,\,\,\forall i\in\{1,2,3\}, \\
  s_j(s^1_0,s^2_0,t_0,L;t) & := & s_j(t-t_0)\,\,\,\,\,\forall t\in[t_0-L,t_0+L],\,\,\forall j\in\{1,2\}, \\
  \tilde{v}(-a,b,s^1_0,s^2_0,t_0,L;s,t) & := & \tilde{v}(s,t-t_0)\,\,\,\,\,\forall (s,t)\in\overline{D(s^1_0,s^2_0,t_0,L)}.
\end{array}
\end{equation}
Here is the right spot of mentioning a rather delicate feature of our construction. We should prohibit the auxiliary function $\tilde{v}$ in (\ref{4-9}) from being translated in the $s$-axis as any $s$-translation will destroy the key properties to act as auxiliary functions for local gluing in the proof of the density theorem, Theorem \ref{2-3-1}. Accordingly, we construct $\tilde{v}$ on the positive $s$-axis from the start and allow translation in the $t$-axis only as in (\ref{4-15}).

As a conclusion of this section, we suppress the letters $-a,\,b,\,s^1_0,\,s^2_0,\,t_0,\,L$ in (\ref{4-15}) for a notational simplicity and summarize the properties of $\tilde{v}$ inherited from (\ref{4-9}), (\ref{4-10}), (\ref{4-11}), (\ref{4-12}), (\ref{4-13}), and (\ref{4-14}) as follows. (See Figure \ref{fig3}.)
\begin{lem}\label{4-16}
The function $\tilde{v}:\bar{D}\RA\RR$ constructed in (\ref{4-15}) satisfies the following:
\begin{itemize}
\item[(a)] $\tilde{v}\in W^{1,\infty}_0(D)$,
\item[(b)] $\tilde{v}\in C^1(\bar{D}^\pm_i)\,\,\,\,\,\forall i=1,2,3$,
\item[(c)] $\partial_s\tilde{v}(s,t)=\left\{\begin{array}{cl}
                                       -a & \forall (s,t)\in D^+_1\cup D^-_1\cup D^+_3\cup D^-_3 \\
                                       b & \forall (s,t)\in D^+_2\cup D^-_2,
                                     \end{array}\right.$
\item[(d)] $|\partial_t\tilde{v}(s,t)|\leq\max\{a,b\}\left[1+\left(\frac{s^2_0}{s^1_0}\right)^m\right]\frac{s^2_0-s^1_0}{2L}\,\,\,\,\,\forall (s,t)\in\bigcup_{i=1}^3(D^+_i\cup D^-_i),$
\item[(e)] $\frac{\partial}{\partial t}\left(\int^s_{s_1(t)}\tau^m\tilde{v}(\tau,t)d\tau\right)=\int^s_{s_1(t)}\tau^m\partial_t\tilde{v}(\tau,t)d\tau\,\,\,\,\,\forall (s,t)\in\bigcup_{i=1}^3 (D^+_i\cup D^-_i)$,
\item[(f)] $\int_{s_1(t)}^{s_2(t)}\tau^m\tilde{v}(\tau,t)d\tau=0\,\,\,\,\,\forall t\in[t_0-L,t_0+L]$,
\item[(g)] $\max_{\bar{D}}|\tilde{v}|\leq\frac{a+b}{4}(s^2_0-s^1_0)$,
\item[(h)] $\max_{(s,t)\in\bar{D}}\left|\int^s_{s_1(t)}\tau^m\tilde{v}(\tau,t)d\tau\right|\leq\frac{a+b}{4}(s^2_0)^m(s^2_0-s^1_0)^2.$
\end{itemize}
\end{lem}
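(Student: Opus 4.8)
The plan is to deduce the lemma directly from the explicit construction carried out just before its statement, by noting that all eight assertions are invariant under the $t$-translation appearing in (\ref{4-15}). Concretely, the map $(s,t)\mapsto(s,t-t_0)$ is a measure-preserving $C^1$-diffeomorphism of $\RR^2$ that carries $D(s^1_0,s^2_0,t_0,L)$ onto $D$, each $D^{\pm}_i(s^1_0,s^2_0,t_0,L)$ onto $D^{\pm}_i$, each graph $s_j(s^1_0,s^2_0,t_0,L;\cdot)$ onto $s_j(\cdot)$, and the translated function in (\ref{4-15}) onto the function $\tilde v\colon\bar D\RA\RR$ of (\ref{4-9}). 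Since the Sobolev membership, the piecewise $C^1$ regularity, the pointwise bounds on $\partial_s\tilde v$ and $\partial_t\tilde v$, and the identities and estimates involving $\int_{s_1(t)}^{s}\tau^m\tilde v(\tau,t)\,d\tau$ all concern only the $s$-variable and the untranslated parameters $a,b,s^1_0,s^2_0,m$, it suffices to prove (a)--(h) for $\tilde v$ on $\bar D$, which I would do by simply quoting the relevant line of the construction in each case.

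With that reduction, each item is read off as follows: (a), (b), and (g) are exactly the three displayed lines of (\ref{4-9}); (c) is obtained by taking the first component of $\td\tilde v$ in the six cases listed in (\ref{4-11}); (d) is precisely (\ref{4-12}), which was derived by substituting the bound $|\tilde s'(t)|\le[1+(s^2_0/s^1_0)^m](s^2_0-s^1_0)/(2L)$ --- itself a consequence of the second line of (\ref{4-8}) together with $|s_1'|\equiv(s^2_0-s^1_0)/(2L)$ and the inequalities $s_1(t)\ge s^1_0$, $s_2(t)\le s^2_0$ --- into the formula (\ref{4-11}) for $\partial_t\tilde v$; (e) is (\ref{4-14}); (f) is (\ref{4-10}), valid on $[0,L]$ because $\int_{s_1(t)}^{s_2(t)}\tau^m\tilde v(\tau,t)\,d\tau=F(\tilde s(t),t)=0$ by the choice of $\tilde s$ in Lemma \ref{4-3}(b); and (h) follows from the elementary estimate $|\int_{s_1(t)}^{s}\tau^m\tilde v(\tau,t)\,d\tau|\le(s^2_0)^m(s^2_0-s^1_0)\max_{\bar D}|\tilde v|$ combined with the third line of (\ref{4-9}).

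The one point I would verify explicitly is that the even extensions along the $t$-axis (of $s_1,\tilde s_1,\tilde s,\tilde s_2,s_2$ on $[-L,L]$ and of $\tilde v$ on $\bar D$) glue consistently: on $\{t=0\}$ the one-sided limits of $\tilde v$ and of the curves agree by construction, so $\tilde v$ remains continuous on $\bar D$, stays Lipschitz with the same bounds, vanishes on all of $\partial D$ (hence lies in $W^{1,\infty}_0(D)$), and is still $C^1$ on each $\bar D^{\pm}_i$; moreover $t\mapsto\int_{s_1(t)}^{s_2(t)}\tau^m\tilde v(\tau,t)\,d\tau$ becomes an even function of $t$, so its vanishing on $[0,L]$ forces vanishing on $[-L,L]$, giving (f) there. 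I do not expect a genuine obstacle: the analytic heavy lifting --- existence, uniqueness and $C^1([0,L])$-regularity of $\tilde s$, and the derivative estimate $|\tilde s'|\le[1+(s_2/s_1)^m]|s_1'|$ --- has already been done in the Construction Lemma (Lemma \ref{4-3}), so the present proof reduces to careful collecting and relabelling of (\ref{4-9})--(\ref{4-15}).
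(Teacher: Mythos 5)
Your proposal is correct and follows exactly the route the paper itself takes: Lemma \ref{4-16} is nothing more than a relabelled summary, after $t$-translation, of the properties (\ref{4-9})--(\ref{4-14}) established in the course of the construction, and your item-by-item matching (including the check that the even extension across $t=0$ is consistent, which the paper glosses over) is precisely the intended reading.
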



\section{Proof of Theorem \ref{2-3-1}}\label{s5}
In this long and final section, we present the proof of Theorem \ref{2-3-1}; we divide the proof into several parts.

Fix an $\epsilon>0$. Let $\Phi=(v,\vp)\in \mathcal{P}^m_{\lambda,l_0}$, i.e.,
\begin{equation}\label{5-1}
\left\{ \begin{array}{l}
          \Phi\in\Phi^*+W^{1,\infty}_0(J^*_T;\RR^2), \\
          \textrm{$\Phi$ is piecewise $C^1$ in $J^*_T$, and} \\
          \td\Phi(s,t)\in K^m_{\lambda,l_0}(s,v(s,t))\cup U^m_{\lambda,l_0}(s,v(s,t))\,\,\,\textrm{ for a.e. }(s,t)\in J^*_T.
        \end{array} \right.
\end{equation}
Let $0<\eta<1$. Our goal is to construct a function $\Phi_\eta\in \mathcal{P}^m_{\lambda,l_0,\epsilon}$ such that $||\Phi-\Phi_\eta||_{L^{\infty}(J^*_T;\RR^2)}\leq\eta$, i.e., to construct a function
  $\Phi_\eta=(v_\eta,\vp_\eta)\in \Phi^*+W^{1,\infty}_0(J^*_T;\RR^2)$ satisfying that
\begin{equation}\label{5-2}
\left\{ \begin{array}{l}
          \textrm{$\Phi_\eta$ is piecewise $C^1$ in $J^*_T$,} \\
          \td\Phi_\eta(s,t)\in K^m_{\lambda,l_0}(s,v_\eta(s,t))\cup U^m_{\lambda,l_0}(s,v_\eta(s,t))\,\,\,\textrm{ for a.e. }(s,t)\in J^*_T, \\
          \int_{J^*_T}\mathrm{dist}(\td\Phi_\eta(s,t), K^m_{\lambda,l_0}(s,v_\eta(s,t)))dsdt \leq \epsilon|J^*_T|,\\
||\Phi-\Phi_\eta||_{L^{\infty}(J^*_T;\RR^2)}\leq\eta.
        \end{array} \right.
\end{equation}

\subsection{Separation of domain $J^*_T$}
By the second of (\ref{5-1}), there is a sequence $\{G_i\}_{i\in\NN}$ of disjoint open subsets of $J^*_T$ such that
\begin{equation*}
\left\{ \begin{array}{l}
          |J^*_T\setminus \cup_{i=1}^\infty G_i|=0, \\
          \Phi\in C^1(\bar{G}_i;\RR^2)\,\,\,\forall i\in\NN.
        \end{array} \right.
\end{equation*}
Fix an index $i\in\NN$ throughout this section. Since $\partial_s \vp$ and $v$ are continuous on $\bar{G}_i$, it follows from the third inclusion of (\ref{5-1}) that
$$
\partial_s\vp(s,t)=s^m v(s,t),\,\,\textrm{ i.e., }\,\,\td\Phi(s,t)\in W_{s^m v(s,t)}\,\,\,\,\forall (s,t)\in\bar{G}_i.
$$
Applying Lemma \ref{2-3-7}, we have
\begin{eqnarray*}
d_i(s,t) &:=& \mathrm{dist}(\td\Phi(s,t),K^m_{\lambda,l_0}(s,v(s,t))\cup \partial|_{W_{s^m v(s,t)}}U^m_{\lambda,l_0}(s,v(s,t)))\\
&=& \mathrm{dist}(P_W(\td\Phi(s,t)),K^m_{\lambda,l_0}(s,0)\cup \partial|_{W}U^m_{\lambda,l_0}(s,0))
\end{eqnarray*}
for every $(s,t)\in\bar{G}_i$, and it follows form Lemma \ref{2-3-8} that the mapping $d_i:\bar{G}_i\RA[0,\infty)$ is continuous. Let $0<\delta<1$, and put
$$
K_{i,\delta}:=\{(s,t)\in G_i:d_i(s,t)\leq\delta\}.
$$
Define also
\begin{equation*}
\begin{array}{l}
  K^1_{i,\delta}:=\{(s,t)\in K_{i,\delta}:\td\Phi(s,t)\not\in K^m_{\lambda,l_0}(s,v(s,t))\cup U^m_{\lambda,l_0}(s,v(s,t))\}, \\
  K^2_{i,\delta}:=\{(s,t)\in K_{i,\delta}:\td\Phi(s,t)\in K^m_{\lambda,l_0}(s,v(s,t))\}, \\
  K^3_{i,\delta}:=\{(s,t)\in K_{i,\delta}:\td\Phi(s,t)\in U^m_{\lambda,l_0}(s,v(s,t))\},\\
\end{array}
\end{equation*}
so that $K_{i,\delta}$ is the disjoint union of $K^1_{i,\delta}$, $K^2_{i,\delta}$, and $K^3_{i,\delta}$. Note that $|K^1_{i,\delta}|=0$ by the third of (\ref{5-1}), and that
\begin{equation*}
\begin{array}{c}
  K^3_{i,\delta} \subseteq
  \{(s,t)\in G_i:\mathrm{dist}(\td\Phi(s,t),K^m_{\lambda,l_0}(s,v(s,t)))\leq\delta, \td\Phi(s,t)\in U^m_{\lambda,l_0}(s,v(s,t))\} \\
  \cup \{(s,t)\in G_i:\mathrm{dist}(\td\Phi(s,t),\partial|_{W_{s^m v(s,t)}}U^m_{\lambda,l_0}(s,v(s,t)))\leq\delta, \td\Phi(s,t)\in U^m_{\lambda,l_0}(s,v(s,t))\} \\
  =:K^{3,\alpha}_{i,\delta}\cup K^{3,\beta}_{i,\delta}.
\end{array}
\end{equation*}
Hence
\begin{equation}\label{5-6}
\begin{array}{c}
  \int_{K_{i,\delta}}\mathrm{dist}(\td\Phi(s,t),K^m_{\lambda,l_0}(s,v(s,t))) \leq \int_{K^2_{i,\delta}}\mathrm{dist}(\td\Phi(s,t),K^m_{\lambda,l_0}(s,v(s,t)))  \\
 +\int_{K^{3,\alpha}_{i,\delta}}\mathrm{dist}(\td\Phi(s,t),K^m_{\lambda,l_0}(s,v(s,t)))
  +\int_{K^{3,\beta}_{i,\delta}}\mathrm{dist}(\td\Phi(s,t),K^m_{\lambda,l_0}(s,v(s,t))) \\
  =\int_{K^{3,\alpha}_{i,\delta}}\mathrm{dist}(\td\Phi(s,t),K^m_{\lambda,l_0}(s,v(s,t)))  +\int_{K^{3,\beta}_{i,\delta}}\mathrm{dist}(\td\Phi(s,t),K^m_{\lambda,l_0}(s,v(s,t))) \\
  \leq \delta|K^{3,\alpha}_{i,\delta}|+N_i|K^{3,\beta}_{i,\delta}| \leq \delta|J^*_T|+N_i|K^{3,\beta}_{i,\delta}|,
\end{array}
\end{equation}
where $N_i:=\max_{(s,t)\in\bar{G}_i} \mathrm{dist}(\td\Phi(s,t),K^m_{\lambda,l_0}(s,v(s,t)))$ is independent of $\delta$.
By the definition of $K^{3,\beta}_{i,\delta}$ $(0<\delta<1)$,
$$
K^{3,\beta}_{i,\delta_1}\subset K^{3,\beta}_{i,\delta_2}\,\,\textrm{ whenever }\,\,0<\delta_1<\delta_2<1.
$$
Let us check that
\begin{equation}\label{5-4}
\bigcap_{0<\delta<1}K^{3,\beta}_{i,\delta}=\emptyset.
\end{equation}
Suppose on the contrary that there is a point $(s,t)\in\bigcap_{0<\delta<1}K^{3,\beta}_{i,\delta}.$
Then
$$
\mathrm{dist}(\td\Phi(s,t),\partial|_{W_{s^m v(s,t)}}U^m_{\lambda,l_0}(s,v(s,t)))=0,
$$
and so
$$
\td\Phi(s,t)\in U^m_{\lambda,l_0}(s,v(s,t))\cap \partial|_{W_{s^m v(s,t)}}U^m_{\lambda,l_0}(s,v(s,t))\neq\emptyset.
$$
This is a contradiction to the fact that $U^m_{\lambda,l_0}(s,v(s,t))$ is open in $W_{s^m v(s,t)}$, and so (\ref{5-4}) holds. We thus have
$$
\delta|J^*_T|+N_i|K^{3,\beta}_{i,\delta}|\RA 0\,\,\textrm{ as }\,\,\delta\to 0^+.
$$
Note also that
$$
|\{(s,t)\in G_i:d_i(s,t)=\delta\}|>0
$$
for at most countably many $\delta\in(0,1)$. So it is possible to choose a $\delta_i\in(0,\epsilon/2)$ so that
\begin{equation}\label{5-5}
\left\{\begin{array}{l}
  \delta_i |J^*_T|+N_i|K^{3,\beta}_{i,\delta_i}|\leq\frac{\epsilon}{2^{i+1}}|J^*_T|, \\
  |\{(s,t)\in G_i:d_i(s,t)=\delta_i\}|=0.
\end{array}\right.
\end{equation}
With this choice of $\delta_i$, we define
\begin{eqnarray*}
\hat{K}_i &:=& \{(s,t)\in G_i:d_i(s,t)<\delta_i\},\\
\hat{H}_i &:=& \{(s,t)\in G_i:d_i(s,t)=\delta_i\},\\
\hat{G}_i &:=& \{(s,t)\in G_i:d_i(s,t)>\delta_i\},
\end{eqnarray*}
so that $K_{i,\delta_i}=\hat{K}_i\cup\hat{H}_i$, $|\hat{H}_i|=0$ by (\ref{5-5}), and $\hat{K}_i$ and $\hat{G}_i$ are disjoint open subsets of $G_i$ with $|G_i\setminus(\hat{K}_i\cup\hat{G}_i)|=0$ by the continuity of the mapping $d_i:\bar{G}_i\RA[0,\infty)$. By (\ref{5-6}) and (\ref{5-5}), we have
\begin{equation}\label{5-7}
\begin{array}{c}
  \int_{\hat{K_i}}\mathrm{dist}(\td\Phi(s,t),K^m_{\lambda,l_0}(s,v(s,t)))dsdt \\
  =\int_{K_{i,\delta_i}}\mathrm{dist}(\td\Phi(s,t),K^m_{\lambda,l_0}(s,v(s,t)))dsdt\leq\frac{\epsilon}{2^{i+1}}|J^*_T|.
\end{array}
\end{equation}

Let us take a moment here to explain what we have done so far. We have separated the open set $G_i$ into two disjoint open sets $\hat{K}_i$ and $\hat{G}_i$. On the set $\hat{K}_i$, the value of the integral in question is already ``small" enough to the extent (\ref{5-7}) as we wanted in the fulfillment of the third of (\ref{5-2}). So no modification will be made to $\Phi$ on the set $\hat{K}_i$. But on the set $\hat{G}_i$, the (inhomogeneous) distance from the gradient of $\Phi$ to $K^m_{\lambda,l_0}$ is relatively ``large", and therefore a necessary modification will be made to $\Phi$ by gluing suitable functions constructed in Section \ref{s4}, specifically in Lemma \ref{4-16}, so that the integral can be made ``small" enough. This is what to be accomplished in the following subsections.

\begin{figure}[ht]
\begin{center}
\includegraphics[scale=0.6]{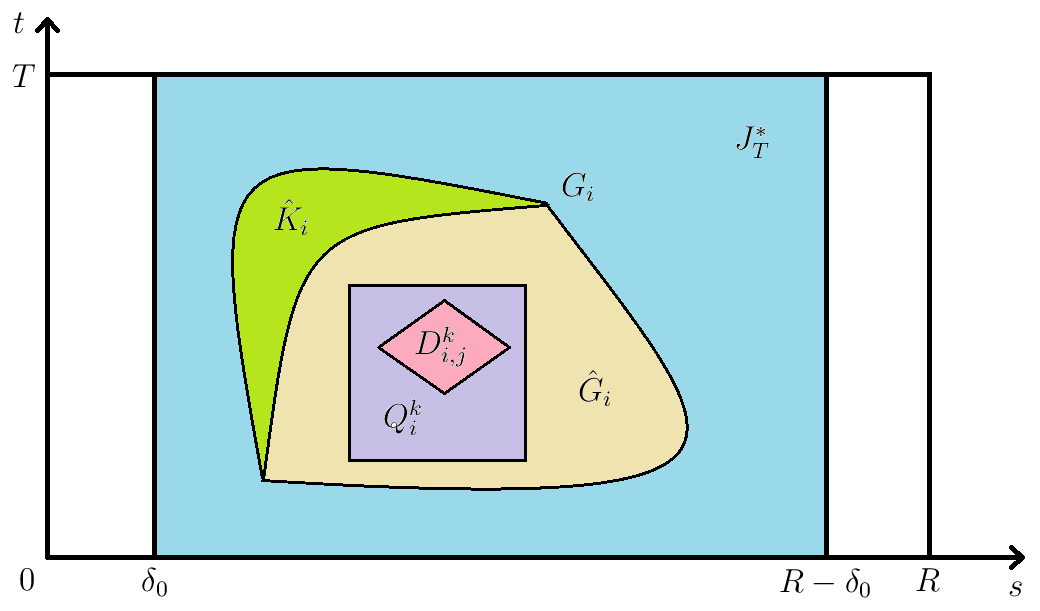}
\end{center}
\caption{Heuristic diagram of subdivisions of $J^*_T$}
\label{fig6}
\end{figure}

\subsection{Properties of the gradient of $\Phi$ in $\hat{G}_i$}
By the uniform continuity of $\td\Phi:\bar{G}_i\RA\RR^{2\times2}$, there exists an $\eta_i=\eta_i(\rho,\delta_i)>0$ such that
\begin{equation}\label{5-8}
(s,t),(s',t')\in\bar{G}_i,\,|(s,t)-(s',t')|\leq\eta_i \Rightarrow |\td\Phi(s,t)-\td\Phi(s',t')|\leq\rho\delta_i,
\end{equation}
where $\rho>0$ is a constant with
\begin{equation}\label{5-9}
\rho<\min\left\{\frac{1}{6},\frac{1}{12R^mM_\sigma}\right\}
\end{equation}
and
$$
M_\sigma:=\sup_{p_1,p_2\in[-2\lambda,2\lambda],p_1\neq p_2}\left|\frac{\sigma(p_1)-\sigma(p_2)}{p_1-p_2}\right|<\infty.
$$
Let us check that for each $(s,t)\in\hat{G}_i$,
\begin{equation}\label{5-12}
\left\{ \begin{array}{l}
          \td\Phi(s,t)\in U^m_{\lambda,l_0}(s,v(s,t)), \\
          \mathrm{dist}((\partial_s v(s,t),\partial_t\vp(s,t)),\partial\tilde{U}^m_\lambda(s))>\delta_i, \\
          |\partial_t v(s,t)|<l_0-\delta_i.
        \end{array} \right.
\end{equation}
To show this, choose any $(s,t)\in\hat{G}_i$. By the third of (\ref{5-1}), we can take a sequence $\{(s_j,t_j)\}_{j\in\NN}$ in $\hat{G}_i$ such that
\begin{equation*}
\left\{ \begin{array}{l}
          \td\Phi(s_j,t_j)\in K^m_{\lambda,l_0}(s_j,v(s_j,t_j))\cup U^m_{\lambda,l_0}(s_j,v(s_j,t_j))\,\,\,\forall j\in\NN, \\
          (s_j,t_j)\RA(s,j)\,\,\,\textrm{ in $\RR^2$ as $j\RA\infty$.}
        \end{array} \right.
\end{equation*}
So for each $j\in\NN$, we have
\begin{equation}\label{5-13}
\left(
  \begin{array}{cc}
    \partial_s v(s_j,t_j) & \partial_t v(s_j,t_j) \\
    \partial_s \vp(s_j,t_j) & \partial_t \vp(s_j,t_j) \\
  \end{array}
\right) = \left(
            \begin{array}{cc}
              p_j & l_j \\
              (s_j)^m v(s_j,t_j) & (s_j)^mq_j \\
            \end{array}
          \right)
\end{equation}
for some $(p_j,q_j)\in\tilde{K}_\lambda\cup\tilde{U}^+_\lambda\cup\tilde{U}^-_\lambda$ and some $l_j\in[-l_0,l_0]$. Passing to a subsequence (we do not relabel),
$$
(p_j,q_j)\RA(p,q)\,\,\textrm{ and }\,\, l_j\RA l\,\,\,\textrm{ as }j\RA\infty,
$$
for some $(p,q)\in\overline{\tilde{K}_\lambda\cup\tilde{U}^+_\lambda\cup\tilde{U}^-_\lambda}$ and some $l\in[-l_0,l_0]$. Letting $j\RA\infty$ on both sides of (\ref{5-13}), we obtain
\begin{equation}\label{5-14}
\left(
  \begin{array}{cc}
    \partial_s v(s,t) & \partial_t v(s,t) \\
    \partial_s \vp(s,t) & \partial_t \vp(s,t) \\
  \end{array}
\right) = \left(
            \begin{array}{cc}
              p & l \\
              s^m v(s,t) & s^mq \\
            \end{array}
          \right).
\end{equation}
Since $(s,t)\in\hat{G}_i$, it follows from the definition of $\hat{G}_i$ that
\begin{eqnarray}\label{5-15}
\delta_i<d_i(s,t) &=& \mathrm{dist}(\td\Phi(s,t),K^m_{\lambda,l_0}(s,v(s,t))\cup \partial|_{W_{s^m v(s,t)}}U^m_{\lambda,l_0}(s,v(s,t)))\nonumber\\
&=& \mathrm{dist}(P_W(\td\Phi(s,t)),K^m_{\lambda,l_0}(s,0)\cup \partial|_{W}U^m_{\lambda,l_0}(s,0)).
\end{eqnarray}
Note that $\overline{\tilde{K}_\lambda\cup\tilde{U}^+_\lambda\cup\tilde{U}^-_\lambda}$ is the disjoint union of $\tilde{K}_\lambda\cup\partial\tilde{U}^+_\lambda\cup\partial\tilde{U}^-_\lambda$ and $\tilde{U}^+_\lambda\cup\tilde{U}^-_\lambda$. So if $(p,q)\not\in\tilde{U}^+_\lambda\cup\tilde{U}^-_\lambda$, then $P_W(\td\Phi(s,t))\in K^m_{\lambda,l_0}(s,0)\cup \partial|_{W}U^m_{\lambda,l_0}(s,0)$ by (\ref{5-14}), and so $d_i(s,t)=0$. This is a contradiction to (\ref{5-15}). Thus
\begin{equation}\label{5-27}
(p,q)\in\tilde{U}^+_\lambda\cup\tilde{U}^-_\lambda.
\end{equation}
Next, suppose that $\mathrm{dist}((p,s^mq),\partial\tilde{U}^m_\lambda(s))\leq\delta_i$. Since $\partial\tilde{U}^m_\lambda(s)$ is compact, we can choose a point $(\tilde{p},\tilde{q})\in \partial\tilde{U}^+_\lambda\cup\partial\tilde{U}^-_\lambda$ so that
$$
\left|(p,s^mq)-(\tilde{p},s^m\tilde{q})\right|=\mathrm{dist}((p,s^mq),\partial\tilde{U}^m_\lambda(s))\leq\delta_i.
$$
But $$
\left(
            \begin{array}{cc}
              \tilde{p} & l \\
              0 & s^m\tilde{q} \\
            \end{array}
          \right) \in \partial|_{W}U^m_{\lambda,l_0}(s,0),
$$
and so
\begin{eqnarray*}
\delta_i &<& \mathrm{dist}(P_W(\td\Phi(s,t)),K^m_{\lambda,l_0}(s,0)\cup \partial|_{W}U^m_{\lambda,l_0}(s,0)) \\
&\leq& \mathrm{dist}(P_W(\td\Phi(s,t)),\partial|_{W}U^m_{\lambda,l_0}(s,0)) \\
&\leq& \left|P_W(\td\Phi(s,t))-\left(
            \begin{array}{cc}
              \tilde{p} & l \\
              0 & s^m\tilde{q} \\
            \end{array}
          \right) \right|=\left|(p,s^mq)-(\tilde{p},s^m\tilde{q})\right|
\end{eqnarray*}
by (\ref{5-14}) and (\ref{5-15}). This is a contradiction, and we thus have
\begin{equation}\label{5-28}
\mathrm{dist}((p,s^mq),\partial\tilde{U}^m_\lambda(s))>\delta_i.
\end{equation}
Finally, suppose $|l|\geq l_0-\delta_i.$ Assume further that $l\geq l_0-\delta_i.$ Note
$$
\left(
            \begin{array}{cc}
              p & l_0 \\
              0 & s^mq \\
            \end{array}
          \right) \in \partial|_{W}U^m_{\lambda,l_0}(s,0),
$$
and so
\begin{eqnarray*}
\mathrm{dist}(P_W(\td\Phi(s,t)),K^m_{\lambda,l_0}(s,0)\cup \partial|_{W}U^m_{\lambda,l_0}(s,0)) &\leq& \mathrm{dist}(P_W(\td\Phi(s,t)),\partial|_{W}U^m_{\lambda,l_0}(s,0)) \\
&\leq& \left|P_W(\td\Phi(s,t))-\left(
            \begin{array}{cc}
              p & l_0 \\
              0 & s^m q \\
            \end{array}
          \right) \right|\\
&=& l_0-l\leq \delta_i
\end{eqnarray*}
by (\ref{5-14}). This is a contradiction to (\ref{5-15}), and thus
$
l<l_0-\delta_i.
$
If $l\leq -(l_0-\delta_i)$, then we also have a contradiction, so that we conclude that
\begin{equation}\label{5-29}
|l|<l_0-\delta_i.
\end{equation}
Thus (\ref{5-12}) follows from (\ref{5-14}), (\ref{5-27}), (\ref{5-28}), and (\ref{5-29}).

\subsection{Local gradient modifiers in subdivisions of $\hat{G}_i$}
By the Vitali Covering Lemma, we can take a sequence $\{Q^k_i\}_{k\in\NN}$ of disjoint open squares in $\hat{G}_i$ whose sides are parallel to the axes such that
$$
\left|\hat{G}_i\setminus\bigcup_{k=1}^\infty Q^k_i\right|=0.
$$
For each $k\in\NN$, let $d^k_i>0$ denote the side length of $Q^k_i$ and $(s^k_i,t^k_i)$ the center of $Q^k_i$. Dividing these squares further if necessary, we can have
\begin{equation}\label{5-10}
d^k_i\leq\min\left\{\frac{\eta_i}{\sqrt 2},\frac{4\eta}{\sqrt 2 (\lambda-\lambda^-)},\sqrt{\frac{4\eta}{\sqrt{2}(\lambda-\lambda^-)R^m}},\frac{\delta_i}{12M_g\sigma(1)}\right\}\,\,\,\forall k\in\NN,
\end{equation}
where $M_g:=\max_{|s_1|,|s_2|\leq R} |g(s_1,s_2)|$ and $g$ is the polynomial of two variables such that
$$
(s_1)^m-(s_2)^m=(s_1-s_2)g(s_1,s_2)\,\,\,\,\,\forall s_1,s_2\in\RR.\,\,\,\textrm{(Take $g\equiv 0$ if $m=0$.)}
$$
We fix an index $k\in\NN$ in the rest of the section. If $(s,t),(s',t')\in Q^k_i$, then $|(s,t)-(s',t')|<\sqrt{2}d^k_i\leq\eta_i$ by (\ref{5-10}), and so
$$
|\td\Phi(s,t)-\td\Phi(s',t')|\leq\rho\delta_i
$$
by (\ref{5-8}). In particular,
\begin{equation}\label{5-11}
|\td\Phi(s,t)-\td\Phi(s^k_i,t^k_i)|\leq\rho\delta_i\,\,\,\forall (s,t)\in Q^k_i.
\end{equation}

\begin{figure}[ht]
\begin{center}
\includegraphics[scale=0.6]{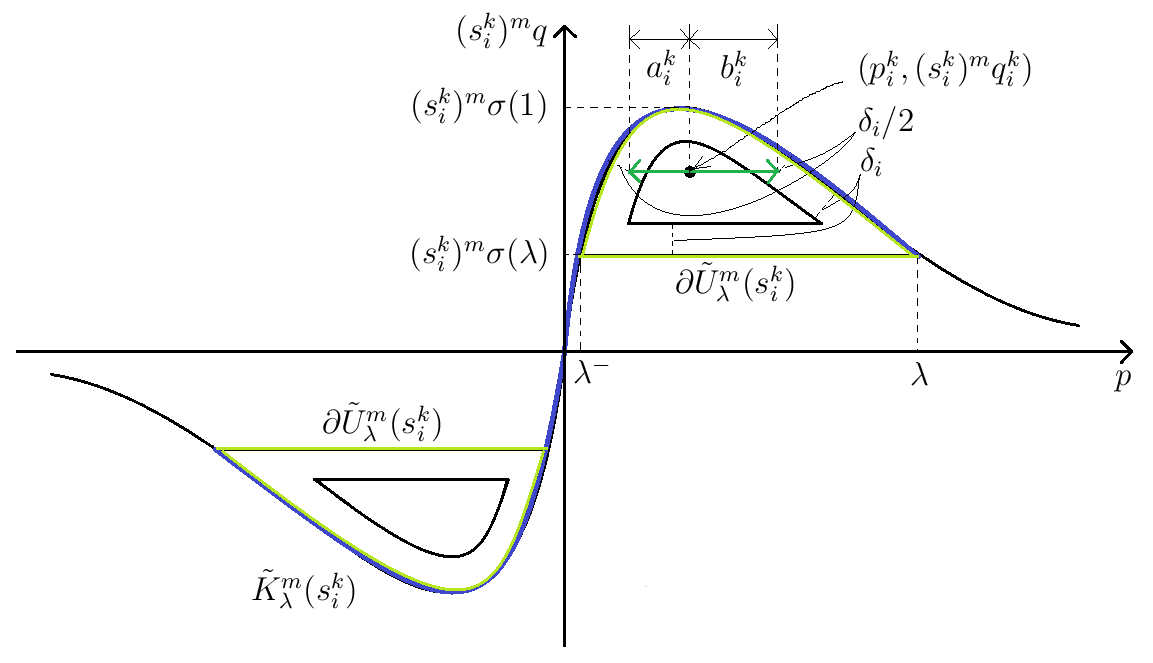}
\end{center}
\caption{A necessary local $s$-derivative change of $v$ in $Q^k_i$}
\label{fig4}
\end{figure}

Since $(s^k_i,t^k_i)\in Q^k_i\subset\hat{G}_i$, we have from (\ref{5-12}) that
\begin{equation}\label{5-34}
\left\{ \begin{array}{l}
          \td\Phi(s^k_i,t^k_i)\in U^m_{\lambda,l_0}(s^k_i,v(s^k_i,t^k_i)), \\
          \mathrm{dist}((\partial_s v(s^k_i,t^k_i),\partial_t\vp(s^k_i,t^k_i)),\partial\tilde{U}^m_\lambda(s^k_i))>\delta_i, \\
          |\partial_t v(s^k_i,t^k_i)|<l_0-\delta_i.
        \end{array} \right.
\end{equation}
So
\begin{equation}\label{5-36}
\left(
  \begin{array}{cc}
    \partial_s v(s^k_i,t^k_i) & \partial_t v(s^k_i,t^k_i) \\
    \partial_s \vp(s^k_i,t^k_i) & \partial_t \vp(s^k_i,t^k_i) \\
  \end{array}
\right) = \left(
            \begin{array}{cc}
              p^k_i & l^k_i \\
              (s^k_i)^m v(s^k_i,t^k_i) & (s^k_i)^mq^k_i \\
            \end{array}
          \right)
\end{equation}
for some $(p^k_i,q^k_i)\in \tilde{U}^+_\lambda\cup\tilde{U}^-_\lambda$ and some $l^k_i\in\RR$ with $|l^k_i|<l_0-\delta_i$. Also
\begin{equation}\label{5-37}
\mathrm{dist}((p^k_i,(s^k_i)^mq^k_i)),\partial\tilde{U}^m_\lambda(s^k_i))>\delta_i.
\end{equation}
So by the Intermediate Value Theorem, there exist two positive reals $a^k_i$ and $b^k_i$ such that
\begin{equation}\label{5-16}
\left\{ \begin{array}{l}
          \mathrm{dist}((p^k_i-a^k_i,(s^k_i)^mq^k_i),\tilde{K}^m_\lambda(s^k_i))=\mathrm{dist}((p^k_i+b^k_i,(s^k_i)^mq^k_i),\tilde{K}^m_\lambda(s^k_i))=\frac{\delta_i}{2}, \\
          (p^k_i-a^k_i,(s^k_i)^mq^k_i),(p^k_i+b^k_i,(s^k_i)^mq^k_i)\in \tilde{U}^m_\lambda(s^k_i).
        \end{array}
 \right.
\end{equation}
(See Figure \ref{fig4}.)
Observe $$
a^k_i+b^k_i<\lambda-\lambda^-.
$$
Let $\xi^k_i>0$ be a constant with
\begin{eqnarray}\label{5-17}
\xi^k_i\leq\min\left\{\frac{\delta_i}{2(\lambda-\lambda^-)\left[1+\left(\frac{R-\delta_0}{\delta_0}\right)^m\right]},\frac{\delta_i}{6(R-2\delta_0)(R-\delta_0)^m(\lambda-\lambda^-)\left[1+\left(\frac{R-\delta_0}{\delta_0}\right)^m\right]}            \right\}.
\end{eqnarray}
Define the diamond-shaped $\tilde{D}^k_i$ in $\RR^2$ as
$$
\tilde{D}^k_i:=\mathrm{int}\left(\mathrm{co}\{(0,1),(0,-1),(\xi^k_i,0),(-\xi^k_i,0)\}\right).
$$
By the Vitali Covering Lemma, there exist a sequence $\{(s^k_{i,j},t^k_{i,j})\}_{j\in\NN}$ in $Q^k_i$ and a sequence $\{\epsilon^k_{i,j}\}_{j\in\NN}$ of positive reals such that $\{(s^k_{i,j},t^k_{i,j})+\epsilon^k_{i,j} \tilde{D}^k_i\}_{j\in\NN}$ is a sequence of disjoint open subsets of $Q^k_i$ whose union has measure $|Q^k_i|$. Following the notations in (\ref{4-15}), we have
$$
(s^k_{i,j},t^k_{i,j})+\epsilon^k_{i,j} \tilde{D}^k_i=D(s^k_{i,j}-\epsilon^k_{i,j}\xi^k_{i},s^k_{i,j}+\epsilon^k_{i,j}\xi^k_{i},t^k_{i,j},\epsilon^k_{i,j})=:D^k_{i,j}\,\,\,\forall j\in\NN.
$$
Let $j\in\NN$. We also define according to the notations in (\ref{4-15}) that
\begin{equation*}
\begin{array}{rcl}
  (D^k_{i,j})^\pm_r & := & D^\pm_r(s^k_{i,j}-\epsilon^k_{i,j}\xi^k_{i},s^k_{i,j}+\epsilon^k_{i,j}\xi^k_{i},t^k_{i,j},\epsilon^k_{i,j})\,\,\,\,\,\forall r\in\{1,2,3\}, \\
  (s^k_{i,j})_r(t) & := & s_r(s^k_{i,j}-\epsilon^k_{i,j}\xi^k_{i},s^k_{i,j}+\epsilon^k_{i,j}\xi^k_{i},t^k_{i,j},\epsilon^k_{i,j};t)\,\,\,\,\,\forall t\in[t^k_{i,j}-\epsilon^k_{i,j},t^k_{i,j}+\epsilon^k_{i,j}],\,\forall r\in\{1,2\}, \\
  \tilde{v}^k_{i,j}(s,t) & := & \tilde{v}(-a^k_i,b^k_i,s^k_{i,j}-\epsilon^k_{i,j}\xi^k_{i},s^k_{i,j}+\epsilon^k_{i,j}\xi^k_{i},t^k_{i,j},\epsilon^k_{i,j};s,t)\,\,\,\,\,\forall (s,t)\in\overline{D^k_{i,j}}.
\end{array}
\end{equation*}
Then Lemma \ref{4-16} can be restated as follows in a bit more specific form:\\\\
\textbf{(a)} $\tilde{v}^k_{i,j}\in W^{1,\infty}_0(D^k_{i,j}),$\\
\textbf{(b)} $\tilde{v}^k_{i,j}\in C^1\left(\overline{(D^k_{i,j})^\pm_r}\right)\,\,\,\,\,\forall r\in\{1,2,3\},$\\
\textbf{(c)} $\partial_s\tilde{v}^k_{i,j}(s,t)=\left\{ \begin{array}{cl}
                                                         -a^k_i & \forall (s,t)\in (D^k_{i,j})^+_1\cup(D^k_{i,j})^-_1\cup(D^k_{i,j})^+_3\cup(D^k_{i,j})^-_3 \\
                                                         b^k_i & \forall (s,t)\in (D^k_{i,j})^+_2\cup(D^k_{i,j})^-_2,
                                                       \end{array}
  \right.   $\\
\textbf{(d)}
\begin{eqnarray*}
|\partial_t\tilde{v}^k_{i,j}(s,t)| &\leq& \max\{a^k_i,b^k_i\}\left[ 1+\left(\frac{s^k_{i,j}+\epsilon^k_{i,j}\xi^k_{i}}{s^k_{i,j}-\epsilon^k_{i,j}\xi^k_{i}}\right)^m\right]\frac{2\epsilon^k_{i,j}\xi^k_{i}}{2\epsilon^k_{i,j}}\\
&\leq& (\lambda-\lambda^-)\left[ 1+\left(\frac{R-\delta_0}{\delta_0}\right)^m\right]\xi^k_i\\
&\leq& \frac{\delta_i}{2}\,\,\,\,\,\forall (s,t)\in\bigcup_{r=1}^3[(D^k_{i,j})^+_r\cup (D^k_{i,j})^-_r],\,\,\,\,\, \textrm{(by (\ref{5-17}))}
\end{eqnarray*}
\textbf{(e)} $\frac{\partial}{\partial t}\left(\int^s_{(s^k_{i,j})_1(t)}\tau^m\tilde{v}^k_{i,j}(\tau,t)d\tau\right)=\int^s_{(s^k_{i,j})_1(t)}\tau^m\partial_t\tilde{v}^k_{i,j}(\tau,t)d\tau\,\,\,\,\,\forall (s,t)\in\bigcup_{r=1}^3[(D^k_{i,j})^+_r\cup (D^k_{i,j})^-_r],$\\
\textbf{(f)} $\int^{(s^k_{i,j})_2(t)}_{(s^k_{i,j})_1(t)}\tau^m\tilde{v}^k_{i,j}(\tau,t)d\tau=0\,\,\,\,\,\forall t\in[t^k_{i,j}-\epsilon^k_{i,j},t^k_{i,j}+\epsilon^k_{i,j}],$\\
\textbf{(g)} $\max_{\overline{D^k_{i,j}}}|\tilde{v}^k_{i,j}|\leq \frac{a^k+b^k_i}{4}2\epsilon^k_{i,j}\xi^k_{i}\leq \frac{\lambda-\lambda^-}{4}d^k_i\leq \frac{\eta}{\sqrt 2},\,\,\,\,\, \textrm{(by (\ref{5-10}))}$\\
\textbf{(h)}
\begin{eqnarray*}
\max_{(s,t)\in\overline{D^k_{i,j}}}\left|\int^s_{(s^k_{i,j})_1(t)}\tau^m\tilde{v}^k_{i,j}(\tau,t)d\tau\right| &\leq& \frac{a^k+b^k_i}{4}(s^k_{i,j}+\epsilon^k_{i,j}\xi^k_{i})^m(2\epsilon^k_{i,j}\xi^k_{i})^2\\
&\leq& \frac{\lambda-\lambda^-}{4}R^m(d^k_i)^2\leq \frac{\eta}{\sqrt 2}.\,\,\,\,\, \textrm{(by (\ref{5-10}))}
\end{eqnarray*}

\subsection{New function $\Phi_\eta$ from old $\Phi$}
We now define
\begin{equation*}
\tilde{v}:=\sum_{i,j,k\in\NN}\tilde{v}^k_{i,j}\chi_{D^k_{i,j}}\,\,\,\,\,\textrm{ in }J^*_T.
\end{equation*}
Note that $\forall i,j,k\in\NN$,
\begin{eqnarray*}
||\tilde{v}^k_{i,j}||_{W^{1,\infty}(D^k_{i,j})} &=& ||\tilde{v}^k_{i,j}||_{L^{\infty}(D^k_{i,j})}+||\partial_s\tilde{v}^k_{i,j}||_{L^{\infty}(D^k_{i,j})}+||\partial_t\tilde{v}^k_{i,j}||_{L^{\infty}(D^k_{i,j})}\\
&\leq& \frac{\eta}{\sqrt 2}+\max\{a^k_{i},b^k_j\}+\frac{\delta_i}{2}\,\,\,\,\,(\textrm{by (c), (d), and (g)})\\
&\leq& \frac{\eta}{\sqrt 2}+(\lambda-\lambda^-)+\frac{\epsilon}{4},\,\,\,\,\,(\textrm{by (\ref{5-5})})
\end{eqnarray*}
that is, $\sup_{i,j,k\in\NN}||\tilde{v}^k_{i,j}||_{W^{1,\infty}(D^k_{i,j})}\leq \frac{\eta}{\sqrt 2}+(\lambda-\lambda^-)+\frac{\epsilon}{4}<\infty$.
Applying the Gluing Lemma, it follows from this inequality, (a), and (b) that
\begin{equation}\label{5-19}
\left\{ \begin{array}{l}
          \tilde{v}\in W^{1,\infty}_0(J^*_T), \\
          \textrm{$\tilde{v}$ is piecewise $C^1$ in $J^*_T$.}
        \end{array}
 \right.
\end{equation}

Define
\begin{equation*}
\tilde{\vp}(s,t):=\int_{\delta_0}^s\tau^m\tilde{v}(\tau,t)d\tau\,\,\,\,\,\forall(s,t)\in \bar{J}^*_T.
\end{equation*}
It is then clear that $\tilde\vp\in W^{1,\infty}(J^*_T)$.
Also, by (f) and the definitions of $\tilde{\vp}$ and $\tilde{v}$,
\begin{equation}\label{5-21}
\tilde{\vp}(s,t)=0\,\,\,\,\,\forall (s,t)\in\bar{J}^*_T\setminus\bigcup_{i,j,k\in\NN}D^k_{i,j},
\end{equation}
and hence $\tilde\vp\equiv 0$ on $\partial J^*_T$. Thus $\tilde\vp\in W^{1,\infty}_0(J^*_T)$.

Let $r\in\{1,2,3\}$ and $(s,t)\in (D^k_{i,j})^\pm_r$. By (e) and (f),
\begin{equation}\label{5-23}
\partial_t\tilde{\vp}(s,t)=\int^s_{(s^k_{i,j})_1(t)}\tau^m\partial_t\tilde{v}^k_{i,j}(\tau,t)d\tau.
\end{equation}
So it is easily deduced from (b) that
$$
\partial_t\tilde{\vp}\in C^0\left(\overline{(D^k_{i,j})^\pm_r}\right).
$$
By the definition of $\tilde{\vp}$,
\begin{equation}\label{5-24}
\partial_s\tilde{\vp}(s,t)=s^m\tilde{v}(s,t)\,\,\,\,\,\forall (s,t)\in J^*_T.
\end{equation}
Since $\tilde{v}\in C^0(\bar{J}^*_T)$, we have $\partial_s\tilde{\vp}\in C^0(\bar{J}^*_T)$. In particular,
$$
\partial_s\tilde{\vp}\in C^0\left(\overline{(D^k_{i,j})^\pm_r}\right),
$$
so that
\begin{equation}\label{5-25}
\tilde{\vp}\in C^1\left(\overline{(D^k_{i,j})^\pm_r}\right).
\end{equation}
Thus
\begin{equation}\label{5-26}
\left\{ \begin{array}{l}
          \tilde{\vp}\in W^{1,\infty}_0(J^*_T), \\
          \textrm{$\tilde{\vp}$ is piecewise $C^1$ in $J^*_T$.}
        \end{array}
 \right.
\end{equation}
Finally, we define
\begin{equation*}
v_\eta:=v+\tilde{v},\,\,\vp_\eta:=\vp+\tilde{\vp},\,\,\textrm{and}\,\,\Phi_\eta:=(v_\eta,\vp_\eta)\,\,\,\,\textrm{ in $J^*_T$.}
\end{equation*}

\subsection{Completion of Proof of Theorem \ref{2-3-1}}
To finish the proof of the density theorem, Theorem \ref{2-3-1}, we will show that the function $\Phi_\eta$ defined above belongs to $\Phi^*+W^{1,\infty}_0(J^*_T;\RR^2)$ and satisfies all of (\ref{5-2}).

 First, it follows from (\ref{5-1}), (\ref{5-19}), and (\ref{5-26}) that
\begin{equation*}
\left\{ \begin{array}{l}
          \Phi_\eta\in\Phi^*+W^{1,\infty}_0(J^*_T;\RR^2), \\
          \textrm{$\Phi_\eta$ is piecewise $C^1$ in $J^*_T$.}
        \end{array}
 \right.
\end{equation*}
It remains to verify the rest of (\ref{5-2}).   \\

\underline{\textbf{The fourth of (\ref{5-2}):}} Note
$$
||\Phi-\Phi_\eta||_{L^\infty(J^*_T;\RR^2)}=\sup_{i,j,k\in\NN}||(\tilde{v},\tilde\vp)||_{L^\infty(D^k_{i,j};\RR^2)}\leq\sup_{i,j,k\in\NN}\left(||\tilde{v}||^2_{L^\infty(D^k_{i,j})}+||\tilde{\vp}||^2_{L^\infty(D^k_{i,j})}\right)^{1/2},
$$
since $\tilde{v}=\tilde{\vp}=0$ on $\bar{J}^*_T\setminus\bigcup_{i,j,k\in\NN}D^k_{i,j}$. But for every $(i,j,k)\in\NN^3$,
$$
||\tilde{v}||_{L^\infty(D^k_{i,j})}=||\tilde{v}^k_{i,j}||_{L^\infty(D^k_{i,j})}\leq\frac{\eta}{\sqrt2}\,\,\,\,\,(\textrm{by (g))}
$$
and
$$
||\tilde{\vp}||_{L^\infty(D^k_{i,j})}\leq\frac{\eta}{\sqrt2}.\,\,\,\,\,(\textrm{by (f) and (h))}
$$
Thus
$$
||\Phi-\Phi_\eta||_{L^\infty(J^*_T;\RR^2)}\leq\eta.
$$

\underline{\textbf{The second of (\ref{5-2}):}} By the third of (\ref{5-1}) and (\ref{5-24}),
\begin{eqnarray}\label{5-30}
\partial_s\vp_\eta(s,t) &=& \partial_s(\vp(s,t)+\tilde{\vp}(s,t))\nonumber\\
&=& s^m v(s,t)+s^m\tilde{v}(s,t)\\
&=& s^m v_\eta(s,t)\nonumber
\end{eqnarray}
for a.e. $(s,t)\in J^*_T$. Since $\Phi_\eta\equiv\Phi$ on $J^*_T\setminus\bigcup_{i,j,k\in\NN} D^k_{i,j}$, it follows from the third of (\ref{5-1}) that
$$
\td\Phi_\eta(s,t)=\td\Phi(s,t)\in K^m_{\lambda,l_0}(s,v(s,t))\cup U^m_{\lambda,l_0}(s,v(s,t))= K^m_{\lambda,l_0}(s,v_\eta(s,t))\cup U^m_{\lambda,l_0}(s,v_\eta(s,t))
$$
for a.e. $(s,t)\in J^*_T\setminus\bigcup_{i,j,k\in\NN} D^k_{i,j}$. Let $i,j,k\in\NN$. To finish the proof of this part, it now suffices to show that
\begin{equation}\label{5-31}
\td\Phi_\eta(s,t)\in U^m_{\lambda,l_0}(s,v_\eta(s,t))\,\,\,\,\,\textrm{ for a.e. }(s,t)\in D^k_{i,j}.
\end{equation}
To this end, we will show that for a.e. $(s,t)\in D^k_{i,j}$, we have
\begin{equation}\label{5-32}
|\partial_t v_\eta(s,t)|<l_0
\end{equation}
and
\begin{equation}\label{5-33}
\left\{ \begin{array}{l}
          \partial_s v_\eta(s,t)\in(-\lambda,-\lambda^-)\cup(\lambda^-,\lambda), \\
          \partial_t \vp_\eta(s,t)\in I^m_\lambda(s,\partial_s v_\eta(s,t)).
        \end{array}
 \right.
\end{equation}
Then combining (\ref{5-30}), (\ref{5-32}), and (\ref{5-33}) and appealing to Lemma \ref{2-3-9}, we obtain (\ref{5-31}).

Since $D^k_{i,j}\subset Q^k_i$, it follows from (\ref{5-11}) that for each $(s,t)\in D^k_{i,j}$,
$$
|\partial_t v(s,t)-\partial_t v(s^k_i,t^k_i)|\leq|\td\Phi(s,t)-\td\Phi(s^k_i,t^k_i)|\leq\rho\delta_i.
$$
But $|\partial_t v(s^k_i,t^k_i)|<l_0-\delta_i$ by the third of (\ref{5-34}). Observe also that for a.e. $(s,t)\in D^k_{i,j}$,
$$
|\partial_t\tilde{v}(s,t)|=|\partial_t\tilde{v}^k_{i,j}(s,t)|\leq\frac{\delta_i}{2}.\,\,\,\,\,\textrm{ (by (d))}
$$
Thus for a.e. $(s,t)\in D^k_{i,j}$,
\begin{eqnarray*}
|\partial_t v_\eta(s,t)| &=& |\partial_t v(s,t)+\partial_t \tilde{v}(s,t)|\\
&\leq& |\partial_t v(s,t)-\partial_t v(s^k_i,t^k_i)|+|\partial_t v(s^k_i,t^k_i)|+|\partial_t \tilde{v}(s,t)|\\
&<& \rho\delta_i+l_0-\delta_i+\frac{\delta_i}{2}<l_0-\frac{\delta_i}{3}<l_0,\,\,\,\,\,\textrm{ (by (\ref{5-9}))}
\end{eqnarray*}
and hence (\ref{5-32}) holds.

As above for each $(s,t)\in D^k_{i,j}$,
\begin{eqnarray}\label{5-35}
\rho\delta_i &\geq& |\td\Phi(s,t)-\td\Phi(s^k_i,t^k_i)|\nonumber\\
&\geq& |(\partial_s v(s,t),\partial_t \vp(s,t))-(\partial_s v(s^k_i,t^k_i),\partial_t \vp(s^k_i,t^k_i))|\nonumber\\
&=& |(\partial_s v(s,t),\partial_t \vp(s,t))-(p^k_i,(s^k_i)^mq^k_i)|\,\,\,\,\,\textrm{ (by (\ref{5-36}))}\nonumber\\
&\geq& \max\{|\partial_s v(s,t)-p^k_i|,|\partial_t \vp(s,t)-(s^k_i)^mq^k_i|\},
\end{eqnarray}
where $(p^k_i,q^k_i)\in\tilde{U}^+_\lambda\cup\tilde{U}^-_\lambda$. Let us assume that $(p^k_i,q^k_i)\in\tilde{U}^+_\lambda$. (The other case that $(p^k_i,q^k_i)\in\tilde{U}^-_\lambda$ can be shown in the same way.) We have to show that for a.e. $(s,t)\in D^k_{i,j}$,
\begin{equation}\label{5-38}
\left\{ \begin{array}{l}
          \lambda^-<\partial_s v_\eta(s,t)=\partial_s v(s,t)+\partial_s \tilde{v}(s,t)<\lambda, \\
          \partial_t \vp_\eta(s,t)\in I^m_\lambda(s,\partial_s v_\eta(s,t)), \\
          \textrm{or equivalently } s^m\sigma(\lambda)<\partial_t\vp(s,t)+\partial_t\tilde{\vp}(s,t)<s^m\sigma(\partial_s v(s,t)+\partial_s\tilde{v}(s,t)).
        \end{array}
 \right.
\end{equation}

\underline{\textbf{Case 1:}} Assume $(s,t)\in (D^k_{i,j})^+_1\cup(D^k_{i,j})^-_1\cup(D^k_{i,j})^+_3\cup(D^k_{i,j})^-_3$.\\
In this case, we have
$$
\partial_s \tilde{v}^k_{i,j}(s,t)=-a^k_i
$$
by (c). Let $0<p^{k,-}_i<1<p^{k,+}_i$ be such that
$$
\sigma(p^{k,\pm}_i)=q^k_i,
$$
so that $(s^k_i)^m\sigma(p^{k,\pm}_i)=(s^k_i)^mq^k_i$. Then by (\ref{5-16}),
$$
p^{k,-}_i+\frac{\delta_i}{3}<p^k_i-a^k_i<p^k_i<p^k_i+b^k_i<p^{k,+}_i-\frac{\delta_i}{3}.
$$
(See Figure \ref{fig4}.) Also by (\ref{5-9}) and (\ref{5-35}),
$$
-\frac{\delta_i}{6}<\partial_s v(s,t)-p^k_i<\frac{\delta_i}{6}.
$$
Thus
$$
\lambda^-<p^{k,-}_i<p^k_i-\frac{\delta_i}{3}-a^k_i<\partial_s v(s,t)+\frac{\delta_i}{6}-\frac{\delta_i}{3}-a^k_i
$$
$$
<\partial_s v(s,t)-a^k_i=\partial_s v(s,t)+\partial_s \tilde{v}^k_{i,j}(s,t)=\partial_s v_\eta(s,t)
$$
$$
<\partial_s v(s,t)-\frac{\delta_i}{6}+b^k_i+\frac{\delta_i}{3}<p^k_i+b^k_i+\frac{\delta_i}{3}<p^{k,+}_i<\lambda,
$$
that is,
\begin{equation}\label{5-39}
\lambda^-<\partial_s v_\eta(s,t)<\lambda.
\end{equation}
Next, note from (\ref{5-17}), (\ref{5-23}), and (d) that
\begin{equation}\label{5-40}
|\partial_t\tilde{\vp}(s,t)|\leq (R-2\delta_0)(R-\delta_0)^m(\lambda-\lambda^-)\left[1+\left(\frac{R-\delta_0}{\delta_0}\right)^m\right]\xi^k_i\leq\frac{\delta_i}{6}.
\end{equation}
By (\ref{5-16}),
\begin{equation}\label{5-41}
(s^k_i)^mq^k_i\leq (s^k_i)^m\sigma(p^k_i-a^k_i)-\frac{\delta_i}{2}.
\end{equation}
(See Figure \ref{fig4}.)
But
\begin{equation*}
\begin{array}{c}
  |(s^k_i)^m\sigma(p^k_i-a^k_i)-s^m\sigma(\partial_s v(s,t)-a^k_i)| \\
  =|((s^k_i)^m-s^m)\sigma(p^k_i-a^k_i)+s^m(\sigma(p^k_i-a^k_i)-\sigma(\partial_s v(s,t)-a^k_i))| \\
  \leq |s^k_i-s||g(s^k_i,s)|\sigma(1)+R^mM_\sigma|p^k_i-\partial_s v(s,t)| \\
  \leq M_g\sigma(1)d^k_i+R^mM_\sigma\rho\delta_i\,\,\,\,\,\,\textrm{(by (\ref{5-35}))} \\
  <\frac{\delta_i}{6}.\,\,\,\,\,\,\textrm{(by (\ref{5-9}) and (\ref{5-10}))}
\end{array}
\end{equation*}
Combining this with (\ref{5-41}), we get
\begin{equation}\label{5-42}
(s^k_i)^mq^k_i<s^m\sigma(\partial_s v(s,t)-a^k_i)-\frac{\delta_i}{3}.
\end{equation}
So
\begin{eqnarray}\label{5-43}
\partial_t\vp(s,t)+\partial_t\tilde{\vp}(s,t) &\leq& (s^k_i)^mq^k_i+\rho\delta_i+|\partial_t\tilde{\vp}(s,t)|\,\,\,\,\,\,\textrm{(by (\ref{5-35}))}\nonumber\\
&<& s^m\sigma(\partial_s v(s,t)-a^k_i)-\frac{\delta_i}{3}+\frac{\delta_i}{6}+\frac{\delta_i}{6}\,\,\,\,\,\,\textrm{(by (\ref{5-9}), (\ref{5-40}), and (\ref{5-42}))}\nonumber\\
&=& s^m\sigma(\partial_s v(s,t)+\partial_s \tilde{v}(s,t)).
\end{eqnarray}
Note (See Figure \ref{fig4}.)
\begin{equation}\label{5-44}
(s^k_i)^mq^k_i>(s^k_i)^m\sigma(\lambda)+\delta_i\,\,\,\,\,\,\textrm{(by (\ref{5-37}))}
\end{equation}
and
\begin{equation}\label{5-45}
|s^m\sigma(\lambda)-(s^k_i)^m\sigma(\lambda)|\leq M_g\sigma(1)d^k_i\leq\frac{\delta_i}{12}.\,\,\,\,\,\,\textrm{(by (\ref{5-10}))}
\end{equation}
So
\begin{eqnarray}\label{5-46}
\partial_t\vp(s,t)+\partial_t\tilde{\vp}(s,t) &\geq& (s^k_i)^mq^k_i-\rho\delta_i-|\partial_t\tilde{\vp}(s,t)|\,\,\,\,\,\,\textrm{(by (\ref{5-35}))}\nonumber\\
&\geq& (s^k_i)^m\sigma(\lambda)+\delta_i-\frac{\delta_i}{6}-\frac{\delta_i}{6}\,\,\,\,\,\,\textrm{(by (\ref{5-9}), (\ref{5-40}), and (\ref{5-44}))}\nonumber\\
&\geq& s^m\sigma(\lambda)-\frac{\delta_i}{12}+\delta_i-\frac{\delta_i}{6}-\frac{\delta_i}{6}\,\,\,\,\,\,\textrm{(by (\ref{5-45}))}\nonumber\\
&>& s^m\sigma(\lambda).
\end{eqnarray}
Combining (\ref{5-39}), (\ref{5-43}), and (\ref{5-46}), we have (\ref{5-38}) whenever $(s,t)\in (D^k_{i,j})^+_1\cup(D^k_{i,j})^-_1\cup(D^k_{i,j})^+_3\cup(D^k_{i,j})^-_3$.

\underline{\textbf{Case 2:}} (\ref{5-38}) also holds whenever $(s,t)\in (D^k_{i,j})^+_2\cup(D^k_{i,j})^-_2$. To show this, we just follow the lines of Case 1 with minor modifications whenever it is necessary. We skip the details.\\
We conclude from Cases 1 and 2 that (\ref{5-38}) holds for a.e. $(s,t)\in D^k_{i,j}$.

\underline{\textbf{The third of (\ref{5-2}):}} Observe
$$
\int_{J^*_T}\mathrm{dist}(\td\Phi_\eta(s,t), K^m_{\lambda,l_0}(s,v_\eta(s,t)))dsdt = \sum_{i=1}^\infty\int_{\hat{K}_i}\mathrm{dist}(\td\Phi(s,t), K^m_{\lambda,l_0}(s,v(s,t)))dsdt
$$
$$
+\sum_{i,j,k\in\NN}^\infty\int_{D^k_{i,j}}\mathrm{dist}(\td\Phi(s,t)+\td\tilde{\Phi}(s,t), K^m_{\lambda,l_0}(s,v(s,t)+\tilde{v}(s,t)))dsdt=:A+B,
$$
where $\tilde{\Phi}:=(\tilde{v},\tilde{\vp}).$
By (\ref{5-7}), we have $A\leq\frac{\epsilon}{2}|J^*_T|$. Let $i,j,k\in\NN$, and let $(s,t)\in\bigcup_{r=1}^3[(D^k_{i
,j})^+_r\cup (D^k_{i,j})^-_r]$ be any point at which (\ref{5-31}) holds. Then by Lemma \ref{2-3-6},
$$
\mathrm{dist}(\td\Phi(s,t)+\td\tilde{\Phi}(s,t),K^m_{\lambda,l_0}(s,v(s,t)+\tilde{v}(s,t)))
$$
$$
=\mathrm{dist}((\partial_s v(s,t)+\partial_s\tilde{v}(s,t),\partial_t \vp(s,t)+\partial_t\tilde{\vp}(s,t))    ,\tilde{K}^m_{\lambda}(s)).
$$
We assume further that $(s,t)\in(D^k_{i,j})^+_1\cup(D^k_{i,j})^-_1\cup(D^k_{i,j})^+_3\cup(D^k_{i,j})^+_3$, so that $\partial_s\tilde{v}(s,t)=-a^k_i$. Choose any $(p,q)\in\tilde{K}_\lambda$. Then
$$
|(\partial_s v(s,t)-a^k_i,\partial_t \vp(s,t)+\partial_t\tilde{\vp}(s,t))-(p,s^mq)|
$$
$$
=|(\partial_s v(s,t),\partial_t \vp(s,t))-(\partial_s v(s^k_i,t^k_i),\partial_t \vp(s^k_i,t^k_i))+(\partial_s v(s^k_i,t^k_i),\partial_t \vp(s^k_i,t^k_i))
$$
$$
+(-a^k_i,0)+(0,\partial_t\tilde{\vp}(s,t))-(p,s^mq)-(p,(s^k_i)^mq)+(p,(s^k_i)^mq)|
$$
$$
\leq\rho\delta_i+|(p^k_i-a^k_i,(s^k_i)^mq^k_i)-(p,(s^k_i)^mq)|
$$
$$
+|\partial_t\tilde{\vp}(s,t)|+|q||s^m-(s^k_i)^m|
$$
$$
\leq\frac{\delta_i}{6}+\frac{\delta_i}{6}+\frac{\delta_i}{12}+|(p^k_i-a^k_i,(s^k_i)^mq^k_i)-(p,(s^k_i)^mq)|
$$
as in the verification for the second of (\ref{5-2}). Taking an infimum on $(p,q)\in\tilde{K}_\lambda$ for the far-left and -right terms of the inequalities, we have
$$
\mathrm{dist}((\partial_s v(s,t)+\partial_s\tilde{v}(s,t),\partial_t \vp(s,t)+\partial_t\tilde{\vp}(s,t))    ,\tilde{K}^m_{\lambda}(s))
$$
$$
\leq \frac{5\delta_i}{12}+\mathrm{dist}((p^k_i-a^k_i,(s^k_i)^mq^k_i),\tilde{K}^m_\lambda(s^k_i))=\frac{5\delta_i}{12}+ \frac{\delta_i}{2}<\delta_i<\frac{\epsilon}{2}
$$
by (\ref{5-5}) and (\ref{5-16}). We  can get the same result when $(s,t)\in(D^k_{i,j})^+_2\cup(D^k_{i,j})^-_2$, but we omit the details. We now have
$$
\mathrm{dist}((\partial_s v(s,t)+\partial_s\tilde{v}(s,t),\partial_t \vp(s,t)+\partial_t\tilde{\vp}(s,t))    ,\tilde{K}^m_{\lambda}(s))\leq\frac{\epsilon}{2}\,\,\,\,\,\,\textrm{ for a.e. }(s,t)\in D^k_{i,j}.
$$
So we obtain $B\leq\sum_{i,j,k\in\NN}\frac{\epsilon}{2}|D^k_{i,j}|\leq\frac{\epsilon}{2}|J^*_T|.$ Thus $A+B\leq\epsilon|J^*_T|$.

The theorem is finally proved.


\addcontentsline{toc}{section}{References}

\bibliographystyle{plain}

\bibliography{paper}

\end{document}